\documentclass[reqno]{amsart}


\usepackage{amsmath,amsthm,amsfonts, amssymb,amscd}
\usepackage[all]{xy}
\numberwithin{equation}{section}

\newtheorem{theorem}{Theorem}[section]
\newtheorem{lemma}[theorem]{Lemma}
\newtheorem{definition}[theorem]{Definition}
\newtheorem{example}[theorem]{Example}
\newtheorem{remark}[theorem]{Remark}
\newtheorem{corollary}[theorem]{Corollary}
\newtheorem{proposition}[theorem]{Proposition}

\newcommand{\lex}{\,\overrightarrow{\times}\,}

\newcommand{\Ker}{\mbox{\rm Ker}}

\newcommand{\Con}{\mbox{\rm Con}}
\newcommand{\blex}{\,\overleftarrow{\times}\,}

\begin{document}
\title[Riesz Space-Valued States]{Riesz Space-Valued States on Pseudo MV-algebras}
\author[Anatolij Dvure\v{c}enskij]{Anatolij Dvure\v{c}enskij$^{1,2}$}
\date{}%
\maketitle
\begin{center}  \footnote{Keywords: MV-algebra, pseudo MV-algebra, state, state-morphism, unital Riesz space, $(R,1_R)$-state, extremal $(R,1_R)$-state, $(R,1_R)$-state-morphism, $R$-Jordan signed measure, Choquet simplex, Bauer simplex

 AMS classification: 06D35, 06C15

The paper has been supported by grant of the Slovak Research and Development Agency under contract APVV-16-0073, by the grant VEGA No. 2/0069/16 SAV
and GA\v{C}R 15-15286S. }
Mathematical Institute,  Slovak Academy of Sciences\\
\v Stef\'anikova 49, SK-814 73 Bratislava, Slovakia\\
$^2$ Depart. Algebra  Geom.,  Palack\'{y} Univer.\\
17. listopadu 12, CZ-771 46 Olomouc, Czech Republic\\

E-mail: {\tt
dvurecen@mat.savba.sk}
\end{center}

\begin{abstract}
We introduce Riesz space-valued states, called $(R,1_R)$-states, on a pseudo MV-algebra, where $R$ is a Riesz space with a fixed strong unit $1_R$.  Pseudo MV-algebras are a non-commutative generalization of MV-algebras. Such a Riesz space-valued state is a generalization of usual states on MV-algebras. Any $(R,1_R)$-state is an additive mapping preserving a partial addition in pseudo MV-algebras. Besides we introduce $(R,1_R)$-state-morphisms and extremal $(R,1_R)$-states, and we study relations between them. We study metrical completion of unital $\ell$-groups with respect to an $(R,1_R)$-state. If the unital Riesz space is Dedekind complete, we study when the space of $(R,1_R)$-states is a Choquet simplex or even a Bauer simplex.
\end{abstract}

\section{Introduction}

The notion of a state is a basic one in the theory of quantum structures, see e.g. \cite{DvPu}. It is an analogue of a finitely additive probability measure. MV-algebras as well as its non-commutative generalization, pseudo MV-algebras, introduced in \cite{GeIo,Rac}, form an important subclass of quantum structures. Mundici defined a notion of a state on an MV-algebra in \cite{Mun2} as averaging the truth-value in \L ukasiewicz logic. States on MV-algebras are studied very intensively last 10--15 years when many important results as an integral representation of states by regular $\sigma$-additive probability measures, \cite{Kro,Pan}, or the MV-algebraic approach to de Finetti's notion of coherence, see \cite{KuMu}, have been established. Some applications of states on MV-algebras can be found in \cite{RiMu}.

In the last period, the so-called Riesz MV-algebras have been studied in the frames of MV-algebras, see \cite{DiLe}. A prototypical example of Riesz MV-algebras is an interval in a unital Riesz space, when we use Mundici's representation functor $\Gamma$, see \cite{Mun1,CDM}. The converse is also true: For any Riesz MV-algebra $M$, there is a unital Riesz space $(R,1_R)$  such that $M \cong \Gamma(R,1_R)$, \cite[Thm 3]{DiLe}. Whereas MV-algebras are algebraic semantic of the  \L ukasiewicz logic, \cite{Cha}, Riesz MV-algebras are an extension of the \L ukasiewicz logic: The propositional calculus that has Riesz MV-algebras as models is a conservative
extension of \L ukasiewicz infinite-valued propositional calculus, \cite{DiLe}. Moreover, these structures have also several applications, among which we mention artificial neural networks, image compression, game
theory, etc., see \cite{ADG, KrMa}. Fuzzy logics with noncommutative
conjunctions inspired by pseudo MV-algebras were studied in \cite{Haj}.

For more information about MV-algebras, see \cite{CDM} and about states on MV-algebras, see \cite{Mun3}, and for the most fresh survey on states on MV-algebras, see \cite{FlKr}.

States on pseudo MV-algebras have been studied in \cite{DvuS}. For pseudo MV-algebras there is a basic representation by unital $\ell$-groups not necessarily Abelian, \cite{Dvu1}, which generalizes Mundici's representation of MV-algebras, see \cite{Mun2}.  A state on a pseudo MV-algebra is defined as an additive functional with non-negative real values which at the top element of the MV-algebra attains the value 1. Whereas every MV-algebra (with $0\ne 1$) admits at least one value, this is not a case for pseudo MV-algebras, because as it was shown in \cite{DvuS}, there are stateless pseudo MV-algebras. Moreover, a pseudo MV-algebra admits at least one state if and only if the pseudo MV-algebra has at least one normal ideal that is also maximal. Therefore, every linearly ordered pseudo MV-algebras, representable pseudo MV-algebras or normal valued ones have at least one state.

Riesz space-valued states on MV-algebras have been firstly introduced in \cite{BDV} under the name generalized states. In the present paper, we introduce an $(R,1_R)$-state on a pseudo MV-algebra as a mapping which attains values in the interval $[0,1_R]$ of the unital Riesz space $(R,1_R)$, where $R$ is a Riesz space and $1_R$ is a fixed strong unit of $R$, Section 3. Besides we introduce extremal $(R,1_R)$-states and $(R,1_R)$-state-morphisms as homomorphisms of pseudo MV-algebras into the interval $[0,1_R]$. We show relations between them and we discuss when the latter two kinds of $(R,1_R)$-states coincide and when not. Whereas according to \cite{DvuS}, there is a one-to-one correspondence among extremal states, state-morphisms and maximal ideals that are normal, respectively,  we show that for $(R,1_R)$-states this is not a case, in general. We will study cases when $(R,1_R)$ is an Archimedean unital Riesz space or even a Dedekind complete unital Riesz space. In Section 4, we present metrical completion of a unital $\ell$-group by an $(R,1_R)$-state. In Section 5, we introduce also $R$-measures and $R$-Jordan signed measures and we study situations when the $(R,1_R)$-state space is a simplex, or a Choquet simplex or even a Bauer simplex and when every $(R,1_R)$-state lies in the weak closure of the convex hull of extremal $(R,1_R)$-states.

The paper is endowed with a couple of illustrating examples.

\section{Pseudo MV-algebras and Riesz Spaces}

In the present section we gather basic notions and results on pseudo MV-algebras and Riesz spaces.

Pseudo MV-algebras as a non-commutative generalization of MV-algebras were defined independently in \cite{GeIo} as pseudo MV-algebras and in \cite{Rac} as generalized MV-algebras.

\begin{definition}\label{de:2.1}     
A {\it pseudo MV-algebra} is an algebra $(M;\oplus,^-,^\sim,0,1)$ of type $(2,1,1,$ $0,0)$ such that the following axioms hold for all $x,y,z \in M$ with an additional binary operation $\odot$ defined via $$ y \odot x =(x^- \oplus y^-) ^\sim $$
\begin{enumerate}
\item[{\rm (A1)}]  $x \oplus (y \oplus z) = (x \oplus y) \oplus z$;

\item[{\rm (A2)}] $x\oplus 0 = 0 \oplus x = x$;

\item[{\rm (A3)}] $x \oplus 1 = 1 \oplus x = 1$;

\item[{\rm (A4)}] $1^\sim = 0;$ $1^- = 0$;

\item[{\rm (A5)}] $(x^- \oplus y^-)^\sim = (x^\sim \oplus y^\sim)^-$;

\item[{\rm (A6)}] $x \oplus x^\sim \odot y = y \oplus y^\sim
\odot x = x \odot y^- \oplus y = y \odot x^- \oplus x$;

\item[{\rm (A7)}] $x \odot (x^- \oplus y) = (x \oplus y^\sim)
\odot y$;

\item[{\rm (A8)}] $(x^-)^\sim= x$.
\end{enumerate}
\end{definition}

We shall assume that $0 \ne 1$.
If we define $x \le y$ iff $x^- \oplus y=1$, then $\le$ is a
partial order such that $M$ is a distributive lattice with $x
\vee y = x \oplus (x^\sim \odot y)$ and $x \wedge y =  x \odot
(x^- \oplus y)$. We recall that a pseudo MV-algebra is an MV-algebra iff $\oplus$ is a commutative binary operation. As usually, we assume that $\odot$ has higher binding priority than $\wedge$ and $\oplus$, and $\oplus$ is higher than $\vee$.

A non-void subset $I$ of $M$ is an {\it ideal} of $M$ if (i) $a\le b\in I$ implies $a \in I$, and (ii) if $a,b \in I$, then $a\oplus b \in I$. The sets $M$ and $\{0\}$ are ideals of $M$. An ideal $I\ne M$ of $M$ is {\it maximal} if it is not a proper subset of any proper ideal of $M$. An ideal $I$ of $M$ is {\it normal} if $a\oplus I:=\{a\oplus b\colon b \in I\}= \{c \oplus a\colon c\in I\}=: I\oplus a$ for any $a\in M$.
For basic properties of pseudo MV-algebras see \cite{GeIo}.

Pseudo MV-algebras are intimately connected with $\ell$-groups. We remind that a {\it po-group} is a group $(G;+,-,0)$ written additively endowed with a partial order $\le$ such that, for $g,h \in G$ with $g\le h$ we have $a+g+b\le a+h+b$ for all $a,b \in G$. If the partial order $\le$ is a lattice order, $G$ is said to be an $\ell$-{\it group}. The {\it positive cone} of a po-group $G$ is the set $G^+=\{g\in G\colon 0\le g\}$. A po-group $G$ satisfies {\it interpolation} if, for $x_1,x_2,y_1,y_2\in G$ with $x_1,x_2\le y_1,y_2$, there is an element $z\in G$ such that $x_1,x_2\le z\le y_1,y_2$.
An element $u\ge 0$ is a {\it strong unit} of $G$ if, given $g\in G$, there is an integer $n\ge 1$ such that $g\le nu$. A couple $(G,u)$, where $G$ is an $\ell$-group and $u$ is a fixed strong unit of $G$, is said to be a {\it unital} $\ell$-{\it group}. An $\ell$-group $G$ is (i) {\it Archimedean} if, for $a,b\in G$, $na\le b$ for each integer $n\ge 1$ implies $a\le 0$, (ii) {\it Dedekind} $\sigma$-{\it complete} if any  sequence $\{g_n\}$ of elements of $G$ that is bounded from above by some element $g_0\in G$ has supremum $\bigvee_n g_n \in G$, and (iii) {\it Dedekind complete} if any family $\{g_t: t \in T\}$ of elements of $G$ which is bounded from above by some element $g_0\in G$ has supremum $\bigvee_{t\in T} g_t \in G$. An $\ell$-{\it ideal} of an $\ell$-group $G$ is any $\ell$-subgroup $P$ of $G$ such that $a\in P$ and $|a|\le |b|$ yield $a \in P$. Here  $|g|= g^+ + g^-$, $g^+= g\vee 0$ and $g^-=-(g\wedge 0)$ for each $g \in G$. For non-explained notions about $\ell$-groups, please, consult e.g. \cite{Fuc, Gla}.

A prototypical example of pseudo MV-algebras is from $\ell$-groups:
If $u$ is a strong unit of a (not necessarily Abelian)
$\ell$-group $G$,
$$\Gamma(G,u) := [0,u]
$$
and
\begin{align*}
x \oplus y &:=
(x+y) \wedge u, \\
x^- &:= u - x,\\
x^\sim &:= -x +u,\\
x\odot y&:= (x-u+y)\vee 0,
\end{align*}
then $\Gamma(G,u):= ([0,u];\oplus, ^-,^\sim,0,u)$ is a pseudo MV-algebra \cite{GeIo}. Conversely, for every pseudo MV-algebra $M$, there is a unique unital $\ell$-group $(G,u)$ (up to isomorphism of unital $\ell$-groups) such that $M \cong \Gamma(G,u)$, and there is a categorical equivalence between the category of pseudo MV-algebras and the category of unital $\ell$-groups as it follows from the basic representation theorem \cite{Dvu1} given by the functor $(G,u)\mapsto \Gamma(G,u)$.

We define a partial operation, $+$, on $M$ in such a way: $x+y$ is defined in $M$ iff $x\odot y=0$, and in such a case, we set $x+y:=x\oplus y$. Using representation of pseudo MV-algebras by unital $\ell$-groups, we see that the partial operation $+$ coincides with the group addition restricted to $M$. The operation $+$ is associative. We note, that if $x\le y$ for $x,y \in M$, then there are two unique elements $z_1,z_2\in M$ such that $z_1+x=y=x+z_2$. We denote them by $z_1=y-x$ and $z_2=-x+y$, and using the group representation, $-$ coincides with the group subtraction. Since $+$ is associative, we say that a finite system $(a_i)_{i=1}^n$ of $M$ is {\it summable} if there is an element $a=a_1+\cdots+a_n\in M$; $a$ is said to be the {\it sum} of $(a_i)_{i=1}^n$ and the sequence $(a_i)_{i=1}^n$ is said to be {\it summable}.

Since there is a categorical equivalence between pseudo MV-algebras and unital $\ell$-groups, \cite{Dvu1}, for the partial addition $+$ on any pseudo MV-algebra the following form of the Riesz Decomposition property, called the {\it strong Riesz Decomposition Property}, RDP$_2$ for short, holds: If for any $a_1,a_2, b_1, b_2 \in M$ such that $a_1+a_2 = b_1+b_2$ there are four elements $c_{11}, c_{12}, c_{21}, c_{22} \in M$ such that $a_1=c_{11}+c_{12}$, $a_2=c_{21}+c_{22}$, $b_1=c_{11}+c_{21}$, $b_2=c_{12}+c_{22}$ and $c_{12}\wedge c_{21}=0$.
It is derived from such a decomposition holding in $\ell$-groups, see \cite[Thm V.1]{Fuc} and \cite{DvVe1,DvVe2}.
Equivalently, if $a_1,\ldots,a_m$ and $b_1,\ldots,b_n$ are elements of a pseudo MV-algebra $M$ such that $a_1+\cdots+a_m=b_1+\cdots+b_n$, there is a system $\{c_{ij}\colon 1\le i\le m, 1\le j \le n\}$ of elements of $M$ satisfying
\begin{equation}\label{eq:RDP1}
a_i=c_{i1}+\cdots+c_{in},\quad b_j = c_{1j}+\cdots+c_{mj},
\end{equation}
for all $1\le i\le m$, $1\le j\le n$, and
\begin{equation}\label{eq:RDP2}
(c_{i+1,j}+\cdots +c_{mj})\wedge (c_{i,j+1}+\cdots+ c_{in})=0, \quad i<m, j<n.
\end{equation}

For any $x\in M$ and any integer $n\ge 0$, we define
\begin{align*}
&x^0=1, x^{n+1}=x^n\odot x,\ n\ge 1,\\
&0\odot x=0, (n+1)\odot x= (n\odot x)\odot x,\ n\ge 1,\\
&0x=0, (n+1)x=nx+x, n\ge 1, \text{ if } nx + x \text{ exists in } M.
\end{align*}

A real vector space $R$ with a fixed partial order $\le$ is a {\it Riesz space} if
\begin{itemize}
\item[{\rm (i)}] $R$ with respect to $\le$ is an $\ell$-group;
\item[{\rm (ii)}] $f\in R^+$ implies $af\in R^+$ for every real number $a\ge 0$.
\end{itemize}

A Riesz space $R$ is {\it Archimedean} if it is Archimedean as an $\ell$-group, analogously $R$ is {\it Dedekind} $\sigma$-{\it complete} or {\it Dedekind complete} if so is $R$ as an $\ell$-group. We note that if $R$ is Dedekind complete, then it is Dedekind $\sigma$-complete, and if $R$ is Dedekind $\sigma$-complete then it is Archimedean. A {\it Riesz ideal} of $R$ is any $\ell$-ideal of $R$. We note that any Riesz ideal of $R$ is a Riesz subspace of $R$.

A {\it unital Riesz space} is a couple $(R,1_R)$ where $R$ is a Riesz space and $1_R$ is a fixed strong unit of $R$. Important examples of unital Archimedean Riesz spaces are spaces of real-valued functions on some topological space: Let $T\ne \emptyset$ be a compact Hausdorff topological space. We denote by $C(T)$ and $C_b(T)$ the system of all continuous real-valued functions of $T$ and the system of all bounded real-valued functions on $T$, respectively. Then $C(T)$ and $C_b(T)$ are Archimedean Riesz spaces with respect to the partial order of functions $f\le g$ iff $f(t)\le g(t)$ for each $t \in T$. The function $1_T$ defined by $1_T(t)=1$ for each $t \in T$ is a strong unit for both $C(T)$ and $C_b(T)$. Both spaces are closed under usual product of two functions, so that both spaces are Banach algebras. The space $C(T)$ has an important property: If $T'$ is another non-void compact Hausdorff space, then there is an isomorphism of Riesz spaces between $(C(T),1_T)$ and $(C(T'),1_{T'})$ preserving product of functions iff $T$ and $T'$ are homeomorphic, see \cite[Thm IV.6.26]{DuSc}. In addition, let $T'=\mathcal M(C(T))$ denote the space of maximal ideals of $C(T)$. Under the hull-kernel topology, $T'$ is homeomorphic to $T$ and $C(T)$ and $C(T')$ are isometric and isomorphic Riesz spaces, see \cite[Ex 45.7]{LuZa}.

In the last period, there has appeared a class of important MV-algebras, Riesz MV-algebras, which are connected with Riesz spaces, for more details, see \cite{DiLe}. We note that if $(R,1_R)$ is a unital Riesz space, then the MV-algebra $\Gamma(R,1_R)$ is a characteristic example of a Riesz MV-algebra.

For Archimedean unital Riesz spaces there is a representation theorem by Yosida, see \cite{Yos} or \cite[Thm 45.3]{LuZa}:

\begin{theorem}\label{th:Yos}{\rm [Yosida Theorem]}
Let $(R,1_R)$ be an Archimedean unital Riesz space. Then there is a compact Hausdorff topological space $T$ such that $R$  can be embedded as a Riesz subspace into $C(T)$, the Riesz space of continuous real-valued functions on $T$, such that $1_R$ maps to the constant function $1_T$, where $1_T(t)=1$, $t\in T$. In addition, $T$ can be chosen such that the image of the embedding of $R$ into $C(T)$ is uniformly dense in $C(T)$, i.e. the uniform closure of the image of $R$ is $C(T)$.
\end{theorem}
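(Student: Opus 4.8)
The proof proceeds via the classical Gelfand–Kakutani–Yosida route, identifying the relevant compact Hausdorff space with a set of states (positive normalized linear functionals) on $R$. First I would consider the set $S(R,1_R)$ of all Riesz homomorphisms $\phi\colon R\to\mathbb{R}$ with $\phi(1_R)=1$; equivalently, these are the normalized positive linear functionals whose kernels are maximal Riesz ideals. Equip $S(R,1_R)$ with the weak-$*$ topology inherited from $\mathbb{R}^R$. Since $1_R$ is a strong unit, each such $\phi$ satisfies $|\phi(f)|\le n$ whenever $|f|\le n1_R$, so $S(R,1_R)$ sits inside a product of compact intervals $\prod_{f\in R}[-m_f,m_f]$; one checks it is closed there (the homomorphism identities and the normalization are preserved under pointwise limits), hence $T:=S(R,1_R)$ is compact Hausdorff. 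I would also verify $T\neq\emptyset$: every proper Riesz ideal extends to a maximal one by Zorn's lemma, and the quotient of $R$ by a maximal Riesz ideal, being Archimedean and totally ordered with strong unit $1_R$, embeds into $\mathbb{R}$ by Hölder's theorem, yielding a point of $T$.

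Next I would define the evaluation map $\iota\colon R\to C(T)$ by $\iota(f)(\phi)=\phi(f)$. Each $\iota(f)$ is continuous by definition of the weak-$*$ topology, and $\iota$ is clearly linear, order-preserving, and sends $1_R$ to $1_T$. The key point is that $\iota$ is \emph{injective}, equivalently that the maximal Riesz ideals separate points of $R$; this follows from Archimedeanicity: if $f\neq 0$, say $f\not\le 0$, then $(f^+)$ wait—more carefully, the intersection of all maximal Riesz ideals is the set of infinitely small elements, which is $\{0\}$ precisely because $R$ is Archimedean with strong unit. So some $\phi\in T$ has $\phi(f)\neq 0$. Thus $\iota$ is a Riesz space embedding with $\iota(1_R)=1_T$.

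Finally I would prove the density assertion: $\iota(R)$ is uniformly dense in $C(T)$. The natural tool is the lattice version of the Stone–Weierstrass theorem: $\iota(R)$ is a linear sublattice of $C(T)$ (it is a sublattice because $\iota$ preserves $\vee,\wedge$), it contains the constants (as it contains $1_T$ and is a real vector space), and it separates the points of $T$ — indeed, two distinct $\phi_1\neq\phi_2$ differ on some $f\in R$, so $\iota(f)$ separates them. The Stone–Weierstrass lattice theorem then gives that the uniform closure of $\iota(R)$ is all of $C(T)$. Replacing $R$ by $\iota(R)$ and $(R,1_R)$ by its image inside $(C(T),1_T)$ gives exactly the stated conclusion.

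The main obstacle is the injectivity/separation step, where one must genuinely use the Archimedean hypothesis: without it, the intersection of maximal Riesz ideals can be nontrivial and the map $\iota$ need not be injective. Everything else — compactness of $T$, continuity of evaluations, the lattice structure of $\iota(R)$, and the final appeal to Stone–Weierstrass — is routine once the correct space $T=S(R,1_R)$ is chosen. (Alternatively one can cite \cite[Thm 45.3]{LuZa} directly, but the argument above is the standard self-contained route.)
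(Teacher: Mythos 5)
The paper does not prove this statement at all: it is quoted as the classical Yosida representation theorem with references to \cite{Yos} and \cite[Thm 45.3]{LuZa}, so there is no internal proof to compare against. Your outline is precisely the standard argument those sources give --- take $T$ to be the weak-$*$ compact set of normalized real-valued Riesz homomorphisms (equivalently, maximal Riesz ideals), embed $R$ by evaluation, and get uniform density from the Kakutani--Krein lattice version of Stone--Weierstrass --- and it is correct in all essentials. The only place where you lean on more than you prove is the injectivity step: the assertion that the intersection of all maximal Riesz ideals equals the set of elements $f$ with $n|f|\le 1_R$ for all $n$ (hence $\{0\}$ in the Archimedean case) is itself the substantive lemma at the heart of Yosida's theorem, not an immediate observation; it requires showing that every $f>0$ which is not infinitely small lies outside some maximal ideal (e.g.\ by extending the ideal generated by $(f-\varepsilon 1_R)^+$ for suitable $\varepsilon$ and checking $f$ stays outside). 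With that lemma supplied, your proof is complete and self-contained.
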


We remind that there are nice topological characterizations, Nakano's theorems, when the Riesz space $C(T)$ ($T\ne \emptyset$ compact and Hausdorff) is Dedekind $\sigma$-complete and Dedekind complete, respectively: (1) $C(T)$ is Dedekind $\sigma$-complete iff $T$ is a {\it basically disconnected space}, that is, the closure of every open $F_\sigma$ subset of $T$ is open, see e.g. \cite[Cor 9.3]{Goo}, \cite[Thm 43.9]{LuZa}. (2) $C(T)$ is Dedekind complete iff $T$ is {\it extremally disconnected}, that is, the closure of every open set of $T$ is open, see \cite[Thm 43.11]{LuZa}. We note that the same characterizations hold also for the Riesz space $C_b(T)$  of bounded real-valued functions on $T$.

General Dedekind $\sigma$-complete unital Riesz spaces are characterized as follows, see \cite[Thm 45.4]{LuZa}:

\begin{theorem}\label{th:sigma}
If $(R,1_R)$ is a Dedekind $\sigma$-complete Riesz space, then it is isomorphic to some $(C(T),1_T)$, where $T\ne \emptyset$ is a compact basically disconnected Hausdorff topological space.
\end{theorem}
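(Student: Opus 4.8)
The plan is to derive the statement from the Yosida representation, Theorem~\ref{th:Yos}, by upgrading the dense embedding there to an \emph{onto} embedding, and then to read off basic disconnectedness of the base space from Nakano's characterization of Dedekind $\sigma$-complete spaces $C(T)$ recalled just above. Thus the argument splits into three pieces: (1) produce a Riesz embedding $\iota\colon (R,1_R)\hookrightarrow (C(T),1_T)$ with uniformly dense image; (2) show $\iota$ is surjective, so it is an isomorphism of unital Riesz spaces; (3) transport the hypothesis through $\iota$ and apply Nakano.

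For step~(1): a Dedekind $\sigma$-complete Riesz space is Archimedean (noted in the excerpt), so Theorem~\ref{th:Yos} gives a compact Hausdorff space $T\neq\emptyset$ and an injective Riesz homomorphism $\iota\colon R\to C(T)$ with $\iota(1_R)=1_T$ whose image is uniformly dense in $C(T)$. Since $\iota$ is an injective Riesz homomorphism it is an order embedding (it preserves and reflects the modulus and the order), and since $1_R$ is a strong unit the formula $\|f\|:=\inf\{\lambda>0\colon |f|\le \lambda 1_R\}$ defines a norm on $R$. From $\iota(1_R)=1_T$ and $|f|\le\lambda 1_R\iff |\iota(f)|\le \lambda 1_T\iff \|\iota(f)\|_\infty\le\lambda$ one gets $\|f\|=\|\iota(f)\|_\infty$ for all $f\in R$; that is, $\iota$ is an isometry onto its image with respect to the strong-unit norm on $R$ and the supremum norm on $C(T)$.

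For step~(2) — the heart of the proof — one uses that a Dedekind $\sigma$-complete Riesz space is \emph{uniformly complete}, i.e.\ complete in the strong-unit norm $\|\cdot\|$. Granting this, $\iota(R)$ is a complete, hence closed, linear subspace of $C(T)$; being also uniformly (that is, norm-) dense in $C(T)$ it equals $C(T)$. Therefore $\iota\colon (R,1_R)\to(C(T),1_T)$ is an isomorphism of unital Riesz spaces. Finally, in step~(3), transporting the hypothesis along $\iota$ shows $C(T)$ is Dedekind $\sigma$-complete, and by Nakano's theorem recalled above (see \cite[Thm 43.9]{LuZa}) this forces $T$ to be basically disconnected, which is the assertion.

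The one ingredient that is not mere bookkeeping is the claim that Dedekind $\sigma$-completeness implies uniform completeness; it is classical (Luxemburg--Zaanen), and if a self-contained proof is wanted it goes as follows. Given a $\|\cdot\|$-Cauchy sequence, pass to a subsequence $(x_n)$ with $\|x_{n+1}-x_n\|\le 2^{-n}$; then $x_n-2^{1-n}1_R$ is increasing and, being bounded above by $x_1+1_R$, has a supremum $x$ by $\sigma$-completeness, and one checks $|x-x_n|\le 2^{1-n}1_R$, i.e.\ $\|x-x_n\|\to 0$, whence the original Cauchy sequence converges to $x$ as well. So the main obstacle is essentially quoting/reproving this completeness fact; once it is in hand, the rest is the Yosida$+$Nakano packaging described above.
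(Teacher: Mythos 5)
Your proof is correct. Note, however, that the paper does not prove this statement at all: it is quoted as a known result with the citation [LuZa, Thm 45.4] (Luxemburg--Zaanen), so there is no in-paper argument to compare against. What you have written is, in substance, the standard proof of that cited theorem: Yosida gives a unital Riesz embedding $\iota\colon R\to C(T)$ with uniformly dense image; the strong-unit norm makes $\iota$ an isometry (your observation that an injective Riesz homomorphism reflects order, via $\iota(f^-)=\iota(f)^-=0\Rightarrow f^-=0$, is exactly what is needed there); Dedekind $\sigma$-completeness yields uniform completeness, so the image is closed as well as dense, hence all of $C(T)$; and Nakano's theorem then identifies $T$ as basically disconnected. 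Your self-contained completeness argument also checks out: with $\|x_{n+1}-x_n\|\le 2^{-n}$ the sequence $y_n=x_n-2^{1-n}1_R$ is increasing and bounded above by $x_1+1_R$, its supremum $x$ satisfies $-2^{1-n}1_R\le x-x_n\le 2^{1-n}1_R$, and Archimedeanicity (which follows from $\sigma$-completeness) guarantees both that $\|\cdot\|$ is a genuine norm and that the infimum defining it is attained. So the proposal is a complete and correct proof of a statement the paper merely imports.
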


In addition, if $(R,1_R)$ is even Dedekind complete, then $T$ is extremally  disconnected. In both cases, the space $T$ can be chosen as the set of maximal ideals of $R$ topologized by the hull-kernel topology.

More about Riesz spaces can be found in \cite{LuZa} and some information about representations of Archimedean Riesz spaces by systems of functions attaining also infinite values are in the survey \cite{Fil}.

\section{$(R,1_R)$-states on Pseudo MV-algebras}

In the present section, we define states on pseudo MV-algebras and then we define $(R,1_R)$-states as additively defined mappings on a pseudo MV-algebra $M$ which preserve the partial addition $+$ on $M$ and have values in the interval $[0,1_R]$ of a unital Riesz space $(R,1_R)$ mapping the top element $1\in M$ onto the strong unit $1_R\in R$. We introduce also extremal $(R,1_R)$-states, $(R,1_R)$-state-morphisms, and we show relationships between them.

States, analogues of  finitely additive measures, on pseudo MV-algebras were introduced in \cite{DvuS} as follows:
Let $M$ be a pseudo MV-algebra. A {\it state} on $M$ is any real-valued mapping $s: M \to [0,1]$ such that (i) $s(1)=1$, and (ii) $s(x+y)=s(x)+s(y)$ whenever $x+y$ is defined in $M$. According to \cite[Prop 4.1]{DvuS}, if $s$ is a state on $M$, then (i) $s(0)=0$, (ii) $s(a)\le s(b)$ if $a\le b$, (iii) $s(x^-)=1-s(x)= s(x^\sim)$, (iv) $s(x^=)=s(x)=s(x^\approx)$, (v) $s(x\vee y)+s(x\wedge y)= s(x)+s(y)=s(x\oplus y)+s(x\odot y)$, (vi) $s(x\oplus y)=s(y\oplus x)$. A state $s$ is {\it extremal} if from $s=\lambda s_1 + (1-\lambda)s_2$ for states $s_1,s_2$ on $M$ and $\lambda\in (0,1)$ we have have $s_1=s_2$. Let $\mathcal S(M)$ and $\mathcal S_\partial(M)$ denote the set of all states and extremal states, respectively, on $M$. It can happen that $\mathcal S(M)=\emptyset$, see \cite[Cor 7.4]{DvuS}, however, if $M$ is an MV-algebra, $M$ has at least one state, \cite[Cor 4.4]{Goo}. We note that a pseudo MV-algebra possesses at least one state iff $M$ has at least one maximal ideal that is also normal, see \cite{DvuS}. We say that a net $\{s_\alpha\}_\alpha$ of states on $M$ {\it converges weakly} to a states $s$ if $s(x)=\lim_\alpha s_\alpha(x)$ for each $x \in M$. Then $\mathcal S(E)$ and $\mathcal S_\partial(M)$ are either simultaneously the empty sets or non-void compact Hausdorff topological spaces, and due to the Krein--Mil'man Theorem, every state $s$ on $M$ is a weak limit of a net of convex combinations of extremal states.

Now we extend the notion of a state to a Riesz space-valued mapping.

\begin{definition}\label{de:3.1}
Let $1_R$ be a strong unit of a Riesz space $R$. An $(R,1_R)$-{\it state} on a pseudo MV-algebra $M$ is any mapping $s:M\to [0,1_R]$ such that {\rm (i)} $s(1)=1_R$ and {\rm (ii)} $s(x+y)=s(x)+s(y)$ whenever $x+y$ is defined in $M$.
\end{definition}

An $(R,1_R)$-{\it state-morphism} on a pseudo MV-algebra $M$ is any homomorphism of pseudo MV-algebras $s:M \to \Gamma(R,1_R)$. We denote by $\mathbb R$ the group of real numbers.
It is evident that any $(\mathbb R,1)$-state is a state on $M$.
An $(\mathbb R,1)$-state-morphism on $M$ is said to be a {\it state-morphism}.
It is clear that any $(R,1_R)$-state-morphism is in fact an $(R,1_R)$-state on $M$. The converse is not true, in general.

We can define also an $(R,1_R)$-{\it state} on every unital $\ell$-group $(G,u)$ as follows: It is a mapping $s: G\to R$ such that (i) $s(g)\ge 0$ if $g\ge 0$, (ii) $s(g+h)=s(g)+s(h)$ for all $g,h \in G$, and (iii) $s(u)=1_R$. The restriction of any $(R,1_R)$-state on $(G,u)$ onto the pseudo MV-algebra $\Gamma(G,u)$ gives an $(R,1_R)$-state on $\Gamma(G,u)$, and using the categorical equivalence between pseudo MV-algebras and unital $\ell$-groups, see \cite[Thm 6.4]{Dvu1}, every $(R,1_R)$-state on $\Gamma(G,u)$ can be extended to a unique $(R,1_R)$-state on $(G,u)$.

It is worthy of recalling that if $s$ is an $(R,1_R)$-state on $\Gamma(S,1_S)$, where $(S,1_S)$ is a unital Riesz space, then $s(tx)=ts(x)$
for each $x \in \Gamma(S,1_S)$ and any real number $t \in [0,1]$. Indeed, since $s(x)=s(n \frac{1}{n}x)=ns(\frac{1}{n}x)$, i.e. $s(\frac{1}{n}x)= \frac{1}{n}s(x)$. Then for each integer $m=0,1,\ldots,n$, we have $s(\frac{m}{n}x)=s(m\frac{1}{n}x)=ms(\frac{1}{n}x)= \frac{m}{n}s(x)$. The statement is trivially satisfied if $t=0,1$. Thus let $t \in (0,1)$. There are two sequences of rational numbers $\{p_n\}$ and $\{q_n\}$ from the interval $(0,1)$ such that $\{s_n\}\nearrow t$ and  $\{q_n\}\searrow t$ which implies $p_ns(x)= s(p_nx)\le s(tx) \le s(q_nx)=q_ns(x)$, so that $s(tx)=ts(x)$.

In addition, if $s$ is an $(R,1_R)$-state on a unital Riesz space $(S,1_S)$, we can show that $s(ta)=ts(a)$ for each $a \in S$ and $t \in \mathbb R$.

\begin{proposition}\label{pr:3.2}
Let $s$ be an $(R,1_R)$-state on a pseudo MV-algebra $M$. Then
\begin{itemize}
\item[{\rm (i)}] $s(0)=0$.
\item[{\rm (ii)}] If $x\le y$, then $s(x)\le s(y)$, and
$$
s(y\odot x^-)=s(y)-s(x)=s(x^\sim \odot y).
$$
\item[{\rm (iii)}] $s(x^-)=1-s(x)= s(x^\sim)$.
\item[{\rm (iv)}] $s(x^=)=s(x)=s(x^\approx)$.
\item[{\rm (v)}] $s(x\vee y)+s(x\wedge y)= s(x)+s(y)$.
\item[{\rm (vi)}] $s(x\oplus y)+s(x\odot y)=s(x)+s(y)$.
\item[{\rm (vii)}] $s(x\oplus y) \oplus s(x\odot y)=s(x)\oplus s(y)$.
\item[{\rm (viii)}] The kernel of $s$, $\Ker(s):=\{x \in M\colon s(x)=0\}$, is a normal ideal of $M$.
\item[{\rm (ix)}] $[x]=[y]$ if and only if $s(x)=s(x\wedge y)=s(y)$, where where $[x]$ and $[y]$ are the cosets in $M/\Ker(s)$ determined by $x,y \in  M$.
\item[{\rm (x)}] There is a unique $(R,1_R)$-state $\tilde s$ on $M/\Ker(s)$ such that $\tilde s([x]) = s(x)$ for each $[x] \in M/\Ker(s)$.

\item[{\rm (xi)}] $\tilde s([x])=0$ if and only if $[x]=[0]$.

\item[{\rm (xii)}] $s(x\oplus y)=s(y\oplus x)$ whenever $R$ is an Archimedean Riesz space. In addition,  $M/\Ker(s)$ is an Archimedean MV-algebra.
\end{itemize}
\end{proposition}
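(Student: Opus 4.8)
The plan is to push the whole statement down to the unital $\ell$-group level. Write $M=\Gamma(G,u)$ for the unital $\ell$-group $(G,u)$ given by the categorical equivalence, and let $\hat s\colon G\to R$ be the unique $(R,1_R)$-state on $(G,u)$ extending $s$; thus $\hat s$ is a group homomorphism (so $\hat s(0)=0$, $\hat s(-g)=-\hat s(g)$) which is positive, hence order preserving, with $\hat s(u)=1_R$. I will use throughout the $\ell$-group dictionary $x^-=u-x$, $x^\sim=-x+u$, $x\oplus y=(x+y)\wedge u$, $x\odot y=(x+y-u)\vee 0$, and, for $x\le y$, $y\odot x^-=y-x$, $x^\sim\odot y=-x+y$. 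Granting this, (i)--(iv) are immediate: (i) is $\hat s(0)=0$; (ii) follows from $\hat s(y)-\hat s(x)=\hat s(y-x)\ge 0$ together with the two displayed identities for $y\odot x^-$ and $x^\sim\odot y$; (iii) from $\hat s(x^-)=\hat s(u)-\hat s(x)=1_R-\hat s(x)$ and likewise for $x^\sim$; and (iv) by applying (iii) twice.

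For (v) I will invoke the identity $x\vee y=x-(x\wedge y)+y$, which holds in every $\ell$-group since $x-(x\wedge y)=(x-y)\vee 0=(x\vee y)-y$. Applying $\hat s$ and using that $R$ is commutative gives $s(x\vee y)=s(x)+s(y)-s(x\wedge y)$, which is (v). For (vi), the identity $(z\wedge u)+\bigl(z-(z\wedge u)\bigr)=z$ with $z=x+y$ reads $(x\oplus y)+(x\odot y)=x+y$, so applying $\hat s$ yields $s(x\oplus y)+s(x\odot y)=s(x+y)=s(x)+s(y)$. Then (vii) is formal: in $\Gamma(R,1_R)$ one has $a\oplus b=(a+b)\wedge 1_R$, so $s(x\oplus y)\oplus s(x\odot y)=\bigl(s(x\oplus y)+s(x\odot y)\bigr)\wedge 1_R=\bigl(s(x)+s(y)\bigr)\wedge 1_R=s(x)\oplus s(y)$.

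For (viii), $\Ker(s)$ is downward closed by the monotonicity in (ii) and closed under $\oplus$ because $0\le s(a\oplus b)=s(a)+s(b)-s(a\odot b)\le s(a)+s(b)$ by (vi), so it is an ideal of $M$. For normality I put $N:=\{g\in G\colon \hat s(|g|)=0\}$; since $\hat s$ is monotone on $G^+$ and every positive element of $G$ is a finite sum of elements of $[0,u]$, $N$ is the $\ell$-ideal of $G$ generated by $\Ker(s)$ and $N\cap M=\Ker(s)$. Moreover $N$ is normal: $|{-g+n+g}|=-g+|n|+g$ gives $\hat s(|{-g+n+g}|)=-\hat s(g)+\hat s(|n|)+\hat s(g)=0$ for $n\in N$ (using again that $R$ is commutative), so by the correspondence between normal ideals of $M$ and normal $\ell$-ideals of $G$, $\Ker(s)$ is a normal ideal. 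For (ix)--(xi) I use the description of the congruence on $M$ induced by $I:=\Ker(s)$, namely $[x]=[y]$ iff $x\odot y^-\in I$ and $y\odot x^-\in I$. Since $x\odot y^-=(x-y)\vee 0=x-(x\wedge y)$ in $G$, part (ii) gives $s(x\odot y^-)=s(x)-s(x\wedge y)$ and symmetrically $s(y\odot x^-)=s(y)-s(x\wedge y)$; hence $[x]=[y]$ iff $s(x)=s(x\wedge y)=s(y)$, which is (ix). Then $\tilde s([x]):=s(x)$ is well defined by (ix), takes values in $[0,1_R]$, sends $[1]$ to $1_R$, and is additive: if $[x]+[y]$ exists in $M/I$ then $x\odot y\in I$, so $[x]+[y]=[x\oplus y]$ and $\tilde s([x\oplus y])=s(x\oplus y)=s(x)+s(y)-s(x\odot y)=s(x)+s(y)$ by (vi); uniqueness is forced by $\tilde s\circ\pi=s$, giving (x). Finally (xi) is $\tilde s([x])=0\iff s(x)=0\iff x\in I\iff[x]=[0]$.

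The substantial point is (xii); assume $R$ Archimedean. By the Yosida theorem (Theorem \ref{th:Yos}) regard $R$ as a Riesz subspace of $C(T)$ with $1_R=1_T$, $T$ compact Hausdorff. For each $t\in T$, the map $s_t\colon x\mapsto s(x)(t)$ is an ordinary state on $M$, so by the known properties of real-valued states \cite{DvuS} it is commutative and $M/\Ker(s_t)$ is an MV-algebra, whence $s_t(x\oplus y)=s_t(y\oplus x)$ and $s_t\bigl((x\oplus y)\wedge(y\oplus x)\bigr)=s_t(x\oplus y)$. Evaluating at all $t$ gives $s(x\oplus y)=s(y\oplus x)$ and $s\bigl((x\oplus y)\wedge(y\oplus x)\bigr)=s(x\oplus y)$, so by (ix) $[x\oplus y]=[y\oplus x]$ in $M/\Ker(s)$, i.e. $M/\Ker(s)$ is a commutative MV-algebra. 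It is Archimedean: by (xi), $\tilde s$ vanishes only at $[0]$, so if $[x]$ is an infinitesimal (every multiple $n[x]$ exists) then $n\,\tilde s([x])=\tilde s(n[x])\le 1_R$ for all $n$, forcing $\tilde s([x])=0$ since $R$ is Archimedean, hence $[x]=[0]$; an MV-algebra without nonzero infinitesimals is Archimedean. The main obstacles I anticipate are getting the non-commutative $\ell$-group identities right (especially the sidedness in (ii) and the normality argument in (viii)) and transferring the real-state facts of \cite{DvuS} correctly through the Yosida embedding in (xii).
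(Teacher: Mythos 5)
Your proof is correct. For items (i)--(vii) and (ix)--(xi) you follow essentially the same route as the paper: pass to $M=\Gamma(G,u)$ and compute with $\ell$-group identities (the paper uses exactly the identities $(x\vee y)-y=x-(x\wedge y)$ for (v) and $x+y=(x\oplus y)+(x\odot y)$ for (vi)); your derivation of the additivity of $\tilde s$ in (x) directly from (vi) is a mild streamlining of the paper's argument, which instead replaces $x$ by $x_0=x\wedge y^-$. The two places where you genuinely diverge are (viii) and (xii). For normality in (viii) the paper stays inside $M$ and computes: from $s(a\oplus x)=s(a)$ and $a\oplus x=(a\oplus x)\odot a^-\oplus a$ it reads off $s((a\oplus x)\odot a^-)=0$, and symmetrically on the other side; you instead lift to $G$, check that $N=\{g\colon \hat s(|g|)=0\}$ is a normal $\ell$-ideal with $N\cap[0,u]=\Ker(s)$, and invoke the correspondence between normal ideals of $\Gamma(G,u)$ and normal $\ell$-ideals of $G$ coming from the categorical equivalence of \cite{Dvu1}. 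Both are fine; the paper's computation is self-contained, while yours outsources the work to a standard (but citable) structural fact. For (xii) the paper first proves that $M/\Ker(s)$ is Archimedean (exactly your argument: $n\tilde s([x])\le 1_R$ forces $\tilde s([x])=0$ since $R$ is Archimedean, then (xi)) and only then obtains commutativity, hence $s(x\oplus y)=s(y\oplus x)$, from the theorem that Archimedean pseudo MV-algebras are commutative \cite[Thm 4.2]{DvuS}. You reverse the order: you get commutativity by embedding $R$ into $C(T)$ via Theorem \ref{th:Yos}, reducing to the real-valued states $s_t=\mathrm{ev}_t\circ s$, and pulling back the known identities $s_t(x\oplus y)=s_t(y\oplus x)$ and $s_t((x\oplus y)\wedge(y\oplus x))=s_t(x\oplus y)$ pointwise, then proving Archimedeanicity afterwards. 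This works, but note it is not really more elementary: the facts you import about $s_t$ from \cite{DvuS} (commutativity of $M/\Ker(s_t)$) are themselves proved there by the same Archimedean-implies-commutative theorem, so the paper's order of deduction is the shorter path and avoids Yosida entirely in this proposition.
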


\begin{proof}
Assume $M = \Gamma(G,u)$ for some unital $\ell$-group $(G,u)$.
Properties (i)--(iv) follow directly from definition of pseudo MV-algebras and $(R,1_R)$-states.

(v) It follows from equalities $(x\vee y)\odot y^- =(x\vee y)-y = x-(x\wedge y)=x\odot (x\wedge y)^-$ which hold in the $\ell$-group $G$ and the pseudo MV-algebra $M$.

(vi) It follows from the identity $x = (x\oplus y)\odot y^- + (y\odot x)$, see \cite[Prop 1.25]{GeIo} and (ii).

(vii) It follows from (vi) and from the identity $r_1\oplus r_1 = (r_1+r_2)\wedge 1_R$ for $r_1,r_2 \in [0,1_R]$.

(viii) If $s(x),s(y)=0$, then by (vi), we have $s(x\oplus y) =0$. By (ii) we conclude $\Ker(s)$ is an ideal of $M$. To show that $\Ker(s)$ is normal, let $a\in M$ and $x \in \Ker(s)$. Then $s(a\oplus x)=s(a)=s(x\oplus a)$ and $a\oplus x = (a\oplus x)\odot a^- \oplus a$ so that, $s((a\oplus x)\odot a^-)=0$. In a similar way, we prove that $x\oplus a= a\oplus (a^\sim \odot (x\oplus a))$ and $a^\sim \odot (x\oplus a))\in \Ker(s)$.

(ix) It is evident.

(x)  Let $[x] \le [y]^-$. We define $x_0 = x\wedge y^-$. Then
$x_0 \le y^-$ and $[x_0] = [x\wedge y^-]$, so that

\begin{align*}
\tilde s([x] + [y]) &= \tilde s([x \oplus y]) = \tilde s([x_0
+ y]) = s(x_0 + y) = s(x_0) + s(y)\\
& = \tilde s([x_0]) + \tilde s([y]) = \tilde s([x]) + \tilde s ([y])
\end{align*}
which proves that $\tilde s$ is an $(R,1_R)$-state on $M/\Ker(s)$. By (ix), $[x]=[y]$ implies $s(x)=s(y)$.

(xi) It follows from (ix).

(xii) Due to (xi), $\tilde s([x]) = 0$ iff $[x] = [0]$.
We claim that $M/\Ker(s)$ is an Archimedean pseudo MV-algebra. Indeed, let $n[x]$ be
defined in $M/\Ker(s)$ for any integer $n \ge 1$. Then $\tilde
s(n[x]) = n \tilde s([x]) = n\, s(x) \le 1_R$ for any $n$.
Therefore, $\tilde s([x]) = s(x) =0.$ The Archimedeanicity of
$M/\Ker(s)$ entails the commutativity of $M/\Ker(s)$, see
\cite[Thm 4.2]{DvuS}. Therefore, $s(x \oplus y) =\tilde s([x\oplus y])=\tilde s([x]\oplus [y]) =\tilde s([y] \oplus [x])=\tilde s([y\oplus x]) = s(y \oplus x)$.
\end{proof}

In the same way as for states, we can define extremal $(R,1_R)$-states. Let $\mathcal S(M,R,1_R)$, $\mathcal{SM}(M,R,1_R)$, and $\mathcal S_\partial(M,R,1_R)$ denote the set of $(R,1_R)$-states, $(R,1_R)$-state-morphisms and extremal $(R,1_R)$-states, respectively, on $M$. Analogously, we can define extremal $(R,1_R)$-states on unital $\ell$-groups. In addition, using the categorical equivalence \cite[Thm 6.4]{Dvu1}, if $M =\Gamma(G,u)$, then an $(R,1_R)$-state $s$ on $M$ is extremal iff the unique extension of $s$ to an $(R,1_R)$-state on $(G,u)$ is extremal and vice-versa, that is, an $(R,1_R)$-state on $(G,u)$ is extremal iff its restriction to $\Gamma(G,u)$ is extremal.

\begin{lemma}\label{le:3.3}
An $(R,1_R)$-state $s$ on $M$ is extremal if and only if $\tilde s$ is extremal on $M/\Ker(s)$.
\end{lemma}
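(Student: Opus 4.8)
The plan is to exploit the affine bijection between $(R,1_R)$-states on $M$ that annihilate $\Ker(s)$ and $(R,1_R)$-states on the quotient $M/\Ker(s)$, in the spirit of Proposition~\ref{pr:3.2}(x). Write $\pi\colon M\to M/\Ker(s)$ for the canonical surjection. For any $\sigma\in\mathcal S(M/\Ker(s),R,1_R)$ the composite $\sigma\circ\pi$ is an $(R,1_R)$-state on $M$, and conversely any $(R,1_R)$-state $t$ on $M$ with $\Ker(s)\subseteq\Ker(t)$ factors uniquely as $t=\tilde t\circ\pi$ for some $\tilde t\in\mathcal S(M/\Ker(s),R,1_R)$. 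Both assignments are affine, since $(\lambda\sigma_1+(1-\lambda)\sigma_2)\circ\pi=\lambda(\sigma_1\circ\pi)+(1-\lambda)(\sigma_2\circ\pi)$ and likewise on the quotient.

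For the ``only if'' direction, suppose $\tilde s=\lambda\tilde s_1+(1-\lambda)\tilde s_2$ with $\tilde s_1,\tilde s_2\in\mathcal S(M/\Ker(s),R,1_R)$ and $\lambda\in(0,1)$. Put $s_i:=\tilde s_i\circ\pi$; these are $(R,1_R)$-states on $M$, and since $\tilde s([x])=s(x)$ we get $s=\lambda s_1+(1-\lambda)s_2$. Extremality of $s$ forces $s_1=s_2$, and surjectivity of $\pi$ gives $\tilde s_1=\tilde s_2$.

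For the ``if'' direction, suppose $s=\lambda s_1+(1-\lambda)s_2$ with $s_1,s_2\in\mathcal S(M,R,1_R)$ and $\lambda\in(0,1)$. The key step is the inclusion $\Ker(s)\subseteq\Ker(s_1)\cap\Ker(s_2)$: if $s(x)=0$ then $\lambda s_1(x)+(1-\lambda)s_2(x)=0$ with $s_1(x),s_2(x)\ge 0$ and $\lambda,1-\lambda>0$, and in an $\ell$-group a sum of two positive elements vanishes only when each of them does, so $s_1(x)=s_2(x)=0$. Hence each $s_i$ factors through $\pi$ as $s_i=\tilde s_i\circ\pi$ with $\tilde s_i\in\mathcal S(M/\Ker(s),R,1_R)$; well-definedness on cosets follows from Proposition~\ref{pr:3.2}(ix) together with Proposition~\ref{pr:3.2}(ii) applied to $s_i$ and the inclusion $\Ker(s)\subseteq\Ker(s_i)$. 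Evaluating on a coset $[x]=\pi(x)$ gives $\tilde s([x])=s(x)=\lambda s_1(x)+(1-\lambda)s_2(x)=\lambda\tilde s_1([x])+(1-\lambda)\tilde s_2([x])$, so $\tilde s=\lambda\tilde s_1+(1-\lambda)\tilde s_2$. Extremality of $\tilde s$ yields $\tilde s_1=\tilde s_2$, whence $s_1=\tilde s_1\circ\pi=\tilde s_2\circ\pi=s_2$.

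The only genuinely non-formal point is the factorization in the ``if'' direction, namely verifying that $s_i$ is constant on the cosets of $\Ker(s)$; I expect this to be the main (mild) obstacle, and it is dispatched by the characterization of cosets in Proposition~\ref{pr:3.2}(ix) and the subtraction identity in Proposition~\ref{pr:3.2}(ii). Everything else reduces to the routine observation that composition with the fixed surjection $\pi$ sets up an affine bijection between the two $(R,1_R)$-state spaces in play.
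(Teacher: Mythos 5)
Your proof is correct and follows essentially the same route as the paper: the forward direction lifts a convex decomposition of $\tilde s$ through the quotient map, and the converse direction rests on the inclusion $\Ker(s)\subseteq\Ker(s_1)\cap\Ker(s_2)$ together with well-definedness of the induced states on cosets via Proposition~\ref{pr:3.2}(ix) and (ii). The paper's argument is the same in all essentials, so there is nothing to add.
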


\begin{proof}
(1) Let $s$ be extremal and let $\tilde s = \lambda m_1 + (1-\lambda)m_2$, where $m_1,m_2$ are $(R,1_R)$-states on $M/\Ker(s)$ and $\lambda \in (0,1)$. Then $s_i(x):=m_i([x])$, $x\in M$, $i=1,2$, is an $(R,1_R)$-state on $M$, and $s=\lambda s_1 +(1-\lambda)s_2$ so that $s_1=s_2$ and $m_1=m_2$ proving $\tilde s$ is extremal.

Conversely, let $\tilde s$ be extremal and let $s=\lambda s_1 +(1-\lambda)s_2$ for $(R,1_R)$-states $s_1,s_2$ on $M$ and $\lambda \in (0,1)$. Then $\Ker(s)=\Ker(s_1)\cap \Ker(s_2)$. We assert that $m_i([x]):=s_i(x)$, $[x]\in M/\Ker(s)$, is an $(R,1_R)$-state on $M/\Ker(s)$ for $i=1,2$. Indeed,  first we show that $m_i$ is correctly defined. Thus let $[x]=[y]$. By (ix) of Proposition \ref{pr:3.2}, we have $s(x)=s(x\wedge y)=s(y)$ so that $s(x\odot y^-)=0=s(y\odot x^-)$ which yields $s_i(x\odot y^-)=0=s_i(y\odot x^-)$ and $s_i(x)=s_i(x\wedge y)=s_i(y)$ for $i=1,2$. So that, we have $m_i([x])=m_i([y])$. Therefore, $m_i$ is an $(R,1_R)$-state. Then $\tilde s = \lambda m_1 +(1-\lambda)m_2$ implying $m_1=m_2$ and $s_1=s_2$.
\end{proof}

We note that it can happen that on $M$ there is no $(R,1_R)$-state even for $(R,1_R)=(\mathbb R,1)$ as it was already mentioned. In the following proposition, we show that if $M$ is an MV-algebra, then $\mathcal S(M,R,1_R)$ is non-void.

\begin{proposition}\label{pr:3.4}
Every MV-algebra has at least one $(R,1_R)$-state for every unital Riesz space $(R,1_R)$.
\end{proposition}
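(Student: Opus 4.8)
The plan is to manufacture an $(R,1_R)$-state by composing an ordinary real-valued state with the canonical embedding of the scalar interval $[0,1]$ into $[0,1_R]$. First I would recall from the discussion above that every MV-algebra $M$ (with $0\ne 1$) admits at least one state $s_0\colon M\to[0,1]$, by \cite[Cor 4.4]{Goo}. Next I would introduce the map $\iota\colon[0,1]\to[0,1_R]$ given by $\iota(t)=t\cdot 1_R$ and check it is well defined: since $1_R\ge 0$ in $R$ and $0\le t\le 1$, the Riesz space axiom {\rm (ii)} yields $t\cdot 1_R\ge 0$ and $(1-t)\cdot 1_R\ge 0$, i.e. $0\le t\cdot 1_R\le 1_R$.

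Then I would put $s:=\iota\circ s_0\colon M\to[0,1_R]$ and verify the two conditions of Definition \ref{de:3.1}. For {\rm (i)}, $s(1)=s_0(1)\cdot 1_R=1\cdot 1_R=1_R$. For {\rm (ii)}, if $x+y$ is defined in $M$, then $s_0(x)+s_0(y)=s_0(x+y)\le 1$ in $[0,1]$, and since multiplication by a real scalar is additive in the vector space $R$,
\[
s(x+y)=(s_0(x)+s_0(y))\cdot 1_R=s_0(x)\cdot 1_R+s_0(y)\cdot 1_R=s(x)+s(y).
\]
Hence $s$ is an $(R,1_R)$-state and $\mathcal S(M,R,1_R)\ne\emptyset$.

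There is essentially no obstacle here: the construction uses only the real-vector-space structure of $R$ together with the positivity axiom, so no Archimedean or completeness hypothesis on $(R,1_R)$ is needed, and the whole point is just to identify the correct source object (a real-valued state) and the correct transporting map $\iota$. I would add a remark that, since every MV-algebra has a maximal ideal whose quotient embeds into $\Gamma(\mathbb R,1)$, one may even take $s_0$ to be a state-morphism; as $\iota$ moreover preserves $\oplus$, $\odot$ and the negations (because scalar multiplication by a nonnegative real commutes with $\wedge$ in a Riesz space), the same $s=\iota\circ s_0$ is then an $(R,1_R)$-state-morphism, so in fact $\mathcal{SM}(M,R,1_R)\ne\emptyset$ as well.
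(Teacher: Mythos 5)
Your construction $s(x):=s_0(x)\cdot 1_R$ from a real-valued state $s_0$ guaranteed by \cite[Cor 4.4]{Goo} is exactly the paper's proof, just with the routine verifications written out. Correct, and essentially identical in approach.
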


\begin{proof}
Due to \cite[Cor 4.4]{Goo}, any MV-algebra $M$ has at least one state; denote it by $s_0$. Then the mapping $s:M \to [0,1_R]$ defined by $s(x):=s_0(x)1_R$, $x \in M$, is an $(R,1_R)$-state on $M$.
\end{proof}

The following result was established in \cite{DvuS} for states on MV-algebras. In the following proposition, we extend it for  $(R,1_R)$-states.

\begin{proposition}\label{pr:3.5}
Let $(R,1_R)$ be a unital Riesz space. The following statements are equivalent:
\begin{itemize}
\item[{\rm (i)}] The pseudo MV-algebra $M$ possesses at least one $(R,1_R)$-state.
\item[{\rm (ii)}] $M$ has has at least one maximal ideal that is normal.
\item[{\rm (iii)}] $M$ has at least one state.
\end{itemize}
{\rm (1)} Every linearly ordered pseudo MV-algebra possesses at least one $(R,1_R)$-state. The same is true if $M$ is representable, i.e. it is representable as a subdirect product of linearly ordered pseudo MV-algebras.

{\rm (2)} If $M$ is a linearly ordered pseudo MV-algebra and $(R,1_R)$ is an Archimedean unital Riesz space, then $M$ possesses only a unique $(R,1_R)$-state.
\end{proposition}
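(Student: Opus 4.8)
The plan is to establish the equivalence of (i)--(iii) first and then to read off (1) and (2); throughout we keep the standing hypothesis $1_R\neq 0$, in analogy with $0\neq 1$ for pseudo MV-algebras. The equivalence (ii)$\Leftrightarrow$(iii) is exactly the existence criterion for a state recalled above from \cite{DvuS}. For (iii)$\Rightarrow$(i), take a state $s_0$ on $M$ and put $s(x):=s_0(x)1_R$; as in the proof of Proposition~\ref{pr:3.4} this is an $(R,1_R)$-state, the verification using only additivity of $s_0$ on $+$, $s_0(1)=1$ and $0\le s_0(x)\le 1$. For (i)$\Rightarrow$(iii), let $s$ be an $(R,1_R)$-state on $M$; then $\Gamma(R,1_R)$ is a (Riesz) MV-algebra with $0\neq 1_R$, so by \cite[Cor 4.4]{Goo} it carries a state $\omega$, and whenever $x+y$ is defined in $M$ one has $s(x)\odot s(y)=0$ in $\Gamma(R,1_R)$, whence $\omega\circ s$ is additive on $+$; since $\omega(s(1))=\omega(1_R)=1$, $\omega\circ s$ is a state on $M$. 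Finally, every linearly ordered, and more generally every representable, pseudo MV-algebra carries a state \cite{DvuS}, so (1) is immediate from (iii)$\Rightarrow$(i).

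Now fix a linearly ordered $M$ and an Archimedean $(R,1_R)$; since existence is granted by (1), only uniqueness is at issue. Let $s$ be an $(R,1_R)$-state. Because $R$ is Archimedean, Proposition~\ref{pr:3.2}(xii) tells us that $M/\Ker(s)$ is an Archimedean MV-algebra, and it is linearly ordered (a quotient of a chain being a chain); such an MV-algebra is simple, and by H\"older's theorem for its associated unital $\ell$-group it is isomorphic to a subalgebra $N=\Gamma(H,1)$ of $\Gamma(\mathbb R,1)=[0,1]$ with $H\le\mathbb R$ a subgroup containing $1$ (see \cite{CDM,Fuc}). Since $s(1)=1_R\neq 0$ the ideal $\Ker(s)$ is proper, hence a \emph{maximal} ideal of $M$. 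On the other hand the ideals of a linearly ordered pseudo MV-algebra form a chain, so $M$ has a unique maximal ideal $\mathfrak m$; consequently $\Ker(s)=\mathfrak m$ for \emph{every} $(R,1_R)$-state $s$. Fixing once and for all an isomorphism $j\colon M/\mathfrak m\to N$, and using Proposition~\ref{pr:3.2}(viii),(x),(xi) to write $s=\tilde s\circ\pi$ with $\pi\colon M\to M/\mathfrak m$ the canonical surjection and $\tilde s$ an $(R,1_R)$-state on $M/\mathfrak m$, we see that $\tilde s\circ j^{-1}$ is an $(R,1_R)$-state on $N$. Hence it suffices to prove that the only $(R,1_R)$-state on such an $N$ is $y\mapsto y1_R$, for this forces $\tilde s=j(\cdot)1_R$ and therefore $s=j(\pi(\cdot))1_R$, independently of $s$.

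To prove the last claim, extend the $(R,1_R)$-state on $N$ to an $(R,1_R)$-state $\hat t$ on $(H,1)$ via the categorical equivalence; $\hat t$ is additive, positive and hence monotone, with $\hat t(1)=1_R$. For $h\in H$ and integers $p,q\ge 1$, the real inequality $\frac{p}{q}\le h$ yields $p\cdot 1\le qh$ in $H$, hence $p1_R\le q\hat t(h)$, i.e. $\frac{p}{q}1_R\le\hat t(h)$ after dividing by $q$ in the vector space $R$; symmetrically $\frac{p}{q}1_R\ge\hat t(h)$ whenever $\frac{p}{q}\ge h$. Picking rationals $r_n\nearrow h$ and $r_n'\searrow h$, the element $d:=\hat t(h)-h1_R$ satisfies $-(h-r_n)1_R\le d\le(r_n'-h)1_R$ for every $n$; given $m\ge 1$ choose $n$ with $r_n'-h<1/m$ and $h-r_n<1/m$, so that $md\le 1_R$ and $m(-d)\le 1_R$, and since this holds for all $m$ the Archimedean property of $R$ forces $d=0$. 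Thus $\hat t(h)=h1_R$ for all $h\in H$, which completes the proof of (2). The delicate point, which I expect to be the main obstacle, is precisely this lemma: $H\cap\mathbb Q$ may be as small as $\mathbb Z$, so an irrational $h\in H$ cannot be approximated from both sides by rationals lying in $H$, and one cannot simply compare $h$ by monotonicity with nearby elements of $H$ whose $\hat t$-value is already known. The remedy is to use only the genuinely available comparisons between $p\cdot 1$ and $qh$ for $p,q\in\mathbb Z$, $q\ge 1$, to push them through the additive map $\hat t$, to divide by $q$ in $R$, and only afterwards to invoke the density of $\mathbb Q$ in $\mathbb R$ together with the Archimedean property of $R$ to squeeze $\hat t(h)-h1_R$ to $0$.
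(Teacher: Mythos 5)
Your proof of the equivalences (i)--(iii) and of part (1) is correct and essentially the paper's: (ii)$\Leftrightarrow$(iii) is the cited criterion, (iii)$\Rightarrow$(i) is the scalar construction $s_0(\cdot)1_R$ of Proposition~\ref{pr:3.4} (the paper routes this through the quotient $M/I$, which amounts to the same thing), and (i)$\Rightarrow$(iii) is the composition $\omega\circ s$ with a state $\omega$ on $\Gamma(R,1_R)$, exactly as in the paper's (i)$\Rightarrow$(ii). Part (2), however, you prove by a genuinely different route. The paper embeds $(R,1_R)$ into $(C(T),1_T)$ by the Yosida theorem, composes any two $(R,1_R)$-states with the point evaluations $s_t$, and invokes the known uniqueness of the real-valued state on a linearly ordered pseudo MV-algebra \cite[Thm 5.5]{DvuS} to conclude $s_t\circ s_1=s_t\circ s_2$ for all $t$. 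You instead avoid Yosida entirely: you use Proposition~\ref{pr:3.2}(xii) and H\"older to identify $M/\Ker(s)$ with a fixed subalgebra $\Gamma(H,1)$ of $[0,1]$, observe that $\Ker(s)$ must be the unique maximal ideal of the chain $M$ (so the quotient does not depend on $s$), and then show directly that the only $(R,1_R)$-state on $\Gamma(H,1)$ is $h\mapsto h1_R$ by squeezing $\hat t(h)-h1_R$ between $\pm\frac{1}{m}1_R$ via rational comparisons $p\cdot 1\le qh$ and the Archimedean property of $R$. Your squeezing lemma is correct (the restriction $p\ge 1$ should be relaxed to $p\in\mathbb Z$ to cover $h\le 0$, as you in fact do later in the paragraph), and your worry about $H\cap\mathbb Q$ being small is properly handled by comparing $p\cdot 1$ with $qh$ rather than approximating $h$ inside $H$. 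What each approach buys: the paper's is shorter given the cited machinery and reuses the scalar uniqueness theorem as a black box; yours is more elementary and self-contained, and yields the explicit formula $s(x)=(x/\mathfrak m)\,1_R$ for the unique $(R,1_R)$-state, making transparent that it is the scalar state tensored with $1_R$.
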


\begin{proof}
By \cite[Prop 4.3]{DvuS}, (ii) and (iii) are equivalent.

(i) $\Rightarrow$ (ii). Let $s$ be an $(R,1_R)$-state on $M$. Since $\Gamma(R,1_R)$ is an MV-algebra, it has at least one state, say $s_0$. Then the mapping $s_0\circ s$ is a state on $M$, which by \cite[Prop 4.3]{DvuS} means that $M$ has at least one maximal ideal that is normal.

(ii) $\Rightarrow$ (i). Let $I$ be a maximal ideal of $M$ that is normal. By \cite[Cor 3.5]{DvuS}, $M/I$ is an MV-algebra. Due to Proposition \ref{pr:3.4}, $M/I$ possesses at least one $(R,1_R)$-state, say $s_0$. Then the mapping $s(x):=s_0(x/I)$, $x \in M$, is an $(R,1_R)$-state on $M$.

(1) Now suppose that $M$ is linearly ordered. By \cite[Prop 5.4]{DvuS}, $M$ possesses a unique maximal ideal and this ideal is normal. Applying just proved equivalences, $M$ possesses an $(R,1_R)$-state.

If $M$ is representable, then it can be embedded into a direct product $\prod_t M_{t\in T}$ of linearly ordered pseudo MV-algebras $\{M_t\colon t\in T\}$, i.e. there is an embedding of pseudo MV-algebras $h:M\to \prod_t M_t$ such that $\pi_t\circ h: M \to M_t$ is a surjective homomorphism, where $\pi_t:\prod_{t \in T}M_t\to M_t$ is the $t$-th projection for each $t \in T$. Every $M_t$ possesses an $(R,1_R)$-state $s_t$, so that $s_t \circ \pi_t\circ h$ is an $(R,1_R)$-state on $M$.

(2) Let $M$ be linearly ordered and $(R,1_R)$ be an Archimedean unital Riesz space. By the Yosida Representation Theorem, Theorem \ref{th:Yos}, there is a compact Hausdorff topological space such that $R$ can be embedded into the Riesz space $C(T)$ of continuous real-valued functions on $T$ as its Riesz subspace. Given $t \in T$, define a mapping $s_t: \Gamma(C(T),1_T)\to [0,1]$ defined by $s_t(f)=f(t)$, $f \in \Gamma(C(T),1_T)$; it is a state on $\Gamma(C(T),1_T)$. Due to (1) of the present proof, $M$ admits at least one $(R,1_R)$-state. Let $s_1,s_2$ be $(R,1_R)$-states on $M$. Then $ s_t\circ s_i$ is a state on $M$ for $i=1,2$ and each $t \in T$. According to \cite[Thm 5.5]{DvuS}, $M$ admits only one state. Therefore, $s_t \circ s_1= s_t \circ s_2$ for each $t \in T$, i.e. $s_1=s_2$.
\end{proof}

For additional relationships between $(R,1_R)$-state-morphisms and their kernels as maximal ideals, see Propositions \ref{pr:3.20}--\ref{pr:3.21} below.

We note that in (2) of the latter proposition, if $(R,1_R)$ is not Archimedean, then it can happen that $M$ has uncountably many $(R,1_R)$-states and each of these states is an $(R,1_R)$-state-morphism, see Example \ref{ex:3.8} below.

\begin{proposition}\label{pr:3.6} {\rm (1)} An $(R,1_R)$-state $s$ on a pseudo MV-algebra $M$ is an $(R,1_R)$-state-morphism if and only if
\begin{equation}\label{eq:3.1}
s(x\wedge y) = s(x)\wedge s(y),\quad x,y \in M.
\end{equation}
{\rm(2)}  An $(R,1_R)$-state $s$ on $M$ is an $(R,1_R)$-state-morphism if and only if the $(R,1_R)$-state $\tilde s$ on $M/\Ker(s)$ induced by $s$ is an $(R,1_R)$-state-morphism on $M/\Ker(s)$.
\end{proposition}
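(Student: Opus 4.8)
The plan is to establish (1) first and then deduce (2) from it, by applying (1) both to $s$ and to the induced state $\tilde s$. In (1) the forward implication is immediate: an $(R,1_R)$-state-morphism is by definition a homomorphism $s\colon M\to\Gamma(R,1_R)$ of pseudo MV-algebras, hence it preserves the induced order and the derived lattice operation $x\wedge y=x\odot(x^-\oplus y)$, so $s(x\wedge y)=s(x)\wedge s(y)$. The content lies in the converse.

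For the converse of (1), the crux is the pseudo MV-algebraic identity
\begin{equation*}
x\oplus y \;=\; x+(x^\sim\wedge y),\qquad x,y\in M,
\end{equation*}
where $+$ on the right is the group addition of the representing unital $\ell$-group $(G,u)$, $M=\Gamma(G,u)$. I would verify it in $G$: since left translations are lattice automorphisms, $x+\big((-x+u)\wedge y\big)=\big(x+(-x+u)\big)\wedge(x+y)=u\wedge(x+y)=x\oplus y$. (Note that $x^\sim\wedge y$ need not lie below $x^-$, so the right-hand side is in general only a sum in $G$, not a defined partial sum in $M$; this is why the argument is run in $G$.) Recall from above that $s$ extends uniquely to an additive map $\widehat s\colon G\to R$ with $\widehat s|_M=s$. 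Applying $\widehat s$ to the displayed identity, then additivity, then the hypothesis $s(x\wedge y)=s(x)\wedge s(y)$, then Proposition \ref{pr:3.2}(iii), one gets
\begin{equation*}
s(x\oplus y)=s(x)+\big(s(x^\sim)\wedge s(y)\big)=s(x)+\big((1_R-s(x))\wedge s(y)\big).
\end{equation*}
Since $R$, being a Riesz space, is an Abelian $\ell$-group in which translation distributes over $\wedge$, the right-hand side equals $\big(s(x)+1_R-s(x)\big)\wedge\big(s(x)+s(y)\big)=1_R\wedge(s(x)+s(y))=s(x)\oplus s(y)$. Hence $s$ preserves $\oplus$; it trivially preserves $0$ and $1$ and, by Proposition \ref{pr:3.2}(iii) together with $-a+1_R=1_R-a$ in $R$, it preserves the two unary operations as well. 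So $s$ is a homomorphism of pseudo MV-algebras, i.e.\ an $(R,1_R)$-state-morphism.

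For (2): by Proposition \ref{pr:3.2}(viii),(x), $\Ker(s)$ is a normal ideal, $M/\Ker(s)$ is a pseudo MV-algebra, and $\tilde s$ is a well-defined $(R,1_R)$-state on it with $\tilde s([x])=s(x)$, while $[x]\wedge[y]=[x\wedge y]$ because the quotient map is a homomorphism. Applying part (1) to $s$ and to $\tilde s$: $s$ is an $(R,1_R)$-state-morphism iff $s(x\wedge y)=s(x)\wedge s(y)$ for all $x,y\in M$, and $\tilde s$ is an $(R,1_R)$-state-morphism iff $\tilde s([x]\wedge[y])=\tilde s([x])\wedge\tilde s([y])$ for all cosets, which — since every coset is of the form $[x]$ — says precisely that $s(x\wedge y)=s(x)\wedge s(y)$ for all $x,y\in M$. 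These two conditions coincide, which is (2). (Alternatively one argues directly: $s=\tilde s\circ\pi$ with $\pi\colon M\to M/\Ker(s)$ the quotient homomorphism, so if $\tilde s$ is a homomorphism then so is $s$; and if $s$ is a homomorphism, then $\tilde s([x]\oplus[y])=s(x\oplus y)=s(x)\oplus s(y)=\tilde s([x])\oplus\tilde s([y])$, the unary operations and constants being handled the same way.)

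The single delicate point is the non-commutative bookkeeping in the identity for $\oplus$: it is $x^\sim$, not $x^-$, that occurs, and $x+(x^\sim\wedge y)$ is in general only a sum in the enveloping $\ell$-group, which is precisely why one works with the additive extension $\widehat s$ on $(G,u)$ rather than with the partial addition on $M$. After that the computation takes place in the Abelian $\ell$-group $R$ and is routine.
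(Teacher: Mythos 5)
Your proof is correct and follows essentially the same route as the paper: the forward direction via preservation of the derived operation $x\wedge y=x\odot(x^-\oplus y)$, the converse via the identity $x\oplus y=x+(x^\sim\wedge y)$ together with the computation $s(x)+\bigl((1_R-s(x))\wedge s(y)\bigr)=(s(x)+s(y))\wedge 1_R$ in the Abelian $\ell$-group $R$, and part (2) by applying the criterion of (1) to the quotient $M/\Ker(s)$. Your only deviation---running the additivity step through the extension $\widehat s$ on $(G,u)$---is harmless but also unnecessary: since $x^\sim\wedge y\le x^\sim$, the group sum $x+(x^\sim\wedge y)$ lies below $u$, and this (namely $z\le x^\sim$, not $z\le x^-$) is exactly the condition under which the partial addition $x+z$ is defined in $M$ as the restriction of the group addition, so the paper legitimately applies additivity of $s$ inside $M$.
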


\begin{proof}
(1) Assume that $s$ is an $(R,1_R)$-state-morphism. Then $s(x
\wedge y) = s(x \odot(x^- \oplus y)) = s(x) \odot
(s(x^-) \oplus s(y)) = s(x)\wedge s(y)$.

Conversely, let (\ref{eq:3.1}) hold. Then $x \oplus y = x + (x ^\sim \odot
(x \oplus y)) = x + (x^\sim \wedge y)$, so that $s(x \oplus y) =
s(x) + s(x^\sim \wedge y) = s(x) + (1_R-s(x))\wedge s(y) =
(s(x)+ s(y))\wedge 1_R= s(x)\oplus s(y)$ proving $s$ is an $(R,1_R)$-state-morphism.

(2) Using Proposition \ref{pr:3.2}, we have $s(x\wedge y)=\tilde s([x\wedge y])= \tilde s([x]\wedge [y])$. Applying (1), we have the assertion in question.
\end{proof}

By \cite[Prop 4.3]{DvuS}, a state $s$ on a pseudo MV-algebra $M$ is a state-morphism iff $\Ker(s)$ is maximal. In what follows, we exhibit this criterion for the case of $(R,1_R)$-state-morphisms.

\begin{proposition}\label{pr:3.7}
{\rm (1)} Let $s$ be an $(R,1_R)$-state on $M$. If $\Ker(s)$ is a maximal ideal of $M$, then $s$ is an $(R,1_R)$-state-morphism.

{\rm (2)} In addition, let $(R,1_R)$ be a unital Riesz space such that every element of $R^+\setminus \{0\}$ is a strong unit. If $s$ is an $(R,1_R)$-state-morphism, then $\Ker(s)$ is a maximal ideal.
\end{proposition}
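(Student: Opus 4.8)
The plan is to route both implications through the quotient $M/\Ker(s)$, which is a pseudo MV-algebra because $\Ker(s)$ is a normal ideal of $M$ by Proposition~\ref{pr:3.2}(viii). I will repeatedly use that $\Ker(s)$ is a maximal ideal of $M$ exactly when $M/\Ker(s)$ is simple (the ideals of $M$ containing $\Ker(s)$ correspond to the ideals of $M/\Ker(s)$), and that a simple MV-algebra is linearly ordered by Chang's representation of simple MV-algebras, \cite{CDM}.

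For (1), assuming $\Ker(s)$ is maximal, I would first note that, being also normal, \cite[Cor 3.5]{DvuS} makes $M/\Ker(s)$ an MV-algebra, and maximality of $\Ker(s)$ then makes $M/\Ker(s)$ simple, hence linearly ordered. Let $\tilde s$ be the $(R,1_R)$-state on $M/\Ker(s)$ induced by $s$ via Proposition~\ref{pr:3.2}(x). I would then verify the meet condition of Proposition~\ref{pr:3.6}(1) for $\tilde s$: given $[x],[y]\in M/\Ker(s)$, say $[x]\le[y]$ by linearity, we have $[x]\wedge[y]=[x]$ and $\tilde s([x])\le\tilde s([y])$ by Proposition~\ref{pr:3.2}(ii), so $\tilde s([x]\wedge[y])=\tilde s([x])=\tilde s([x])\wedge\tilde s([y])$. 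Thus $\tilde s$ is an $(R,1_R)$-state-morphism on $M/\Ker(s)$ by Proposition~\ref{pr:3.6}(1), and Proposition~\ref{pr:3.6}(2) upgrades this to $s$ being an $(R,1_R)$-state-morphism on $M$.

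For (2), I would view $s$ as a homomorphism of pseudo MV-algebras $s\colon M\to\Gamma(R,1_R)$, so that $M/\Ker(s)\cong s(M)\subseteq\Gamma(R,1_R)$, and reduce to showing $s(M)$ is simple. Given a nonzero ideal $J$ of $s(M)$, pick $c\in J$ with $c\ne0$; since $c\in R^+\setminus\{0\}$ is, by hypothesis, a strong unit of $R$, there is an integer $n\ge1$ with $1_R\le nc$. A short induction from $x\oplus y=(x+y)\wedge1_R$ (using $1_R\le1_R+c$) shows that the $n$-fold sum $c\oplus\cdots\oplus c$ equals $(nc)\wedge1_R=1_R$, which lies in $J$ because ideals are closed under $\oplus$; as $J$ is also downward closed, $J=s(M)$. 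Hence $s(M)$ is simple, so $\Ker(s)$ is a maximal ideal of $M$.

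I do not expect a genuine obstacle. Part (1) is essentially the observation that every $(R,1_R)$-state on a linearly ordered MV-algebra automatically satisfies the meet property, and part (2) is the short computation promoting a nonzero element of $J$ to the top of $s(M)$. The only points needing a little care are citing correctly that $M/I$ is a simple, hence linearly ordered, MV-algebra when $I$ is maximal and normal, and, in (2), discarding the degenerate case $R=\{0\}$ (where the hypothesis is vacuous but $\Ker(s)=M$), which is excluded since a strong unit is assumed strictly positive.
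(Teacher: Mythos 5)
Your proof is correct. Part (1) follows essentially the paper's route: both arguments reduce to the fact that $M/\Ker(s)$ is a linearly ordered MV-algebra (the paper cites \cite[Cor 3.5]{DvuS} for this directly, you pass through simplicity and Chang's representation), after which the meet condition of Proposition~\ref{pr:3.6} is automatic; the paper expresses linearity as ``$s(x\odot y^-)=0$ or $s(y\odot x^-)=0$'' and computes $s(x\wedge y)$ in $M$, whereas you observe $[x]\wedge[y]\in\{[x],[y]\}$ in the quotient and invoke Proposition~\ref{pr:3.6}(2); these are the same idea. Part (2) is where you genuinely diverge. The paper stays inside $M$: for any $x\notin\Ker(s)$ it uses the explicit description from \cite[Lem 3.4]{GeIo} of the ideal generated by $\Ker(s)$ and $x$, and shows every $z\in M$ lies in it via the decomposition $z=(n\odot x)\wedge z\oplus (n\odot x)^\sim\odot z$ together with $s\bigl((n\odot x)^\sim\odot z\bigr)=0$, the integer $n$ coming from the strong-unit hypothesis applied to $s(x)$. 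You instead pass to the image $s(M)\cong M/\Ker(s)$ inside $\Gamma(R,1_R)$ and prove it is simple by promoting a nonzero $c$ in an ideal to $1_R=(nc)\wedge 1_R$ via the $n$-fold $\oplus$. Your route trades the generated-ideal computation for the correspondence between ideals of $M$ containing the normal ideal $\Ker(s)$ and ideals of $M/\Ker(s)$ (standard, but worth stating carefully in the non-commutative setting, where one needs $a\le k\oplus(a\odot k^-)$ to see that the preimage of an ideal recovers the original ideal); the hypothesis on $R$ enters both arguments in exactly the same way. Your closing remark about the degenerate $R=\{0\}$ is a fair pedantic point that the paper leaves implicit.
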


\begin{proof}
(1) Assume $\Ker(s)$ is a  maximal ideal of $M$.
By [GeIo, (i) Prop 1.25], $x \odot y^- \wedge y\odot x^- = 0$ for all $x,y\in M$, that is $[x] \odot [y]^- \wedge [y]\odot [x]^- = [0]$ and due to \cite[Cor 3.5]{DvuS}, $M/\Ker(s)$ is an Archimedean linearly ordered MV-algebra  which entails either
 $s(x\odot y^-) = 0$ or $s(y\odot
x^-)=0$. In the first case we have $0 = s(x \odot y^-) = s(x \odot
(x \wedge y)^-) = s(x) - s(x\wedge y)$. Similarly, $0 = s(y
\odot x^-)$ entails $s(y) - s(x \wedge y) = 0,$ i.e., $s(x\wedge
y) = \min\{s(x), s(y)\}= s(x)\wedge s(y)$, which by Proposition \ref{pr:3.6}
means that $s$ is an $(R,1_R)$-state-morphism.

(2) Let $(R,1_R)$ be a Riesz space such that every strictly positive element of $R$ is a strong unit; then $R$ is Archimedean. Let $s$ be an $(R,1_R)$-state-morphism on $M$ and let $x \in M$ be an element such that $s(x) \ne 0$.

Denote by $\Ker(s)_x$ the ideal of $M$ generated by $\Ker(s)$ and $x$. By \cite[Lem 3.4]{GeIo},
$\Ker(s)_x =\{y \in M:\ y \le n\odot x \oplus h$ for some $n \ge
1$ and some $h \in \Ker(s)\}$. Let $z$ be an arbitrary element of
$M$. Since $s(x)$ is a strong unit of $R$, there exists an integer $n \ge 1$ such that $s(z) < ns(x)$, so that $s(z) \le n\odot s(x)$. Then $s((n\odot x)^\sim \odot z ) = 0$. Since $z = (n\odot x) \wedge z \oplus ((n\odot x)\wedge z)^\sim \odot z= (n\odot x) \wedge z \oplus (n\odot x)^\sim \odot z \le n \odot x \oplus (n\odot x)^\sim \odot z$, it proves that $z \in \Ker(s)_x$, consequently, $M = \Ker(s)_x$ which shows that $\Ker(s)$ is a maximal ideal of $M$.
\end{proof}

We notify that (2) of the preceding proposition follows directly from Theorem \ref{th:3.13}. We have left here the proof of (2) only to present different used methods.

We note that if $s$ is an $(R,1_R)$-state-morphism and $(R,1_R)$ is not Archimedean, then $\Ker(s)$ is not necessarily maximal as the following example shows. In addition, it can happen that every $(R,1_R)$-state is an $(R,1_R)$-state-morphism but not every $(R,1_R)$-state-morphism is extremal.

\begin{example}\label{ex:3.8}
Let $M=\Gamma(\mathbb Z \lex \mathbb Z,(1,0))$, where $\mathbb Z$ is the group of integers, $R=\mathbb R \lex \mathbb R$ be the lexicographic product of the real line $\mathbb R$ with itself, and choose $1_R =(1,0)$. Then $R$ is a linearly ordered Riesz space that is not Archimedean, and every element of the form $(0,x)$, where $x>0$, is  strictly positive but no strong unit for $R$. The mapping $s: M\to [0,1_R]$ defined by $s(a,b)=(a,b)$ for $(a,b)\in M$ is an $(R,1_R)$-state-morphism and $\Ker(s)=\{(0,0)\}$ is an ideal that is not a maximal ideal of $M$ because it is properly contained in the maximal ideal $I=\{(0,n)\colon n \ge 0\}$ which is a unique maximal ideal of $M$.

In addition, $M$ has uncountably many $(R,1_R)$-states, any $(R,1_R)$-state on $M$ is an $(R,1_R)$-state-morphism, and there is a unique $(R,1_R)$-state having maximal kernel and it is a unique extremal $(R,1_R)$-state $M$.
\end{example}

\begin{proof}
Let $s$ be any $(R,1_R)$-state on $M$. Then $s(0,1)=(a,b)$ for some unique $(a,b)\in \Gamma(\mathbb R \lex \mathbb R,(1,0))$, where $a\ge 0$. Since $s(0,n)=(na,nb)\le (1,0)$, $n \ge 0$, we have $a=0$ and $b\ge 0$. Therefore, $s(1,-n)=(1,-nb)$. We denote this $(R,1_R)$-state by $s_b$.  Hence, there is a one-to-one correspondence between $(R,1_R)$-states on $M$ and the positive real axis $[0,\infty)$ given by $b\mapsto s_b$, $b \in [0,\infty)$. Then every $(R,1_R)$-state $s_b$ is an $(R,1_R)$-state-morphism, $\Ker(s_b)=\{(0,0)\}$ for $b >0$ which is an ideal of $M$ but not maximal, and only $s_0$ is an extremal $(R,1_R)$-state on $M$ and $\Ker(s_0)=\{(0,n)\colon n\ge 0\}$ is a maximal ideal.
\end{proof}

We note that in Example \ref{ex:3.18} and Proposition \ref{pr:3.19} we will show also cases of $(R,1_R)$-state-morphisms for an Archimedean Riesz space $(R,1_R)$ whose kernel is not maximal.

We remind the following result from \cite[Lem 4.4]{DvuS} which follows e.g. from \cite[Prop 7.2.5]{CDM}.

\begin{lemma}\label{le:3.10}
{\rm (i)} Let $G_1$ and $G_2$ be two subgroups of $(\mathbb R;+)$ each containing a common non-zero element $g_0$. If there is an injective group-homomorphism $\phi$ of $G_1$ into $G_2$ preserving the order
such that $\phi(g_0) = g_0$, then $G_1 \subseteq G_2$ and
$\phi$ is the identity on $G_1$. If, in addition, $\phi$ is
surjective, then $G_1 = G_2$.

{\rm (ii)} Let $M_1$ and $M_2$ be two MV-subalgebras of the
standard MV-algebra $[0,1]$. If there is an MV-isomorphism
$\psi$ from $M_1$ onto $M_2$, then $M_1 = M_2,$ and $\psi$ is the identity.
\end{lemma}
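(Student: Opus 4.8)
The plan is to handle the two parts in order: part~(i) is an elementary density argument inside $\mathbb{R}$, and part~(ii) is then obtained from part~(i) by passing to the unital $\ell$-groups generated by $M_1$ and $M_2$ via Mundici's $\Gamma$-correspondence. In a slogan: two isomorphic unital subgroups of $(\mathbb{R},+)$ sharing a distinguished unit are equal via the identity, and the MV-case is reduced to this.

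For part~(i), I would first note that, as a subgroup of $(\mathbb{R},+)$, $G_1$ is totally ordered, so an injective order-preserving group homomorphism $\phi$ on $G_1$ is automatically an order-embedding: $a\le b\iff\phi(a)\le\phi(b)$ for $a,b\in G_1$. Replacing $g_0$ by $-g_0$ if needed (which leaves $\phi(g_0)=g_0$ intact) we may assume $g_0>0$, and $\phi(g_0)=g_0$ gives $\phi(ng_0)=ng_0$ for all $n\in\mathbb{Z}$. Fix $g\in G_1$. For all $m\in\mathbb{Z}$ and all $n\ge 1$ we have $mg_0\le ng$ iff $mg_0=\phi(mg_0)\le\phi(ng)=n\phi(g)$, so the set $\{m/n:mg_0\le ng\}$ equals $\{m/n:mg_0\le n\phi(g)\}$. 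Since $g_0>0$, the first set is $\mathbb{Q}\cap(-\infty,g/g_0]$ and the second is $\mathbb{Q}\cap(-\infty,\phi(g)/g_0]$, so density of $\mathbb{Q}$ in $\mathbb{R}$ forces $g/g_0=\phi(g)/g_0$, i.e. $\phi(g)=g$. Thus $\phi=\mathrm{id}_{G_1}$, whence $G_1=\phi(G_1)\subseteq G_2$; and if $\phi$ is surjective, each $h\in G_2$ is of the form $\phi(g)=g$ with $g\in G_1$, so $G_1=G_2$.

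For part~(ii), view $M_1,M_2$ as MV-subalgebras of $\Gamma(\mathbb{R},1)=[0,1]$ and let $H_i\le(\mathbb{R},+)$ be the subgroup generated by $M_i$; then $1$ is a strong unit of $H_i$. By Mundici's $\Gamma$-equivalence between MV-algebras and unital $\ell$-groups (see \cite{Mun1,CDM}, cf. \cite[Prop~7.2.5]{CDM}) one has $\Gamma(H_i,1)=M_i$, and the MV-isomorphism $\psi\colon\Gamma(H_1,1)\to\Gamma(H_2,1)$ lifts to a unital $\ell$-group isomorphism $\phi\colon(H_1,1)\to(H_2,1)$ with $\phi|_{M_1}=\psi$; in particular $\phi$ is an injective, order-preserving group homomorphism with $\phi(1)=1$. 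Applying part~(i) with $G_1=H_1$, $G_2=H_2$, $g_0=1$ gives $\phi=\mathrm{id}_{H_1}$ and $H_1=H_2$. Restricting to $M_1$ yields $\psi=\mathrm{id}_{M_1}$, and since $\psi(M_1)=M_2$ we conclude $M_1=M_2$ and that $\psi$ is the identity.

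The only step that is not completely routine is the pair of facts $\Gamma(H_i,1)=M_i$ and the existence of the $\ell$-group lift $\phi$ of $\psi$; both are standard parts of Mundici's theory for Abelian unital $\ell$-groups. If one prefers to stay self-contained, one can instead extend $\psi$ by hand: every element of $H_i^{+}$ is a finite sum of elements of $M_i$ (using, among other things, that $b\le a$ in $[0,1]$ implies $a-b=a\odot b^{-}\in M_i$), and the resulting additive, monotone extension $\phi\colon H_1\to H_2$ of $\psi$ satisfies the hypotheses of part~(i), which again closes the argument. Everything else is elementary.
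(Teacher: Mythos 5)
Your proof is correct. Note that the paper itself gives no proof of this lemma: it is imported verbatim from \cite[Lem 4.4]{DvuS} with a pointer to \cite[Prop 7.2.5]{CDM}, so there is no in-paper argument to compare against. What you have written is precisely the standard argument that underlies that citation. Part (i) is the uniqueness half of H\"older's theorem, and your density computation is clean and complete: the upgrade from ``injective order-preserving'' to ``order-embedding'' correctly uses that $G_1$ is totally ordered, and the identification of $\{m/n : mg_0\le ng\}$ with $\mathbb{Q}\cap(-\infty,g/g_0]$ settles $\phi(g)=g$. Part (ii) correctly reduces to part (i) via Mundici's $\Gamma$-equivalence; the only two non-routine inputs are the equality $\Gamma(H_i,1)=M_i$ (i.e., that the subgroup of $\mathbb{R}$ generated by an MV-subalgebra of $[0,1]$ meets $[0,1]$ exactly in that subalgebra) and the fullness of $\Gamma$ needed to lift $\psi$ to a unital $\ell$-group isomorphism. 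You flag both, cite the right sources, and sketch a workable self-contained substitute (writing every positive element of $H_i$ as a finite sum of elements of $M_i$ via a Riesz-decomposition argument inside the MV-algebra, using $a-b=a\odot b^-$ for $b\le a$). If you wanted the argument fully self-contained you would need to verify that this hand-built extension of $\psi$ is well defined on representations $\sum a_i=\sum a_j'$, but as a proof resting on the standard $\Gamma$-machinery --- the same machinery the paper's citation invokes --- it is complete.
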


\begin{theorem}\label{th:3.11}
Let $(R,1_R)$ be an Archimedean unital Riesz space. Let $s_1,s_2$ be two $(R,1_R)$-state-morphisms on a pseudo MV-algebra $M$ such that their kernels are maximal ideals of $M$ and $\Ker(s_1) = \Ker(s_2)$. Then
$s_1 = s_2$.
\end{theorem}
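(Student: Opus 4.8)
The plan is to push the whole problem down to the common quotient $M/\Ker(s_1)=M/\Ker(s_2)$ and there exploit that a linearly ordered MV-algebra carries essentially only one state, converting the $R$-valued situation into a real-valued one by the Yosida representation.

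Write $I:=\Ker(s_1)=\Ker(s_2)$. By Proposition~\ref{pr:3.2}(viii) the set $I$ is a normal ideal of $M$, and it is maximal by hypothesis; hence, exactly as in the proof of Proposition~\ref{pr:3.7}(1) (via \cite[Cor 3.5]{DvuS}), $N:=M/I$ is an Archimedean linearly ordered MV-algebra. By Proposition~\ref{pr:3.6}(2) the induced mappings $\tilde s_1,\tilde s_2\colon N\to\Gamma(R,1_R)$ are $(R,1_R)$-state-morphisms, and $\tilde s_i([x])=s_i(x)$ for every $x\in M$; so it suffices to prove $\tilde s_1=\tilde s_2$. This is the one place where the hypothesis $\Ker(s_1)=\Ker(s_2)$ really enters: it forces $s_1$ and $s_2$ to factor through one and the same algebra $N$, so that they become two homomorphisms out of a single linearly ordered MV-algebra.

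Now apply the Yosida Theorem~\ref{th:Yos} to the Archimedean unital Riesz space $(R,1_R)$: embed $R$ as a Riesz subspace of $C(T)$ for a suitable nonempty compact Hausdorff space $T$, with $1_R$ going to the constant function $1_T$. For each $t\in T$ the evaluation $\omega_t\colon\Gamma(R,1_R)\to[0,1]$, $\omega_t(r)=r(t)$, is a state-morphism on $\Gamma(R,1_R)$, just as in the proof of Proposition~\ref{pr:3.5}(2). Hence $\omega_t\circ\tilde s_1$ and $\omega_t\circ\tilde s_2$ are states on $N$ for every $t\in T$. Since $N$ is linearly ordered, \cite[Thm 5.5]{DvuS} tells us $N$ admits at most one state, and therefore $\omega_t\circ\tilde s_1=\omega_t\circ\tilde s_2$ for every $t\in T$. (Equivalently, after composing with a fixed isomorphism $j$ of $N$ onto an MV-subalgebra of $[0,1]$ and lifting to unital $\ell$-groups, Lemma~\ref{le:3.10}(i) forces each of $\omega_t\circ\tilde s_i\circ j^{-1}$ to be the inclusion, which yields the same conclusion.)

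It remains to conclude. Fix $[a]\in N$; then $\tilde s_1([a])$ and $\tilde s_2([a])$ are elements of $[0,1_R]\subseteq R\subseteq C(T)$ with $\tilde s_1([a])(t)=\omega_t(\tilde s_1([a]))=\omega_t(\tilde s_2([a]))=\tilde s_2([a])(t)$ for all $t\in T$, so $\tilde s_1([a])=\tilde s_2([a])$ in $C(T)$ and hence in $R$, the Yosida embedding being injective. Thus $\tilde s_1=\tilde s_2$, and so $s_1=s_2$. I do not expect a serious obstacle here; the two points needing care are checking that $M/I$ is genuinely linearly ordered (so that the uniqueness-of-state input is available) and using injectivity of the Yosida embedding to upgrade ``agreement at every $t$'' to genuine equality in $R$.
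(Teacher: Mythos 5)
Your proof is correct, and it rests on the same backbone as the paper's argument: embed $(R,1_R)$ into $(C(T),1_T)$ by the Yosida Theorem and compose with the evaluation functionals at the points of $T$, thereby reducing the $R$-valued uniqueness statement to a uniqueness result for ordinary real-valued states from \cite{DvuS}. The difference lies in how that reduction is organized and which uniqueness input is invoked. The paper stays upstairs on $M$: it notes that each $\hat s^i_t=s^i_t\circ\phi\circ s_i$ is a state-morphism on $M$ whose kernel, being a maximal ideal containing the maximal ideal $\Ker(s_i)$, must equal $\Ker(s_1)=\Ker(s_2)$, and then applies the fact that two state-morphisms with the same maximal kernel coincide (\cite[Prop 4.5]{DvuS}). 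You instead quotient first by the common kernel $I$, use that $M/I$ is an Archimedean linearly ordered MV-algebra, and appeal to the uniqueness of the state on a linearly ordered pseudo MV-algebra (\cite[Thm 5.5]{DvuS}). The two uniqueness inputs are close relatives and neither route is materially shorter; your version has the small expository advantage of isolating exactly where the hypothesis $\Ker(s_1)=\Ker(s_2)$ enters (it forces both maps to factor through one and the same quotient), while the paper's version avoids passing to the quotient at all.
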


\begin{proof}
Since $R$ is an Archimedean Riesz space with a strong unit $1_R$, due to the Yosida Representation Theorem, Theorem \ref{th:Yos}, there is a compact Hausdorff topological space $T\ne \emptyset$ and an injective homomorphism of Riesz spaces $\phi:R \to C(T)$ with $\phi(1_R)=1_T$, where $(C(T),1_T)$ is the unital Riesz space of continuous real-valued functions on $T$. Then $M_i:=s_i(M)$ are MV-subalgebras of the MV-algebra $\Gamma(R,1_R)$ for $i=1,2$. Define a mapping $s^i_t: \phi(M_i)\to [0,1]$ for each $t \in T$ by $s^i_t(\phi(s_i(x)))= (\phi( s_i(x)))(t)$ for each $t \in T$. Then the mapping $\hat s^i_t:=s^i_t\circ \phi \circ s_i$ is a state-morphism on $M$, and by \cite[Prop 4.3]{DvuS}, each $\Ker(\hat s^i_t)$ is a maximal ideal of $M$. Since $\Ker(s_i)=\bigcap_{t \in T} \Ker(\hat s^i_t)$ and $\Ker(s_i)\subseteq \Ker(\hat s^i_t)$ are also  maximal ideals of $M$, we conclude that $\Ker(\hat s^1_t)=\Ker(s_1)=\Ker(s_2)= \Ker(\hat s^2_t)$ for each $t \in T$ which by \cite[Prop 4.5]{DvuS} means that $s^1_t=s^2_t$ for each $t \in T$. Then $\phi(s_1(x))(t)=\phi(s_2(x))(t)$, $t \in T$, i.e. $s_1 = s_2$.
\end{proof}

We note that if $(R,1_R)$ is not Archimedean, then Theorem \ref{th:3.11} is not necessarily valid, see Example \ref{ex:3.8}.

\begin{proposition}\label{pr:3.12}
Let $I$ be a maximal and normal ideal of a pseudo MV-algebra $M$ and let $(R,1_R)$ be a unital Riesz space. Then there is an $(R,1_R)$-state-morphism $s$ such that $\Ker(s)=I$. If, in addition, $R$ is Archimedean, there is a unique $(R,1_R)$-state-morphism $s$ such that $\Ker(s)=I$.
\end{proposition}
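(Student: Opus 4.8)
The plan is to construct $s$ as a composite of three structure‑preserving maps, then read off its kernel, and to obtain the uniqueness clause (in the Archimedean case) directly from Theorem \ref{th:3.11}.

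For existence, I would start from the observation that, $I$ being normal, the quotient $M/I$ is again a pseudo MV-algebra, and that, $I$ being moreover maximal, \cite[Cor 3.5]{DvuS} guarantees $M/I$ is a simple, Archimedean, linearly ordered MV-algebra; it is nontrivial because $0\ne 1$ in $M$. By H\"{o}lder's theorem for MV-algebras (a simple MV-algebra is isomorphic to a nontrivial MV-subalgebra of the standard MV-algebra $\Gamma(\mathbb R,1)=[0,1]$; see e.g. \cite[Prop 7.2.5]{CDM}) there is an injective homomorphism of MV-algebras $h\colon M/I\to \Gamma(\mathbb R,1)$. Next, since $R$ is a real vector space and $1_R$ is a strong unit of $R$, the assignment $t\mapsto t\,1_R$ is an injective homomorphism of unital $\ell$-groups $(\mathbb R,1)\to (R,1_R)$, so applying the functor $\Gamma$ turns it into an injective homomorphism of MV-algebras $k\colon \Gamma(\mathbb R,1)\to \Gamma(R,1_R)$. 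Writing $\pi_I\colon M\to M/I$ for the canonical surjection, I would then put
$$ s:=k\circ h\circ \pi_I\colon M\longrightarrow \Gamma(R,1_R), $$
which, as a composite of homomorphisms of pseudo MV-algebras into $\Gamma(R,1_R)$, is an $(R,1_R)$-state-morphism by definition. Since $h$ and $k$ are injective, $s(x)=0$ is equivalent to $h(\pi_I(x))=0$, hence to $\pi_I(x)=0$, hence to $x\in I$; thus $\Ker(s)=I$, as required.

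For the uniqueness clause, suppose $R$ is Archimedean and let $s_1,s_2$ be $(R,1_R)$-state-morphisms on $M$ with $\Ker(s_1)=\Ker(s_2)=I$. Since $I$ is a maximal ideal, both kernels are maximal ideals of $M$, so Theorem \ref{th:3.11} applies verbatim and yields $s_1=s_2$.

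I expect the only point of genuine substance to be the very first step, namely that $M/I$ is an Archimedean linearly ordered (equivalently, simple) MV-algebra, which is exactly the content of \cite[Cor 3.5]{DvuS}; once that is in hand, the remainder is bookkeeping — that composites of pseudo MV-homomorphisms are again pseudo MV-homomorphisms, and that kernels behave well under injective maps. The Archimedean hypothesis is used only for uniqueness, and only because Theorem \ref{th:3.11} needs it, that theorem itself failing without the Archimedean assumption (cf. Example \ref{ex:3.8}).
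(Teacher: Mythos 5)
Your proof is correct and is essentially the paper's own argument: the paper also quotients by $I$, identifies the Archimedean linearly ordered MV-algebra $M/I$ with an MV-subalgebra of $\Gamma(\mathbb R,1)$, defines $s(x)=(x/I)1_R$ (your composite $k\circ h\circ\pi_I$ written out), reads off $\Ker(s)=I$, and invokes Theorem \ref{th:3.11} for uniqueness in the Archimedean case. No gaps; your spelling out of injectivity of $h$ and $k$ for the kernel computation is exactly the bookkeeping the paper leaves implicit.
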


\begin{proof}
Since $M/I$ is by \cite[Prop 3.4]{DvuS} an Archimedean linearly ordered MV-algebra, it is isomorphic by Lemma \ref{le:3.10} to a unique MV-subalgebra of $\Gamma(\mathbb R,1)$; identify it with its image in $\mathbb R$. Define a mapping $s: M \to R$ as follows: $s(x)= x/I1_R$, $x \in M$. Then $s$ is an $(R,1_R)$-state-morphism on $M$ such that $\Ker(s)=I$.

Let, in addition, $R$ be an Archimedean Riesz space. By Theorem \ref{th:3.11}, if $s'$ is another $(R,1_R)$-state-morphism on $M$ with $\Ker(s')=I$, then $s=s'$.
\end{proof}

The following result says about a one-to-one correspondence between $(R,1_R)$-states and states on pseudo MV-algebras for a special kind of Archimedean unital Riesz spaces.

\begin{theorem}\label{th:3.13}
Let $(R,1_R)$ be a unital Riesz space such that every strictly positive element of $R$ is a strong unit for $R$. Given a state $m$ on $M$, the mapping
$$s(x):=m(x)1_R, \quad x \in M,
$$ is an $(R,1_R)$-state on $M$.

Conversely, if $s$ is an $(R,1_R)$-state on a pseudo MV-algebra $M$, there is a unique state $m_s$ on $M$ such
$$
s(x):=m_s(x)1_R,\quad x \in M.
$$
The mapping $s\mapsto m_s$ is a bijective affine mapping from $\mathcal S(M)$ onto $\mathcal S(M,R,1_R)$.

In addition, the following statements are equivalent:
\begin{enumerate}

\item[{\rm (i)}] $s$ is an extremal $(R,1_R)$-state on $M$.

\item[{\rm  (ii)}] $s$ is an $(R,1_R)$-state-morphism on $M$.

\item[{\rm  (iii)}] $s(x\wedge y) =  s(x)\wedge s(y),\ x,y \in M$.
\item[{\rm  (iv)}] $s$ is an $(R,1_R)$-state-morphism on $M$ if an only if $m_s$ is a state-morphism on $M$.
\end{enumerate}
\end{theorem}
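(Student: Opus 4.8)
The key observation is that the hypothesis on $(R,1_R)$ is so restrictive that it forces $(R,1_R)\cong(\mathbb R,1)$; once this is established, the statement becomes a translation of known facts about real-valued states. So the first step is to prove: \emph{if every strictly positive element of $R$ is a strong unit, then $c\mapsto c1_R$ is an isomorphism of unital Riesz spaces from $(\mathbb R,1)$ onto $(R,1_R)$.} I would do this in two moves. (a) $R$ is Archimedean: if $na\le b$ for all $n\ge1$ then $n\,a^+\le b^+$ for all $n$, and were $a^+\ne0$ the element $a^+$ would be a strong unit, giving $b^+\le m\,a^+$ for some $m$ and hence $(m+1)a^+\le m\,a^+$, i.e. $a^+\le0$, a contradiction; thus $a^+=0$. (b) Every $a\in R$ equals $c1_R$ for some $c\in\mathbb R$: put $c^\ast:=\sup\{c\in\mathbb R: c1_R\le a\}$; this set is nonempty and bounded above because $1_R$ is a strong unit (applied to $-a$ and to $a$), so $c^\ast$ is finite, and set $b:=a-c^\ast1_R$. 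From $(c^\ast-\varepsilon)1_R\le a$ for every $\varepsilon>0$ and Archimedeanicity one gets $b\ge0$, and if $b\ne0$ then $b$, being strictly positive, is a strong unit, so $(1/n)1_R\le b$ for some $n$, whence $(c^\ast+1/n)1_R\le a$, contradicting the definition of $c^\ast$; therefore $b=0$. The map $c\mapsto c1_R$ is then clearly linear, bijective and order-preserving in both directions, hence an isomorphism of unital Riesz spaces, and it restricts to an isomorphism of MV-algebras $[0,1]=\Gamma(\mathbb R,1)\cong\Gamma(R,1_R)$.

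Granting this, the first assertion is routine: for a state $m$ on $M$ the map $s(x):=m(x)1_R$ takes values in $[0,1_R]$, satisfies $s(1)=1_R$, and is additive because $m$ is and $c\mapsto c1_R$ is additive. Conversely, given an $(R,1_R)$-state $s$, each value $s(x)$ lies in $[0,1_R]\subseteq R=\mathbb R1_R$, so $s(x)=m_s(x)1_R$ for a unique $m_s(x)\in[0,1]$; additivity and normalization of $s$ together with injectivity of $c\mapsto c1_R$ make $m_s$ a state on $M$, and $s\mapsto m_s$ is two-sided inverse to $m\mapsto m(\cdot)1_R$. Affinity is immediate: from $s=\lambda s_1+(1-\lambda)s_2$ one reads off $m_s(x)1_R=(\lambda m_{s_1}(x)+(1-\lambda)m_{s_2}(x))1_R$, hence $m_s=\lambda m_{s_1}+(1-\lambda)m_{s_2}$. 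So $s\mapsto m_s$ is an affine bijection of $\mathcal S(M,R,1_R)$ onto $\mathcal S(M)$ (both empty exactly when $M$ has no normal maximal ideal, by Proposition~\ref{pr:3.5}).

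For the equivalences: (ii)$\Leftrightarrow$(iii) is Proposition~\ref{pr:3.6}(1). Since $s=m_s(\cdot)1_R$ and $c\mapsto c1_R$ is an MV-algebra isomorphism $[0,1]\cong\Gamma(R,1_R)$, the map $s$ is a homomorphism of pseudo MV-algebras into $\Gamma(R,1_R)$ if and only if $m_s$ is one into $[0,1]$; this gives (ii)$\Leftrightarrow$(iv). Finally, as $s\mapsto m_s$ is an affine bijection between the convex sets of $(R,1_R)$-states and of states, $s$ is extremal iff $m_s$ is extremal, and by the one-to-one correspondence between extremal states and state-morphisms on pseudo MV-algebras recorded earlier (from \cite{DvuS}), $m_s$ is extremal iff it is a state-morphism; combining with (ii)$\Leftrightarrow$(iv) yields (i)$\Leftrightarrow$(ii). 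The only substantive step is the reduction $(R,1_R)\cong(\mathbb R,1)$ in the first paragraph; I expect no real obstacle after that, only the routine verification that the displayed maps are mutually inverse and affine.
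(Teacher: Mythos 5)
Your proposal is correct, but it takes a genuinely different route at the decisive step. The paper does not reduce matters to $(\mathbb R,1)$ up front; instead it first observes via \cite[Lem 14.1]{Goo} that the hypothesis makes $R$ a simple $\ell$-group, deduces from Proposition \ref{pr:3.7}(1) and Proposition \ref{pr:3.12} that $\Gamma(R,1_R)$ carries a unique state (a state-morphism with trivial kernel), and then, given an arbitrary $(R,1_R)$-state $s$, embeds $(R,1_R)$ into $(C(T),1_T)$ by the Yosida Theorem \ref{th:Yos} and composes with the evaluation states $s_t$; the uniqueness of the state on $\Gamma(\phi(R),1_T)$ forces all the maps $s_t\circ\phi\circ s$ to coincide, so $\phi(s(x))$ is a constant function and $s(x)=m_s(x)1_R$, after which the equivalences (i)--(iv) are read off from the affine bijection together with \cite[Prop 4.7]{DvuS}. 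You instead prove directly and elementarily that the hypothesis forces $(R,1_R)\cong(\mathbb R,1)$: your Archimedeanicity argument (using $na\le b\Rightarrow na^+\le b^+$, valid in a Riesz space since positive scalar multiplication is a lattice isomorphism) and your cut argument $c^\ast=\sup\{c\colon c1_R\le a\}$ are both sound, and the remaining verifications (bijectivity, affinity, the equivalences via Proposition \ref{pr:3.6} and \cite[Prop 4.7]{DvuS}) match the paper's. Interestingly, the paper states exactly your reduction $(R,1_R)\cong(\mathbb R,1)$ in the remark immediately after the theorem, but justifies it there again through the Yosida representation (the maximal-ideal space being a singleton) rather than by a bare-hands Hölder-type argument. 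What your route buys is self-containedness: no appeal to Yosida or to the simplicity lemma, only the definitions of strong unit and Archimedeanicity. What the paper's route buys is uniformity with the rest of the article, where the Yosida embedding and the evaluation maps $s_t$ are the standing technique (Theorems \ref{th:3.11}, \ref{th:3.17}, Proposition \ref{pr:3.15}), so the proof recycles machinery already in place instead of introducing a separate classification argument.
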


\begin{proof}
First, we characterize Riesz spaces from the assumptions of the theorem:
Since every strictly positive element of $R$ is a strong unit, by \cite[Lem 14.1]{Goo}, $R$ is a simple $\ell$-group, i.e., the only $\ell$-ideals of $R$ are $\{0\}$ and $R$. Therefore, the ideal $\{0\}$ is a unique maximal ideal of $\Gamma(R,1_R)$, so that by Proposition \ref{pr:3.7}(1) and Proposition \ref{pr:3.12}, there is a unique state $\mu$ on $\Gamma(R,1_R)$, it is a state-morphism as well as an extremal state, so that, $\Ker(\mu)=\{0\}$ and $\mu(a)=\mu(b)$ for $a,b \in \Gamma(R,1_R)$ if and only if $a=b$.

Now, let $m$ be a state on $M$. Then the mapping $s(x):=m(x)1_R$, $x \in M$, is trivially an $(R,1_R)$-state on $M$. This is true for each unital Riesz space.

Conversely, let $s$ be an  arbitrary $(R,1_R)$-state on $M$. By the Yosida Theorem \ref{th:Yos}, there is a compact Hausdorff topological space $T\ne \emptyset$ such that the unital Riesz space $(R,1_R)$ can be injectively embedded into the unital Riesz space $(C(T),1_T)$ of continuous functions on $T$ as its Riesz subspace. If $\phi$ is this embedding, then $(\phi(R),1_R)$ is a unital Riesz space whose every strictly positive element is a strong unit for $\phi(R)$.

For any $t\in T$, let us define $s_t:\Gamma(\phi(R),1_T)\to [0,1]$ by $s_t(f):= f(t)$, $f \in \Gamma(\phi(R),1_T)$. Then each $s_t$ is a state-morphism on $\Gamma(\phi(R),1_T)$. Define a mapping $\hat s_t: M \to [0,1]$ as $\hat s_t= s_t \circ \phi\circ s$. Since every strictly positive element of $\phi(R)$ is a strong unit for it, by the above first note from the beginning of our proof, we conclude that $s_t=s_{t'}$ for all $t,t'\in T$. Hence, $\hat s_t=\hat s_{t'}$. Thus we denote by $\hat s=\hat s_{t_0}$ for an arbitrary $t_0\in T$. Consequently, $\hat s(x)$ is a constant function on $T$ for each $x \in M$, and the range of $\hat s$ is a linearly ordered set, therefore, the range of $s(M)$ is a linearly ordered set in $R$.

If we define $m_s(x):=s_{t_0}(\phi(s(x)))$, $x \in M$, then $m_s$ is a state on $M$. Consequently, $s(x):=m_s(x)1_R$, $x \in M$.

Now it is clear that the mapping $\Phi: \mathcal S(M,R,1_R)\to S(M)$ defined by $\Phi(s):= m_s$, $s \in \mathcal S(M,R,1_R)$, is a bijective affine mapping.

Due to this bijective affine mapping $\Phi$, we see that in view of \cite[Prop 4.7]{DvuS}, statements (i)--(iv) are mutually equivalent.
\end{proof}

We note that the results of the precedent theorem are not surprising because if $(R,1_R)$ is a unital Riesz space, whose every strictly positive element is a strong unit for $R$, then $(R,1_R)\cong (\mathbb R,1)$. Indeed, since every $\ell$-ideal of $R$ is also a Riesz ideal of $R$ and vice versa (this is true for each Riesz space), then $\{0\}$ is a unique proper Riesz ideal of $R$, see \cite[Lem 14.1]{Goo}, therefore, it is maximal, and since $R$ is Archimedean, by the proof of the Yosida Representation Theorem, see \cite[Chap 45]{LuZa}, $T$ is in fact the set of maximal Riesz ideals of $(R,1_R)$ which is topologized by the hull-kernel topology and in our case, $T$ is a singleton. Consequently, $(R,1_R)\cong (\mathbb R,1)$, and we can apply \cite[Prop 4.7]{DvuS}.

In the next proposition we show that if every strictly positive element of a unital $\ell$-group $(G,u)$ is a strong unit for $G$ and $(R,1_R)$ is Archimedean, then $M=\Gamma(G,u)$ possesses a unique $(R,1_R)$-state.

\begin{proposition}\label{pr:3.14}
Let $M=\Gamma(G,u)$ and every non-zero element of $M$ be a strong unit for $G$. Then every $(R,1_R)$-state $s$ on $M$ is an $(R,1_R)$-state-morphism for every unital Riesz space $(R,1_R)$, $\Ker(s)$ is a maximal ideal of $M$, $M$ is an MV-algebra, and $\mathcal S(M,R,1_R)\ne \emptyset$. If, in addition, $(R,1_R)$ is an Archimedean unital Riesz space, then $|\mathcal S(M,R,1_R)|=1$.
\end{proposition}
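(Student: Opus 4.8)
The plan is to show that the standing hypothesis forces the ideal lattice of $M$ to be trivial, and then to let the already established results of this section finish the job.

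First I would check that $\{0\}$ and $M$ are the only ideals of $M$. Let $I$ be a nonzero ideal and pick $0\ne a\in I$. Since $a\le 1$ and $a$ is a strong unit of $G$, there is an integer $n\ge 1$ with $1\le na$ in $G$; a routine induction from $x\oplus y=(x+y)\wedge 1$ shows that the $n$-fold sum $a\oplus\cdots\oplus a$ equals $(na)\wedge 1=1$, hence $1\in I$ and, as ideals are downward closed, $I=M$. Consequently $\{0\}$ is the unique maximal ideal of $M$, and it is normal, since the zero ideal always is. By \cite[Cor 3.5]{DvuS}, $M\cong M/\{0\}$ is an MV-algebra, and then $\mathcal S(M,R,1_R)\ne\emptyset$ follows from Proposition \ref{pr:3.4} (applied to the MV-algebra $M$) or from Proposition \ref{pr:3.5} (since $M$ has a normal maximal ideal).

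Now let $s$ be an arbitrary $(R,1_R)$-state on $M$. By Proposition \ref{pr:3.2}(viii) its kernel $\Ker(s)$ is an ideal of $M$, and it is a proper one because $s(1)=1_R\ne 0$; hence $\Ker(s)=\{0\}$, which we have just seen is a maximal ideal. Proposition \ref{pr:3.7}(1) then yields that $s$ is an $(R,1_R)$-state-morphism. This disposes of the first three assertions. Finally, assume in addition that $(R,1_R)$ is Archimedean and take $s_1,s_2\in\mathcal S(M,R,1_R)$. By the preceding paragraph both are $(R,1_R)$-state-morphisms with $\Ker(s_1)=\{0\}=\Ker(s_2)$, a maximal ideal of $M$, so Theorem \ref{th:3.11} forces $s_1=s_2$; together with $\mathcal S(M,R,1_R)\ne\emptyset$ this gives $|\mathcal S(M,R,1_R)|=1$. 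The only step that calls for an actual argument is the collapse of the ideal lattice in the first paragraph; everything after that is an application of Propositions \ref{pr:3.2}, \ref{pr:3.4}, \ref{pr:3.5}, \ref{pr:3.7} and Theorem \ref{th:3.11}, so I do not anticipate a real obstacle.
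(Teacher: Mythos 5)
Your proof is correct, and it reaches the same skeleton as the paper's (kernel of every $(R,1_R)$-state equals $\{0\}$, which is maximal, so Proposition \ref{pr:3.7}(1) gives the state-morphism property and Theorem \ref{th:3.11} gives uniqueness in the Archimedean case), but you handle the preliminaries differently. The paper first argues that $M$ is Archimedean, invokes \cite[Thm 4.2]{Dvu1} to get commutativity (so $M$ is an MV-algebra and $G$ is Abelian), and then uses \cite[Lem 14.1]{Goo} to get simplicity of $G$, hence that $\{0\}$ is the unique maximal ideal; nonemptiness of $\mathcal S(M,R,1_R)$ then comes from Proposition \ref{pr:3.4}. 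You instead collapse the ideal lattice of $M$ by a direct computation inside the pseudo MV-algebra: for $0\ne a\in I$ the $n$-fold sum $a\oplus\cdots\oplus a=(na)\wedge u$ (valid in any $\Gamma(G,u)$ by translation-invariance of $\wedge$), which equals $1$ once $u\le na$, so $I=M$; then you get the MV-property of $M$ from \cite[Cor 3.5]{DvuS} applied to the normal maximal ideal $\{0\}$, rather than from Archimedeanicity. Your route is more elementary and self-contained at that step, while the paper's route records en passant the structural facts (Archimedeanicity of $M$, simplicity of $G$) it reuses elsewhere. A second, minor difference: for uniqueness the paper forms the midpoint state $\tfrac12 s_1+\tfrac12 s_2$ before applying Theorem \ref{th:3.11}, whereas you apply Theorem \ref{th:3.11} to $s_1,s_2$ directly; since you have already shown every $(R,1_R)$-state is a state-morphism with kernel $\{0\}$, the midpoint detour is indeed unnecessary.
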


\begin{proof}
The hypotheses imply that $M$ is Archimedean, and by \cite[Thm 4.2]{Dvu1}, $M$ is an MV-algebra. Whence, every strictly positive element of $G$ is a strong unit of $G$, where $(G,u)$ is a unital $\ell$-group such that $M\cong \Gamma(G,u)$. Therefore, $(G,u)$ is an Abelian unital $\ell$-group. By \cite[Lem 14.1]{Goo}, $G$ is a simple $\ell$-group, so that $G$ has a unique proper $\ell$-ideal, namely the zero $\ell$-ideal. Then $\{0\}$ is a unique maximal $\ell$-ideal of $M$. Hence, by Proposition \ref{pr:3.4}, $M$ possesses an $(R,1_R)$-state for every unital Riesz space $(R,1_R)$.

Let $s$ be an $(R,1_R)$-state on $M$. Since $\Ker(s)\ne M$, $\Ker(s)=\{0\}$, and hence $\Ker(s)$ is a maximal ideal of $M$. By Proposition \ref{pr:3.7}(1), $s$ is an $(R,1_R)$-state-morphism on $M$.

Now let $(R,1_R)$ be an Archimedean unital Riesz space. Let $s_1$ and $s_2$ be $(R,1_R)$-states on $M$.  Then $s:=1/2s_1+1/2s_2$ is also an $(R,1_R)$-state on $M$ and in view of the first part of the present proof, $s,s_1,s_2$ are $(R,1_R)$-state-morphisms such that $\Ker(s)=\Ker(s_1)=\Ker(s_2)=\{0\}$ are maximal ideals of $M$, which by Theorem \ref{th:3.11} yields $s_1=s=s_2$. In particular, the unique $(R,1_R)$-state on $M$ is extremal.
\end{proof}

\begin{proposition}\label{pr:3.15}
Let $T$ be a non-void set and $R=C_b(T)$ be the Riesz space of bounded real-valued functions on $T$, and let $1_T$ be the constant function $1_T(t)=1$, $t \in T$. Then $1_R=1_T$ is a strong unit for $R$. Let $s$ be an $(R,1_R)$-state on a pseudo MV-algebra $M$. The following statements are equivalent:
\begin{enumerate}

\item[{\rm (i)}] $s$ is an extremal $(R,1_R)$-state on $M$.

\item[{\rm  (ii)}] $s$ is an $(R,1_R)$-state-morphism on $M$.

\item[{\rm  (iii)}] $s(x\wedge y) =  s(x)\wedge s(y),\ x,y \in M$.
\end{enumerate}
In addition,
\begin{equation}\label{eq:part}
\mathcal S_\partial(M,R,1_R)=\mathcal{SM}(M,R,1_R).
\end{equation}
\end{proposition}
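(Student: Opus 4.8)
The plan is to transfer everything to the classical real-valued theory of \cite{DvuS} via the evaluation functionals on $C_b(T)$. First observe that (ii) $\Leftrightarrow$ (iii) is exactly Proposition \ref{pr:3.6}(1), and that the displayed identity \eqref{eq:part} is merely the set-theoretic reformulation of (i) $\Leftrightarrow$ (ii); so the real content is the chain (i) $\Leftrightarrow$ (ii) $\Leftrightarrow$ (iii).

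For each $t \in T$ the evaluation $\delta_t \colon C_b(T) \to \mathbb R$, $f \mapsto f(t)$, is a homomorphism of Riesz spaces with $\delta_t(1_T) = 1$, hence it restricts to an MV-homomorphism $\Gamma(C_b(T),1_T) \to \Gamma(\mathbb R,1) = [0,1]$. Therefore, for an $(R,1_R)$-state $s$ on $M$ the composite $\hat s_t := \delta_t \circ s \colon M \to [0,1]$ is an ordinary state on $M$ for every $t$, and since $s(x)(t) = \hat s_t(x)$ for all $x \in M$ and $t \in T$, the $(R,1_R)$-state $s$ is completely recovered from the family $\{\hat s_t \colon t \in T\}$. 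Conversely, any family $\{m_t \colon t \in T\}$ of states on $M$ defines an $(R,1_R)$-state on $M$ by $x \mapsto (t \mapsto m_t(x))$, the values lying automatically in $[0,1_T]$ since each $m_t(x) \in [0,1]$. Thus $s \mapsto \{\hat s_t\}_{t \in T}$ is an affine bijection of $\mathcal S(M,R,1_R)$ onto the set $\mathcal S(M)^{T}$ of all $T$-indexed families of states on $M$. Under this bijection condition (iii) becomes the requirement that $\hat s_t(x \wedge y) = \hat s_t(x) \wedge \hat s_t(y)$ for every $t \in T$, which by Proposition \ref{pr:3.6}(1) applied with $(R,1_R) = (\mathbb R,1)$ (equivalently \cite[Prop 4.3]{DvuS}) means that each $\hat s_t$ is a state-morphism on $M$, and by \cite[Prop 4.7]{DvuS} that each $\hat s_t$ is an extremal state on $M$.

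It then remains to show that $s$ is an extremal $(R,1_R)$-state if and only if every $\hat s_t$ is an extremal state on $M$. For the ``only if'' direction I argue by contraposition: if some $\hat s_{t_0} = \lambda m_1 + (1-\lambda) m_2$ with states $m_1 \ne m_2$ and $\lambda \in (0,1)$, then the two $(R,1_R)$-states $s_1, s_2$ corresponding to the families that coincide with $\{\hat s_t\}$ off $t_0$ and take the values $m_1$, respectively $m_2$, at $t_0$ are distinct and satisfy $s = \lambda s_1 + (1-\lambda) s_2$, so $s$ is not extremal. For the ``if'' direction, a decomposition $s = \lambda s' + (1-\lambda) s''$ in $\mathcal S(M,R,1_R)$ gives, for each $t$, $\hat s_t = \lambda (\delta_t \circ s') + (1-\lambda)(\delta_t \circ s'')$; extremality of $\hat s_t$ forces $\delta_t \circ s' = \delta_t \circ s''$ for all $t$, hence $s' = s''$. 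Concatenating the equivalences, (i) $\Leftrightarrow$ (every $\hat s_t$ extremal) $\Leftrightarrow$ (every $\hat s_t$ a state-morphism) $\Leftrightarrow$ (iii) $\Leftrightarrow$ (ii), and \eqref{eq:part} is immediate.

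The only step needing a little care is the bookkeeping around the bijection $\mathcal S(M,R,1_R) \cong \mathcal S(M)^{T}$, in particular verifying that the coordinatewise modifications of $s$ used in the extremality argument are genuine $(R,1_R)$-states and are indeed distinct; once this is settled, the whole statement is a routine transfer of the known real-valued facts of \cite{DvuS} through the evaluation maps $\delta_t$.
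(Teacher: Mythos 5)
Your proof is correct and follows essentially the same route as the paper: both reduce to the real-valued theory of \cite{DvuS} via the evaluation maps $f\mapsto f(t)$, prove that extremality of $s$ is equivalent to extremality of every coordinate state $\hat s_t$ (single-coordinate modification in one direction, coordinatewise decomposition plus \cite[Prop 4.7]{DvuS} in the other), and settle (ii) $\Leftrightarrow$ (iii) by Proposition \ref{pr:3.6}. Your explicit identification $\mathcal S(M,R,1_R)\cong \mathcal S(M)^T$, valid precisely because $C_b(T)$ consists of \emph{all} bounded functions, is just a tidy repackaging of the paper's argument rather than a different method.
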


\begin{proof}
(i) $\Rightarrow$ (ii). Let $s$ be an extremal $(R,1_R)$-state
on $M$. If we define an MV-algebra $\Gamma(R,1_R)$, then every mapping $s_t: \Gamma(R,1_R)\to [0,1]$ defined by $s_t(f)=f(t)$, $f \in \Gamma(R,1_R)$, is a state-morphism on $\Gamma(R,1_R)$. We assert that $s_t\circ s$ is an extremal state on $M$ for each $t\in T$. Indeed, let $m_1^t,m_2^t$ be states on $M$ and $\lambda \in (0,1)$ such that $s_t \circ s= \lambda m_1^t +(1-\lambda)m_2^t$. Define a function $m_i: M\to [0,1_R]$ such that $(m_i(x))(t)=m_i^t(x)$ for $t \in T$ where $x \in M$ and $i=1,2$. Then $m_1(x),m_2(x) \in C_b(T)$ for each $x \in M$, so that $m_1,m_2$ are $(R,1_R)$-states on $M$ such that $s=\lambda m_1 + (1-\lambda)m_2$. Since $s$ is extremal, $s=s_1=s_2$ which gives $s_t\circ s= m_1^t=m_2^t$ for each $t\in T$, and $s_t \circ s$ is an extremal state on $M$.

By \cite[Prop 4.7]{DvuS}, $s_t\circ s$ is a state-morphism on $M$, therefore,  $s_t(s(x\wedge y))= \min\{s_t(s(x)),s_t(s(y))\}$, that is $s(x\wedge y) =s(x)\wedge s(y)$ for all $x,y \in M$ and $s$ is an $(R,1_R)$-state morphism, see Proposition \ref{pr:3.6}.

(ii) $\Rightarrow$ (i). Let $s$ be an $(R,1_R)$-state-morphism on $M$ and let $s_1,s_2$ be $(R,1_R)$-states on $M$ such that $s = \lambda s_1 +(1-\lambda)s_2$ for some $\lambda \in (0,1)$. Let $s_t$ be a state-morphism from the foregoing implication for every $t \in T$. Then $s_t\circ s= \lambda s_t\circ s_1 +(1-\lambda)s_t\circ s_2$. Applying \cite[Prop 4.7]{DvuS}, we have $s_t \circ s$ is an extremal state because $s_t\circ s$ is a state-morphism on $M$. Therefore, $s_t\circ s= s_t \circ s_1 = s_t \circ s_2$ for each $t\in T$ which in other words means that $s=s_1=s_2$, that is, $s$ is an extremal $(R,1_R)$-state on $M$.

The equivalence of (ii) and (iii) was established in Proposition \ref{pr:3.6}. Equation (\ref{eq:part}) follows from the equivalence of (i) and (ii).
\end{proof}

\begin{corollary}\label{co:3.16}
Let $(R,1_R)=(\mathbb R^n,1_{\mathbb R^n})$, $n\ge 1$, where $1_\mathbb R =(1,\ldots,1)$, and let $s$ be an $(R,1_R)$-state on a pseudo MV-algebra $M$. The following statements are equivalent:
\begin{enumerate}

\item[{\rm (i)}] $s$ is an extremal $(R,1_R)$-state on $M$.

\item[{\rm  (ii)}] $s$ is an $(R,1_R)$-state-morphism on $M$.

\item[{\rm  (iii)}] $s(x\wedge y) =  s(x)\wedge s(y),\ x,y \in M$.
\end{enumerate}
Moreover, {\rm (\ref{eq:part})} holds.
\end{corollary}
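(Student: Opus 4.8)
The plan is to recognize $(\mathbb R^n,1_{\mathbb R^n})$ as a special case of the unital Riesz space treated in Proposition~\ref{pr:3.15} and then quote that proposition directly. Put $T=\{1,2,\dots,n\}$ equipped with the discrete topology; since $T$ is finite, it is a compact Hausdorff space, and every real-valued function on $T$ is automatically bounded, so $C_b(T)$ consists of all functions $T\to\mathbb R$, i.e. of all $n$-tuples of reals. The assignment $f\mapsto (f(1),\dots,f(n))$ is then an isomorphism of Riesz spaces from $C_b(T)$ onto $\mathbb R^n$ (the pointwise order on functions corresponds to the coordinatewise order on $\mathbb R^n$), and it sends the constant function $1_T$ to $1_{\mathbb R^n}=(1,\dots,1)$. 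Hence $(\mathbb R^n,1_{\mathbb R^n})\cong (C_b(T),1_T)$ as unital Riesz spaces.

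With this identification in hand, the equivalence of (i), (ii), (iii) and the equality \eqref{eq:part} follow immediately by applying Proposition~\ref{pr:3.15} to $R=C_b(T)$ with $T=\{1,\dots,n\}$: an $(R,1_R)$-state on $M$ is extremal iff it is an $(R,1_R)$-state-morphism iff it satisfies $s(x\wedge y)=s(x)\wedge s(y)$ for all $x,y\in M$, and $\mathcal S_\partial(M,R,1_R)=\mathcal{SM}(M,R,1_R)$. The only point worth spelling out is that the isomorphism of unital Riesz spaces transports $(R,1_R)$-states, $(R,1_R)$-state-morphisms, extremal $(R,1_R)$-states and convex combinations faithfully, so that all four notions are preserved; this is routine since the isomorphism is in particular an isomorphism of the associated MV-algebras $\Gamma(C_b(T),1_T)\cong\Gamma(\mathbb R^n,1_{\mathbb R^n})$ and is affine.

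I do not expect a genuine obstacle here; the whole content is the observation that a finite index set makes $\mathbb R^n$ literally an instance of $C_b(T)$. If one prefers to avoid invoking Proposition~\ref{pr:3.15} as a black box, one can instead rerun its proof verbatim with the states $s_t:\Gamma(\mathbb R^n,1_{\mathbb R^n})\to[0,1]$, $t\in\{1,\dots,n\}$, given by the coordinate projections $s_t(f)=f_t$, each of which is a state-morphism on $\Gamma(\mathbb R^n,1_{\mathbb R^n})$; the argument that $s_t\circ s$ is an extremal state (equivalently a state-morphism by \cite[Prop 4.7]{DvuS}) for every $t$, combined with the fact that a map into $\mathbb R^n$ is determined by its $n$ coordinates, yields all three equivalences and \eqref{eq:part} exactly as before.
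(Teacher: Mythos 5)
Your proof is correct and is essentially identical to the paper's own one-line argument: the paper also proves the corollary by observing that $(\mathbb R^n,1_{\mathbb R^n})\cong (C_b(T),1_T)$ with $|T|=n$ and invoking Proposition~\ref{pr:3.15}. Your additional verification that the isomorphism transports states, state-morphisms and convexity is a harmless (and welcome) elaboration of the same route.
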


\begin{proof}
It follows from Proposition \ref{pr:3.15} because $(\mathbb R^n,1_{\mathbb R^n})\cong (C_b(T),1_T)$, where $|T|=n$.
\end{proof}

The latter result extends a characterization of state-morphisms and extremal states from \cite[Prop 4.7]{DvuS}, because if $T$ is a singleton, then $(C(T),1_T)$ corresponds in fact to $(\mathbb R,1)$.

\begin{theorem}\label{th:3.17}
Let $(R,1_R)$ be an Archimedean unital Riesz space, $M$ a pseudo MV-algebra, and $s,s_1,s_2$ be $(R,1_R)$-states on $M$.

{\rm (1)} If $s$ is an $(R,1_R)$-state-morphism on $M$, then $s$ is an extremal $(R,1_R)$-state.

{\rm(2)} If $s$ is an $(R,1_R)$-state such that $\Ker(s)$ is a maximal ideal, then $s$ is an $(R,1_R)$-state-morphism and an extremal $(R,1_R)$-state on $M$ as well.

{\rm (3)} Let $s_1, s_2$ be  $(R,1_R)$-states on $M$ such that $\Ker(s_1)=
\Ker(s_2)$ and $\Ker(s_1)$ is a maximal ideal. Then $s_1$ and $s_2$ are $(R,1_R)$-state-morphisms and extremal $(R,1_R)$-states such that $s_1=s_2$.

{\rm (4)} Let $s$ be an $(R,1_R)$-state on $M$ such that $M_s=M/\Ker(s)$ is linearly ordered. Then $s$ is an $(R,1_R)$-state-morphism and an extremal $(R,1_R)$-state, and $\Ker(s)$ is a maximal ideal of $M$.
\end{theorem}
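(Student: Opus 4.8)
The plan is to prove part~(1) first — it carries all the weight — and then deduce (2), (3), (4) from it together with results already in hand.

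For (1), I would argue exactly as in the proof of Theorem~\ref{th:3.11}, transporting extremality from the scalar case along a Yosida embedding. Since $R$ is Archimedean, Theorem~\ref{th:Yos} gives a nonempty compact Hausdorff space $T$ and an injective homomorphism of Riesz spaces $\phi\colon R\to C(T)$ with $\phi(1_R)=1_T$; as $\phi$ preserves the unit it restricts to an MV-homomorphism $\Gamma(R,1_R)\to\Gamma(C(T),1_T)$, and for each $t\in T$ the evaluation $\varepsilon_t\colon\Gamma(C(T),1_T)\to[0,1]$, $\varepsilon_t(f)=f(t)$, is a state-morphism. Now let $s$ be an $(R,1_R)$-state-morphism and suppose $s=\lambda s_1+(1-\lambda)s_2$ with $s_1,s_2\in\mathcal S(M,R,1_R)$ and $\lambda\in(0,1)$. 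For each $t\in T$ the composite $\hat s_t:=\varepsilon_t\circ\phi\circ s\colon M\to[0,1]$ is an MV-homomorphism, i.e.\ a state-morphism on $M$, so by \cite[Prop 4.7]{DvuS} it is an extremal state. On the other hand $\varepsilon_t\circ\phi$ is a state on the MV-algebra $\Gamma(R,1_R)$, so $\varepsilon_t\circ\phi\circ s_i$ is an ordinary state on $M$ for $i=1,2$ (here one uses $s_i(x)+s_i(y)\le 1_R$ whenever $x+y$ exists, so that $s_i(x)+s_i(y)$ is a defined partial sum of $\Gamma(R,1_R)$ which $\varepsilon_t\circ\phi$ preserves), and $\hat s_t=\lambda(\varepsilon_t\circ\phi\circ s_1)+(1-\lambda)(\varepsilon_t\circ\phi\circ s_2)$. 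Extremality of $\hat s_t$ forces $\varepsilon_t\circ\phi\circ s_1=\varepsilon_t\circ\phi\circ s_2$ for every $t\in T$, that is, $\phi(s_1(x))(t)=\phi(s_2(x))(t)$ for all $x\in M$ and all $t\in T$; injectivity of $\phi$ then gives $s_1=s_2$, so $s$ is extremal.

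Parts (2) and (3) are then immediate. For (2): if $\Ker(s)$ is a maximal ideal, Proposition~\ref{pr:3.7}(1) says $s$ is an $(R,1_R)$-state-morphism, and (1) upgrades it to an extremal $(R,1_R)$-state. For (3): if $\Ker(s_1)=\Ker(s_2)$ is a maximal ideal, then by (2) both $s_1$ and $s_2$ are $(R,1_R)$-state-morphisms and extremal, and since their (equal) kernels are maximal, Theorem~\ref{th:3.11} forces $s_1=s_2$. For (4) the task is to show $\Ker(s)$ is maximal, after which (2) finishes everything: by Proposition~\ref{pr:3.2}(xii) the quotient $M_s=M/\Ker(s)$ is an Archimedean MV-algebra, and it is a chain by hypothesis. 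An Archimedean linearly ordered MV-algebra is simple: if $[x]\ne[0]$, Archimedeanicity together with the total order produces, for any $[y]\in M_s$, an integer $n$ with $[y]\le n\odot[x]$, so the ideal of $M_s$ generated by $[x]$ is all of $M_s$; thus $\{[0]\}$ and $M_s$ are the only ideals of $M_s$, i.e.\ $\Ker(s)$ is a maximal ideal of $M$. (Alternatively, Proposition~\ref{pr:3.2}(ix),(xi) show that the induced $(R,1_R)$-state $\tilde s$ on the chain $M_s$ is injective and order preserving, hence preserves $\wedge=\min$, so $\tilde s$, and therefore $s$, is an $(R,1_R)$-state-morphism by Proposition~\ref{pr:3.6}; maximality of $\Ker(s)$ still comes from simplicity of $M_s$.)

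The main obstacle is (1); once it is in place, (2)--(4) are short applications of Propositions~\ref{pr:3.7}, \ref{pr:3.2}, \ref{pr:3.6} and Theorem~\ref{th:3.11}. Inside (1) the Yosida embedding and the appeal to \cite[Prop 4.7]{DvuS} are routine — the same device underlies Theorem~\ref{th:3.11} and Proposition~\ref{pr:3.15} — and the one point genuinely requiring attention is the verification that each real-valued composite $\varepsilon_t\circ\phi\circ s_i$ is a state on $M$, so that the extremality of the state-morphism $\hat s_t$ may legitimately be applied to the convex decomposition $s=\lambda s_1+(1-\lambda)s_2$.
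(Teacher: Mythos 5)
Your proposal is correct and takes essentially the same route as the paper: part (1) is proved exactly as in the paper (Yosida embedding $\phi$, evaluation states $s_t$, extremality of the scalar state-morphisms $s_t\circ\phi\circ s$ via \cite[Prop 4.7]{DvuS}, then injectivity of $\phi$), and (2), (3) are the same short deductions from Proposition \ref{pr:3.7}(1), part (1), and Theorem \ref{th:3.11}. The only deviation is in (4), where you obtain maximality of $\Ker(s)$ directly from simplicity of the Archimedean chain $M/\Ker(s)$ (or, in your alternative, get the state-morphism property from monotonicity on a chain via Proposition \ref{pr:3.6}) and then invoke (2), whereas the paper identifies $M/\Ker(s)$ with some $\Gamma(\mathbb R_0,1)$ and pins $s$ down through Proposition \ref{pr:3.14}; both variants rest on the same underlying fact that an Archimedean linearly ordered MV-algebra is simple, so the difference is cosmetic.
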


\begin{proof}
(1) Due to the Yosida Representation Theorem \ref{th:Yos}, there is a compact Hausdorff topological space $T \ne \emptyset$ such that $(R,1_R)$ can be embedded into $(C(T),1_T)$ as its Riesz subspace; let $\phi$ be the embedding. For each $t\in T$, the function $s_t:\Gamma(C(T),1_T)\to [0,1]$ defined by $s_t(f):=f(t)$, $f \in C(T)$, is a state-morphism on $\Gamma(C(T),1_T)$ for each $t \in T$. Then the mapping $m_t:=s_t\circ \phi\circ s$ is a state-morphism on $M$, so that by \cite[Prop 4.7]{DvuS}, $m_t$ is an extremal state on $M$.  Let $s=\lambda s_1 +(1-\lambda)s_2$, where $s_1,s_2$ are $(R,1_R)$-states on $M$ and $\lambda \in (0,1)$.
Then
$$
m_t= s_t \circ \phi\circ s = \lambda s_t\circ \phi\circ s_1 +(1-\lambda)\circ \phi s_2,
$$
which implies $m_t= s_t\circ \phi\circ s_1=s_t\circ \phi\circ s_2$ for each $t \in T$. Hence, $\phi (s(x))=\phi(s_1(x))=\phi(s_2(x))$ for every $x \in M$, that is $s(x)=s_1(x)=s_2(x)$, and finally $s=s_1 =s_2$.

(2) Let $s$ be an $(R,1_R)$-state on $M$ such that $\Ker(s)$ is a maximal ideal. By Proposition \ref{pr:3.7}, $s$ is an $(R,1_R)$-state-morphism which by the first part of the present proof entails $s$ is extremal.

(3) Let $s_1, s_2$ be  $(R,1_R)$-states on $M$ such that $\Ker(s_1)=
\Ker(s_2)$ and $\Ker(s_1)$ is a maximal ideal. By (2), $s_1$ and $s_2$ are extremal, and by Proposition \ref{pr:3.7}, $s_1$ and $s_2$ are also $(R,1_R)$-state-morphisms on $M$, so that Theorem \ref{th:3.11} implies $s_1=s_2$.

(4) Assume $M_s$ is linearly ordered. By Proposition \ref{pr:3.2}(xii), $M_s$ is an MV-subalgebra of the standard MV-algebra $\Gamma(\mathbb R,1)$ of the real line $R$. Then, for the $(R,1_R)$-state $\tilde s$ on $M/\Ker(s)=M_s$ induced by $s$, we have $\Ker(\tilde s)=\{0\}$ is a maximal ideal of $M_s$ and $\Ker(s)$ is a maximal ideal of $M$. Hence, there is a subgroup $\mathbb R_0$ of $\mathbb R$ such that $1 \in \mathbb R_0$ and $M_s \cong \Gamma(\mathbb R_0,1)$. Then there is a unique state-morphism (= extremal state in this case) $m$ on $M_s$. The mapping $s_0(x):= m(x)1_R$ for each $x \in M$ is an $(R,1_R)$-state-morphism on $M$. Since $\mathbb R_0$ is a simple $\ell$-group, applying Proposition \ref{pr:3.14}, we have $s=s_0$ and $s$ is an $(R,1_R)$-state-morphism and an extremal $(R,1_R)$-state on $M$, as well.
\end{proof}

By \cite[Prop 4.3]{DvuS}, a state $s$ on a pseudo MV-algebra is a state-morphism iff $\Ker(s)$ is a maximal ideal. In  the following example and proposition we show that it can happen that an $(R,1_R)$-state-morphism $s$, consequently an extremal $(R,1_R)$-state, has the kernel $\Ker(s)$ that is not maximal even for an Archimedean Riesz space $(R,1_R)$. We note that in Example \ref{ex:3.8} we had an analogous counterexample for a non-Archimedean Riesz space.

\begin{example}\label{ex:3.18}
There are a pseudo MV-algebra $M$, an Archimedean unital Riesz space $(R,1_R)$, and an $(R,1_R)$-state-morphism $s$ on $M$, consequently an extremal $(R,1_R)$-state, such that $\Ker(s)$ is not maximal.
\end{example}

\begin{proof}
Let $(R,1_R)=(\mathbb R^n,1_{\mathbb R^n})$, $M=\Gamma(\mathbb R^n,1_{\mathbb R^n})$ and $s:M \to [0,1_{\mathbb R^n}]$ be such that $s(x)=x$, $x \in M$. Then $s$ is an $(R,1_R)$-state-morphism, and by Theorem \ref{th:3.17}(1) it is an extremal $(R,1_R)$-state. Because $\Ker(s)=\{0\}$, then $\Ker(s)$ is a maximal ideal of $M$ iff $n=1$.
\end{proof}

According to \cite{GeIo}, we say that an element $e$ of a pseudo MV-algebra $M$ is {\it Boolean} if $e\wedge e^-=0$, equivalently, $e\wedge e^\sim =0$, equivalently $e\oplus e= e$. Let $B(M)$ be the set of Boolean elements, then $B(M)$ is a Boolean algebra that is an MV-algebra and subalgebra of $M$, and $e^-=e^\sim$; we put $e'=e^-$.  If $s$ is an $(R,1_R)$-state-morphism and $e$ is a Boolean element of $M$, then $s(e)$ is a Boolean element of the MV-algebra $\Gamma(R,1_R)$. We recall that for a sequence $(a_i)_{i=1}^n$ of Boolean elements of $M$ we have that it is summable iff $a_i\wedge a_j=0$ for $i\ne j$; in such a case, $a_1+\cdots +a_n = a_1 \vee \cdots \vee a_n$. If we say that for an ordered finite system $(a_1,\ldots,a_n)$ of Boolean elements of $M$ we assume that $a_1+\cdots +a_n=1$, it can happen that some of $a_i$ are zeros.

We exhibit the latter example in more details. We note that the MV-algebra $\Gamma(\mathbb R^n,1_{\mathbb R^n})$ has each ideal $I$ of $M$ of the form $I=I_1\times\cdots\times I_n$, where $I_i\in \{\{0\},\mathbb R\}$ for each $i=1,\ldots,n$. In particular, all maximal ideals are of the form $I_1\times \cdots\times I_n$ where exactly one $I_i=\{0\}$ and $I_j=\mathbb R$ for $j\ne i$.

\begin{proposition}\label{pr:3.19}
Let $M=\Gamma(\mathbb R^n,1_{\mathbb R^n})$, $(R,1_R)= (\mathbb R^n,1_{\mathbb R^n})$, and let $B(M)$ be the set of Boolean elements of $M$.

{\rm (1)} Let $(a_1,\ldots,a_n)$ be an $n$-tuple of summable elements of $B(M)$ such $\sum_{i=1}^n a_i= 1$. Then the mapping
\begin{equation}\label{eq:R-state0}
s(x)=x_1a_1+\cdots +x_na_n, \quad x=(x_1,\ldots,x_n)\in M,
\end{equation}
is both an $(\mathbb R^n,1_{\mathbb R^n})$-state-morphism and an extremal $(\mathbb R^n,1_{\mathbb R^n})$-state on $M$ as well. Conversely, each $(\mathbb R^n,1_{\mathbb R^n})$-state-morphism on $M$ can be obtained in this way.

{\rm (2)} If $\sigma$ is an arbitrary  permutation of the set $\{1,\ldots,n\}$, then $s_\sigma(x_1,\ldots,x_n)= (x_{\sigma(1)},\ldots,x_{\sigma(n)})$, $x=(x_1,\ldots,x_n)\in M$, is an $(R,1_R)$-state-morphism such that $\Ker(s_\sigma)=\{0\}$.
If $s_0(x)=x$, $x \in M$, $s_0$ is an $(R,1_R)$-state-morphism on $M$ corresponding to the identical permutation. Conversely, every $(\mathbb R^n,1_{\mathbb R^n})$-state-morphism $s$ on $M$ such that $\Ker(s)=\{0\}$ can be obtained in this way.

In addition,
$$
\mathcal S(M,\mathbb R, 1_\mathbb R)=\mathcal S_\partial (M,\mathbb R,1_\mathbb R)= \{s_0\}=\mathcal{SM}(M,\mathbb R,1_\mathbb R).$$

{\rm (3)} For each $i=1,\ldots,n$, let $\pi_i$ be the $i$-th projection from $M$ onto $[0,1]$. Then each $s_i$, where $s_i(x)=\pi_i(x)1_{\mathbb R^n}$, $x \in M$, is an $(\mathbb R^n,1_{\mathbb R^n})$-state-morphism such that $\Ker(s_i)$ is a maximal ideal of $M$, and conversely, every $(\mathbb R^n,1_{\mathbb R^n})$-state-morphism on $M$ whose kernel is a maximal ideal of $M$ is of this form.

{\rm (4)} $$|\mathcal S_\partial(M,\mathbb R^n,1_{\mathbb R^n})|=n^n=| \mathcal {SM}(M,\mathbb R^n,1_{\mathbb R^n})|$$
and
$$\mathcal S(\mathbb R^n,1_{\mathbb R^n})=\Con(\mathcal S_\partial(\mathbb R^n,1_{\mathbb R^n}))=\Con(\mathcal {SM}(\mathbb R^n,1_{\mathbb R^n})),$$
where $\Con$ denotes the convex hull.

\end{proposition}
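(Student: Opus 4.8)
The plan is to compute everything coordinatewise. First I would fix the dictionary: $M=\Gamma(\mathbb R^n,1_{\mathbb R^n})=[0,1]^n$ with all pseudo MV-operations taken componentwise (so $M$ is in fact an MV-algebra), $B(M)=\{0,1\}^n$, and the atoms of $B(M)$ are the unit vectors $\varepsilon_1,\dots,\varepsilon_n$, where $\varepsilon_i$ has $i$-th coordinate $1$ and all others $0$. A summable $n$-tuple $(a_1,\dots,a_n)$ of Boolean elements with $a_1+\cdots+a_n=1$ is then nothing but a family of pairwise disjoint elements of $\{0,1\}^n$ whose join is the all-ones vector; equivalently it is a function $g\colon\{1,\dots,n\}\to\{1,\dots,n\}$ where $g(j)$ is the unique index $i$ with $(a_i)_j=1$. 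Under this identification the map $s$ of \eqref{eq:R-state0} is simply $s(x)_j=x_{g(j)}$ for every $j$, and there are exactly $n^n$ such $g$.

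For (1) I would argue both directions from this. Since the MV-operations on the domain $[0,1]^n$ and on $\Gamma(\mathbb R^n,1_{\mathbb R^n})=[0,1]^n$ are componentwise and $x\mapsto (x_{g(j)})_j$ merely selects coordinates, $s$ preserves $\oplus$, $^-$, $^\sim$, $0$ and sends $1$ to $1_{\mathbb R^n}$, hence is an $(\mathbb R^n,1_{\mathbb R^n})$-state-morphism; extremality is then free from Theorem \ref{th:3.17}(1), $\mathbb R^n$ being Archimedean. Conversely, given any $(\mathbb R^n,1_{\mathbb R^n})$-state-morphism $s$ on $M$ I would set $a_i:=s(\varepsilon_i)$: from $\varepsilon_i\oplus\varepsilon_i=\varepsilon_i$ each $a_i$ is Boolean in $\Gamma(\mathbb R^n,1_{\mathbb R^n})$; from $\varepsilon_i\wedge\varepsilon_j=0$ ($i\ne j$) and the fact that a state-morphism preserves $\wedge$ (Proposition \ref{pr:3.6}) the $a_i$ are pairwise disjoint; and $\sum_i a_i=s(\sum_i\varepsilon_i)=s(1)=1_{\mathbb R^n}$. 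Writing $x=\sum_i x_i\varepsilon_i$ in $M$ — a legitimate partial sum since the $\varepsilon_i$ have disjoint supports — additivity together with the homogeneity $s(tx)=ts(x)$ for $t\in[0,1]$ established just before Proposition \ref{pr:3.2} gives $s(x)=\sum_i x_i s(\varepsilon_i)=\sum_i x_i a_i$, which is the form \eqref{eq:R-state0}.

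Parts (2) and (3) are then the two extreme instances of this. For (2): $s(x)=\sum_i x_i a_i$ vanishes precisely when $x_i=0$ for every $i$ with $a_i\ne 0$, so $\Ker(s)=\{0\}$ iff all $a_i\ne 0$; but $n$ nonzero pairwise disjoint Booleans whose join is the all-ones vector have supports of total size $n$, hence each is a single atom, so the tuple is a permutation of $(\varepsilon_1,\dots,\varepsilon_n)$ and $s=s_\sigma$ for a unique permutation $\sigma$, with $\sigma=\mathrm{id}$ giving $s_0(x)=x$. For (3): the maximal ideals of $[0,1]^n$ are exactly the coordinate hyperplanes $\{x:x_i=0\}$, and for $s$ as in \eqref{eq:R-state0} one gets $\Ker(s)=\{x:x_i=0\text{ for all }i\in g(\{1,\dots,n\})\}$, which is a maximal ideal iff $g$ is constant, say $g\equiv i$, and then $s=s_i$ with $s_i(x)=\pi_i(x)1_{\mathbb R^n}$. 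The various state-space equalities stated along the way (including the coincidence $\mathcal S_\partial=\mathcal{SM}$ used in (4)) I would simply cite from Corollary \ref{co:3.16} together with the coordinatewise description above.

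Finally for (4): part (1) identifies $\mathcal{SM}(M,\mathbb R^n,1_{\mathbb R^n})$ with the set of the $n^n$ functions $g$, and Corollary \ref{co:3.16} gives $\mathcal S_\partial(M,\mathbb R^n,1_{\mathbb R^n})=\mathcal{SM}(M,\mathbb R^n,1_{\mathbb R^n})$, so the first display follows. For the convex-hull statement I would rerun the converse computation of (1) for an \emph{arbitrary} $(\mathbb R^n,1_{\mathbb R^n})$-state (only additivity and homogeneity are used there): $s(x)=\sum_i x_i s(\varepsilon_i)$ with $s(\varepsilon_i)\in[0,1]^n$ and $\sum_i s(\varepsilon_i)=1_{\mathbb R^n}$. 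Setting $P_{ji}=s(\varepsilon_i)_j$, this is an affine bijection of $\mathcal S(M,\mathbb R^n,1_{\mathbb R^n})$ with the polytope of row-stochastic $n\times n$ matrices, i.e. the product of $n$ copies of the standard $(n-1)$-simplex; its vertices are the matrices whose rows are unit vectors, i.e. exactly the state-morphisms $s^{(g)}$, so every $(\mathbb R^n,1_{\mathbb R^n})$-state is a finite convex combination of state-morphisms, which is the claimed $\Con$ identity. The only step needing genuine care is the bookkeeping in the converse of (1) — checking that $\sum_i x_i\varepsilon_i$ is an admissible partial sum in $M$ and that, under the identification $\Gamma(\mathbb R^n,1_{\mathbb R^n})=[0,1]^n$, the target operation $\oplus$ is really componentwise; once that is pinned down everything else is routine.
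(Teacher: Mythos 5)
Your argument is correct, and for parts (1)--(3) it is essentially the paper's own coordinatewise proof: you take the atoms $\varepsilon_1,\dots,\varepsilon_n$ of $B(M)$, set $a_i=s(\varepsilon_i)$, and recover the form (\ref{eq:R-state0}); the only streamlining is that you cite the homogeneity $s(tx)=ts(x)$ established before Proposition \ref{pr:3.2} instead of re-deriving it with monotone rational sequences, and you verify the forward direction of (1) directly from the coordinate-selection description $s(x)_j=x_{g(j)}$ rather than through the criterion $s(x\wedge y)=s(x)\wedge s(y)$ of Corollary \ref{co:3.16} (both are fine, and extremality via Theorem \ref{th:3.17}(1) matches the paper's use of $\mathcal S_\partial=\mathcal{SM}$). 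The genuine divergence is in part (4). The paper introduces the weak convergence of $(\mathbb R^n,1_{\mathbb R^n})$-states, shows the state space is a compact convex Hausdorff set, applies the Krein--Mil'man theorem to place every state in the closed convex hull of the extremal states, and then removes the closure by a subnet argument on the coefficients, using that $\mathcal{SM}(M,\mathbb R^n,1_{\mathbb R^n})$ has only $n^n$ elements. You instead note that $s\mapsto (s(\varepsilon_1),\dots,s(\varepsilon_n))$, i.e. $P_{ji}=s(\varepsilon_i)_j$, is an affine bijection of $\mathcal S(M,\mathbb R^n,1_{\mathbb R^n})$ onto the polytope of row-stochastic $n\times n$ matrices (a product of $n$ standard simplices), whose vertices are exactly the $n^n$ state-morphisms, so every state is a finite convex combination of state-morphisms by finite-dimensional convexity alone; this also gives the cardinality count at once. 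Your route is more elementary and self-contained (no topology, no Krein--Mil'man) and even yields an explicit parametrization of the whole state space, while the paper's route has the advantage of rehearsing the weak-topology and compactness machinery that it reuses in Section 5 for general Dedekind ($\sigma$-)complete $(R,1_R)$. One small loose end, present in the paper as well: the displayed equality $\mathcal S(M,\mathbb R,1_{\mathbb R})=\mathcal S_\partial(M,\mathbb R,1_{\mathbb R})=\{s_0\}=\mathcal{SM}(M,\mathbb R,1_{\mathbb R})$ in (2) only makes sense for $n=1$, where your matrix picture (a single $1\times 1$ stochastic matrix) disposes of it trivially; it would be worth saying this explicitly.
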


\begin{proof}
Since the Archimedean unital Riesz space $(\mathbb R^n,1_{\mathbb R^n})$ satisfies the conditions of Corollary \ref{co:3.16}, we have that $\mathcal S_\partial (M,\mathbb R^n,1_{\mathbb R^n})= \mathcal{SM}(M,\mathbb R^n,1_{\mathbb R^n})$. For each $i=1,\ldots,n$, let $e_i$ be the vector of $M$ whose all coordinates are zeros, only at the $i$-th place there is $1$. The elements $e_1,\ldots,e_n$ are unique atoms of the Boolean algebra $B(M)$ and $e_1+\cdots + e_n =1$. The set of Boolean elements of $M$ has $2^n$ elements, and each Boolean element of $M$ is a vector $e\in M$ whose coordinates are only $0$ and $1$. Let $s_0(x)=x$, $x \in M$. If $n=1$, then $\mathcal S_\partial(M,\mathbb R,1_{\mathbb R})=\{s_0\}$.

(1) Let $(a_1,\ldots,a_n)$ be a summable sequence of Boolean elements of $M$
such that $a_1+\cdots +a_n=1$. We define a mapping $s$ by (\ref{eq:R-state0}). Then it is clear that $s$ is an $(\mathbb R^n,1_{\mathbb R^n})$-state on $M$. To show that $s$ is also an $(\mathbb R^n,1_{\mathbb R^n})$-state-morphism, we verify the criterion (iii) of Corollary \ref{co:3.16}. Let $x=(x_1,\ldots,x_n)\in M$ and $y=(y_1,\ldots,y_n)\in M$. Then $x\wedge y = (x_1\wedge y_1,\ldots, x_n\wedge y_n)$. Therefore,
\begin{eqnarray*}
s(x)\wedge s(y) &=& (x_1a_1+\cdots +x_na_n)\wedge (y_1a_1+\cdots +y_na_n)\\
&=& (x_1a_1\vee \cdots \vee x_na_n)\wedge (y_1a_1\vee\cdots \vee y_na_n)\\
&=& (x_1\wedge y_1)a_1 + \cdots +(x_n\wedge y_n)a_n\\
&=& s(x \wedge y),
\end{eqnarray*}
when we have used the fact that in $\ell$-groups if, for $a,b \in G^+$,  $a\wedge b=0$, then $a+b=a\vee b$. Consequently, $s$ is an $(\mathbb R^n,1_{\mathbb R^n})$-state-morphism on $M$.

Conversely, let $n\ge 2$ and let $s$ be an $(\mathbb R^n,1_{\mathbb R^n})$-state-morphism on $M$.
We define $f^s_i=s(e_i)$. For $i=1$, we have $s(1,0,\ldots,0)= s(n/n,0,\ldots,0)=ns(1/n,0,\ldots,0)$ so that $f^s_1/n= s(1/n,0,\ldots,0)$. Now let $0\le m\le n$. Then $s(m/n,0,\ldots,0)=ms(1/n,0,\ldots,0)= \frac{m}{n}f^s_1$. Now let $t$ be a real number from $(0,1)$. Passing to two monotone sequences of rational numbers $\{p_n\}\nearrow t$ and $\{q_n\}\searrow t$, we obtain $p_n s(1,0,\ldots,0)\le s(t,0,\ldots,0)\le q_ns(1,0,\ldots,0)$, so that $s(t,0,\ldots,0)=tf^s_1$. The same is true for each $i$, i.e. $s(0,\ldots,t,\ldots,0)=tf^s_i$. Hence, for each $x=(x_1,\ldots,x_n)\in M$, we have
\begin{equation}\label{eq:R-state}
s(x)=\sum_{i=1}^n x_if^s_i.
\end{equation}
From (\ref{eq:R-state}) we see that if we put $a_i=f^s_i$ for each $i=1,\ldots,n$, then we obtain formula (\ref{eq:R-state0}).

(2) Assume $\Ker(s)=\{0\}$. Then each $f^s_i\ne 0$ and all $f^s_i$'s are mutually different. Since $1_{\mathbb R^n}=f^s_1+\cdots+f^s_n$, we assert that each $f^s_i\in \{e_1,\ldots,e_n\}$. Indeed, if some $f^s_i$ has two coordinates equal $1$, then one of $f^s_j$ for $j\ne i$ has to be zero. Therefore, for each $f^s_i$, there is a unique $e_{j_i}$ such that $f^s_i=e_{j_i}$. This defines a permutation $\sigma$ such that $s=s_\sigma$. Conversely, each $s_\sigma$ is an $(\mathbb R^n,1_{\mathbb R^n})$-state-morphism, consequently an extremal $(\mathbb R^n,1_{\mathbb R^n})$-state whose kernel is $\{0\}$.

(3) Now we exhibit $(\mathbb R^n,1_{\mathbb R^n})$-state-morphisms $s$ whose kernel is a maximal ideal of $M$. It is easy to verify that every $s_1,\ldots,s_n$ defined in the proposition is an $(\mathbb R^n,1_{\mathbb R^n})$-state-morphism on $M$ whose kernel is a maximal ideal. Conversely, let $s$ be an $(\mathbb R^n,1_{\mathbb R^n})$-state-morphism on $M$ such that $\Ker(s)$ is a maximal ideal. From (\ref{eq:R-state0}) we conclude, that there is a unique $i=1,\ldots,n$ such that $a_i=1$ and $a_j=0$ for each $j\ne i$ which entails the result.

(4) Let $\tau: \{1,\ldots,n\}\to \{1,\ldots,n\}$, i.e. $\tau \in \{1,\ldots,n\}^{\{1,\ldots,n\}}$. Then the mapping $s_\tau:M \to [0,1_{\mathbb R^n}]$ defined by $s_\tau(x_1,\ldots,x_n)=(x_{\tau(1)},\ldots, x_{\tau(n)})$, $x=(x_1,\ldots,x_n) \in M$, is an $(\mathbb R^n,1_{\mathbb R^n})$-state-morphism on $M$. Since every $a_i$ for $i=1,\ldots,n$ is a finite sum of some Boolean elements of $e_1,\ldots,e_n$, using (\ref{eq:R-state0}), we see that every $(\mathbb R^n,1_{\mathbb R^n})$-state-morphism on $M$ is of this form.

Finally, we say that a net $\{s_\alpha\}_\alpha$ of $(\mathbb R^n,1_{\mathbb R^n})$-states on $M$ converges weakly to an $(\mathbb R^n,1_{\mathbb R^n})$-state $s$, and we write $\{s_\alpha\}_\alpha\stackrel{w} \rightarrow s$, if, for each $i=1,\ldots,n$,  $\lim_\alpha\pi_i(s_\alpha(x))=\pi_i(s(x))$ for each $x\in M$. Since $\pi_i\circ s$ is in fact a state on $M$, it is easy to see that we have the weak convergence of states on $M$ which gives a compact Hausdorff topology on the state space of $M$.  Whence if, for some net $\{s_\alpha\}_\alpha$ of $(\mathbb R^n,1_{\mathbb R^n})$-states, we have that there is, for each $i=1,\ldots,n$,  $\lim_\alpha \pi_i(s_\alpha(x))=s_i(x)$, $x \in M$, then every $s_i$ is a state on $M$, so that $s(x):=(s_1(x),\ldots,s_n(x))$, $x \in M$, is an $(\mathbb R^n,1_{\mathbb R^n})$-state such that $\{s_\alpha\}_\alpha\stackrel{w} \rightarrow s$. Consequently, the space $\mathcal S(M,\mathbb R^n,1_{\mathbb R^n})$ is a non-void convex compact Hausdorff space, so that by the Krein--Mil'man theorem, \cite[Thm 5.17]{Goo}, we see that every $(\mathbb R^n,1_{\mathbb R^n})$-state lies in the weak closure of the convex hull of $\mathcal S_\partial(M,\mathbb R^n,1_{\mathbb R^n})=\mathcal{SM}(M,\mathbb R^n,1_{\mathbb R^n})$. Since the space $\mathcal{SM}(M,\mathbb R^n,1_{\mathbb R^n})$ has exactly $n^n$ elements, let $\mathcal{SM}(M,\mathbb R^n,1_{\mathbb R^n})=\{s_1,\ldots,s_{n^n}\}$, so that every element $s$ of $\Con(\mathcal{SM}(M,\mathbb R^n,1_{\mathbb R^n}))$ is of the form $s =
\lambda_1 s_1+\cdots+\lambda_{n^n} s_{n^n}$, where each $\lambda_j \in [0,1]$ and $\lambda_1 +\cdots + \lambda_{n^n}$. Hence, there is a net $\{\lambda_1^\alpha s_1+\cdots+\lambda_{n^n}^\alpha\}_\alpha$ from the convex hull which converges weakly to the $(\mathbb R^n,1_{\mathbb R^n})$-state $s$. In addition, for each $i=1,\ldots,n^n$, there is a subnet $\{\lambda_i^{\alpha_\beta}\}_\beta$ of the net $\{\lambda_i^\alpha\}_\alpha$ such that $\lim_\beta \lambda_i^{\alpha_\beta}=\lambda_i$. Whence $\lambda_1+\cdots+\lambda_{n^n}=1$ and $s = \lambda_1 s_1+\cdots +\lambda_{n^n}\lambda_{n^n}$ which finishes the proof.
\end{proof}

A more general type of the weak convergence of $(R,1_R)$-states for a Dedekind $\sigma$-complete Riesz space will be studied in Proposition \ref{pr:conv2} below.

The latter proposition can be extended for $(\mathbb R^m,1_{\mathbb R^m})$-state-morphisms on the MV-algebra $M_n=\Gamma(\mathbb R^n,1_{\mathbb R^n})$ for all integers $m,n\ge 1$.

\begin{proposition}
Let $n,m\ge 1$ be integers and let $(a_1,\ldots,a_n)$ be a summable sequence of Boolean elements from $M_m=\Gamma(\mathbb R^m,1_{\mathbb R^m})$. Then the mapping $s(x):=x_1a_1+\cdots + x_na_n$, $x=(x_1,\ldots,x_n)\in M_n$, is an $(\mathbb R^m,1_{\mathbb R^m})$-state-morphism on $M_n$, and conversely, each $(\mathbb R^m,1_{\mathbb R^m})$-state-morphism on $M_n$ can be described in this way.

Equivalently, let $\tau$ be any mapping from $\{1,\ldots,m\}$ into $\{1,\ldots,n\}$. Then the mapping $s_\tau(x_1,\ldots,x_n)=(x_{\tau(1)},\ldots,x_{\tau(m)})$, $x=(x_1,\ldots,x_n)\in M_n$, is is an $(\mathbb R^m,1_{\mathbb R^m})$-state-morphism on $M_n$, and conversely, each $(\mathbb R^m,1_{\mathbb R^m})$-state-morphism on $M_n$ can be described in this way.

In particular, $|\mathcal{SM}(M_n,\mathbb R^m,1_{\mathbb R^m})|=n^m=|\mathcal S_\partial (M_n,\mathbb R^m,1_{\mathbb R^m})|$.

An $(\mathbb R^m,1_{\mathbb R^m})$-state-morphism $s$ on $M_n$ has maximal kernel if and only if there is $i=1,\ldots,n$ such that $s(x)=\pi_i(x)1_{\mathbb R^m}$, $x\in M_n$, where $\pi_i: \mathbb R^n \to \mathbb R$ is the $i$-th projection.

\end{proposition}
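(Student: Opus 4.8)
The plan is to reduce everything to the combinatorics of $M_m=\Gamma(\mathbb R^m,1_{\mathbb R^m})=[0,1]^m$ (where every operation is computed coordinatewise) and to argue as in the proof of Proposition~\ref{pr:3.19}. First I record the dictionary. The Boolean algebra $B(M_m)$ consists exactly of the $\{0,1\}$-vectors in $[0,1]^m$; a sequence $(a_1,\dots,a_n)$ of such vectors is summable iff the $a_i$ are pairwise disjoint, and the normalization $a_1+\dots+a_n=1_{\mathbb R^m}$ (which is exactly what is needed for $s(1_{M_n})=1_{\mathbb R^m}$, so it is part of the hypothesis, as in Proposition~\ref{pr:3.19}) says precisely that each coordinate $j\in\{1,\dots,m\}$ lies in the support of exactly one of the $a_i$. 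Thus normalized summable Boolean sequences correspond bijectively to maps $\tau\colon\{1,\dots,m\}\to\{1,\dots,n\}$ via $\tau(j)=i\iff (a_i)_j=1$; and for such $\tau$ the $j$-th coordinate of $s(x)=x_1a_1+\dots+x_na_n$ equals $\sum_i x_i(a_i)_j=x_{\tau(j)}$, so $s=s_\tau$. This already yields the claimed equivalence of the two descriptions, and that each $s_\tau$ is an $(\mathbb R^m,1_{\mathbb R^m})$-state-morphism is immediate: $s_\tau$ is a homomorphism $M_n\to\Gamma(\mathbb R^m,1_{\mathbb R^m})$ because every pseudo MV-operation of $M_n$ and of $M_m$ is computed coordinatewise, so copying coordinates respects them; equivalently, one verifies $s_\tau(x\wedge y)=s_\tau(x)\wedge s_\tau(y)$ coordinatewise and invokes Proposition~\ref{pr:3.6}.

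For the converse, let $s$ be an arbitrary $(\mathbb R^m,1_{\mathbb R^m})$-state-morphism on $M_n$ and let $e_1,\dots,e_n$ be the atoms of $B(M_n)$, i.e.\ the standard basis vectors of $\mathbb R^n$. Put $f_i:=s(e_i)$. Since a state-morphism carries Boolean elements to Boolean elements, each $f_i\in B(M_m)$; since $e_1+\dots+e_n=1_{M_n}$ and $s$ preserves the partial addition $+$, the sequence $(f_1,\dots,f_n)$ is summable with $f_1+\dots+f_n=s(1_{M_n})=1_{\mathbb R^m}$, which forces the $f_i$ to have pairwise disjoint supports partitioning $\{1,\dots,m\}$. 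Next, because $M_n=\Gamma(\mathbb R^n,1_{\mathbb R^n})$ lives inside the unital Riesz space $\mathbb R^n$, the remark recalled before Proposition~\ref{pr:3.2} gives $s(tx)=ts(x)$ for every $x\in M_n$ and every $t\in[0,1]$; applying this and the additivity of $s$ to the orthogonal decomposition $x=x_1e_1+\dots+x_ne_n$ (a genuine partial sum in $M_n$, the $x_ie_i$ having pairwise disjoint supports) yields $s(x)=\sum_i s(x_ie_i)=\sum_i x_if_i$. Hence $s$ has the form $(\ref{eq:R-state0})$ with $a_i=f_i$; by the dictionary of the first paragraph this is also the form $s=s_\tau$ for a unique $\tau$, so both descriptions are exhaustive.

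For the cardinalities, the assignment $\tau\mapsto s_\tau$ is injective (if $\tau(j)\ne\tau'(j)$, pick $x\in M_n$ with $x_{\tau(j)}=0$ and $x_{\tau'(j)}=1$, so the $j$-th coordinates of $s_\tau(x)$ and $s_{\tau'}(x)$ differ), so together with the converse it is a bijection $\{1,\dots,n\}^{\{1,\dots,m\}}\to\mathcal{SM}(M_n,\mathbb R^m,1_{\mathbb R^m})$, giving $|\mathcal{SM}(M_n,\mathbb R^m,1_{\mathbb R^m})|=n^m$. By Corollary~\ref{co:3.16} with value space $(\mathbb R^m,1_{\mathbb R^m})$ (its ``$n$'' being our $m$) we have $\mathcal S_\partial(M_n,\mathbb R^m,1_{\mathbb R^m})=\mathcal{SM}(M_n,\mathbb R^m,1_{\mathbb R^m})$, so the same count holds for extremal $(\mathbb R^m,1_{\mathbb R^m})$-states. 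Finally, $\Ker(s_\tau)=\{x\in M_n\colon x_{\tau(j)}=0,\ j=1,\dots,m\}=\{x\in M_n\colon x_i=0\text{ for }i\in\mathrm{Im}\,\tau\}$; recalling from the discussion preceding Proposition~\ref{pr:3.19} that every ideal of $M_n$ is a product $I_1\times\dots\times I_n$ with each $I_i\in\{\{0\},[0,1]\}$ and that the maximal ones are exactly those with a single $\{0\}$ factor, this kernel is maximal iff $|\mathrm{Im}\,\tau|=1$, i.e.\ iff $\tau$ is constant with some value $i$, i.e.\ iff $s_\tau(x)=(x_i,\dots,x_i)=\pi_i(x)1_{\mathbb R^m}$; when $|\mathrm{Im}\,\tau|\ge2$ the kernel is properly contained in a maximal ideal and hence not maximal.

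I do not expect a serious obstacle; the one place requiring care is the converse direction, where one must check that the three recalled ingredients — Boolean elements map to Boolean elements, $s(tx)=ts(x)$ for $t\in[0,1]$, and preservation of finite orthogonal sums — really combine to pin $s$ down on all of $M_n$, and that the summability of $(f_1,\dots,f_n)$, which forces the supports of the $f_i$ to partition $\{1,\dots,m\}$, is precisely what produces the parametrizing map $\tau$.
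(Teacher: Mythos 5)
Your proof is correct and follows exactly the route the paper intends, since its own proof of this proposition simply defers to the methods of Proposition \ref{pr:3.19}: homogeneity $s(tx)=ts(x)$ plus additivity on $x=x_1e_1+\cdots+x_ne_n$ to pin down the form $s(x)=\sum_i x_i s(e_i)$, the dictionary between normalized summable Boolean $n$-tuples in $M_m$ and maps $\tau\colon\{1,\ldots,m\}\to\{1,\ldots,n\}$, Corollary \ref{co:3.16} for $\mathcal S_\partial=\mathcal{SM}$, and the ideal structure of $M_n$ for the maximal-kernel criterion. Your observation that the normalization $a_1+\cdots+a_n=1_{\mathbb R^m}$ must be read into the hypothesis (as in Proposition \ref{pr:3.19}) is accurate and handled appropriately.
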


\begin{proof}
The proof follows methods of the proof of Proposition \ref{pr:3.19}.
\end{proof}

Now we present a criterion for $(R,1_R)$ when the kernel of every $(R,1_R)$-state-morphism on an arbitrary pseudo MV-algebra is a maximal ideal.

\begin{proposition}\label{pr:3.20}
Let $(R,1_R)$ be a unital Riesz space. Then every $(R,1_R)$-state-morphism on an arbitrary pseudo MV-algebra has the kernel a maximal ideal if and only if $(R,1_R)$ is isomorphic to $(\mathbb R,1)$.
\end{proposition}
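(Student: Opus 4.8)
The plan is to prove the two implications separately; happily, almost all the content is already available in the paper.

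For the ``if'' direction, suppose $(R,1_R)\cong(\mathbb R,1)$ and fix an isomorphism $\psi$ of unital Riesz spaces from $(R,1_R)$ onto $(\mathbb R,1)$. If $s$ is an $(R,1_R)$-state-morphism on a pseudo MV-algebra $M$, then $\psi\circ s\colon M\to\Gamma(\mathbb R,1)$ is again a homomorphism of pseudo MV-algebras, i.e.\ a state-morphism in the usual sense, and since $\psi$ is injective with $\psi(0)=0$ we have $\Ker(\psi\circ s)=\Ker(s)$. By \cite[Prop 4.3]{DvuS} a state on a pseudo MV-algebra is a state-morphism precisely when its kernel is a maximal ideal, hence $\Ker(s)$ is a maximal ideal of $M$; this disposes of the ``if'' direction with no further effort.

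For the ``only if'' direction I would use a single universal test object. Assume every $(R,1_R)$-state-morphism on every pseudo MV-algebra has maximal kernel, and apply this to $M:=\Gamma(R,1_R)$ together with $s:=\mathrm{id}_M\colon M\to\Gamma(R,1_R)$. This $s$ is a homomorphism of pseudo MV-algebras, hence an $(R,1_R)$-state-morphism, with $\Ker(s)=\{0\}$; so by hypothesis $\{0\}$ is a maximal ideal of $\Gamma(R,1_R)$. Consequently $R$ has no proper nonzero $\ell$-ideal, since a proper nonzero $\ell$-ideal $J$ of $R$ would produce the proper nonzero ideal $J\cap[0,1_R]$ of $\Gamma(R,1_R)$ lying strictly above $\{0\}$; that is, $R$ is a simple $\ell$-group. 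Then, exactly as in the proof of Theorem \ref{th:3.13} via \cite[Lem 14.1]{Goo}, every strictly positive element of $R$ is a strong unit of $R$; in particular $R$ is Archimedean, and so the remark following Theorem \ref{th:3.13} — which runs through the Yosida Representation Theorem, Theorem \ref{th:Yos} — yields $(R,1_R)\cong(\mathbb R,1)$, as desired.

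The whole argument is short precisely because everything substantial is imported: the maximal-kernel characterization of state-morphisms from \cite[Prop 4.3]{DvuS}, and the identification of the unital Riesz spaces in which every strictly positive element is a strong unit. The one point that needs genuine care — the step I would regard as the crux — is the chain ``$\{0\}$ maximal in $\Gamma(R,1_R)$'' $\Longrightarrow$ ``$R$ simple'' $\Longrightarrow$ ``every strictly positive element of $R$ is a strong unit'' $\Longrightarrow$ ``$R$ Archimedean'', the last implication being exactly what legitimizes invoking Yosida; all of these facts are already used in the proofs of Proposition \ref{pr:3.7}(2) and Theorem \ref{th:3.13}, and Example \ref{ex:3.18} is the concrete instance of the failure side (with $R=\mathbb R^n$, $n\ge 2$).
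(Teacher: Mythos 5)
Your proposal is correct and follows essentially the same route as the paper: the ``if'' direction via \cite[Prop 4.3]{DvuS}, and the ``only if'' direction by testing the hypothesis on $M=\Gamma(R,1_R)$ with the identity $(R,1_R)$-state-morphism, concluding that $R$ is a simple $\ell$-group, hence by \cite[Lem 14.1]{Goo} every strictly positive element is a strong unit, and then invoking the remark following Theorem \ref{th:3.13} to get $(R,1_R)\cong(\mathbb R,1)$. The only cosmetic difference is that you pass from ``$\{0\}$ is a maximal ideal of $\Gamma(R,1_R)$'' to ``$R$ is simple'' by the direct observation that a proper nonzero $\ell$-ideal $J$ yields the proper nonzero ideal $J\cap[0,1_R]$, where the paper cites the categorical equivalence of unital Abelian $\ell$-groups and MV-algebras; both are equally valid.
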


\begin{proof}
Let $(R,1_R)\cong (\mathbb R,1)$ and let $M$ be an arbitrary pseudo MV-algebra. Let $s$ be an $(R,1_R)$-state-morphism on $M$. According to \cite[Prop 4.3]{DvuS}, $\Ker(s)$ is a maximal ideal of $M$.

Conversely, let $M$ be an arbitrary pseudo MV-algebra with an $(R,1_R)$-state-morphism $s$ such that $\Ker(s)$ is a maximal ideal of $M$.  Take the special MV-algebra $M=\Gamma(R,1_R)$ and let $s(x)=x$, $x \in M$. Then clearly $s$ is an $(R,1_R)$-state-morphism with $\Ker(s)=\{0\}$, and by the assumption, $\Ker(s)=\{0\}$ is a maximal ideal of $M$. Therefore, $M$ has only the zero ideal and $M$. Due to the categorical equivalence of Abelian unital $\ell$-groups and MV-algebras, the Riesz space $(R,1_R)$ has only two $\ell$-ideals, $\{0\}$ and $R$ which by \cite[Lem 14.1]{Goo} yields that every strictly positive element of $R$ is a strong unit of $R$.
As it was shown just after Theorem \ref{th:3.13}, this means that $(R,1_R)\cong (\mathbb R,1)$.

The same result we obtain if take into account that the $\ell$-ideal $\{0\}$ of $R$ is a maximal $\ell$-ideal of $R$ generated by $\Ker(s)$. Therefore,
$R\cong R/\{0\}$ and the quotient unital Riesz space $(R/\{0\},1_R/\{0\})$ can be identify with a unital subgroup $(\mathbb R_0,1)$ of $(\mathbb R,1)$. Since $\alpha =\alpha 1\in \mathbb R_0$ for each real number $\alpha$, we conclude $\mathbb R_0=\mathbb R$.
\end{proof}

Now we present another criterion of maximality of $\Ker(s)$ for an $(R,1_R)$-state-morphism $s$ when $(R,1_R)$ is an Archimedean unital Riesz space.

\begin{proposition}\label{pr:3.21}
Let $T$ be a non-void compact Hausdorff space and $M$ be a pseudo MV-algebra. Then the kernel of a $(C(T),1_T)$-state-morphism $s$ on $M$ is a maximal ideal if and only if there is a state-morphism $s_0$ on $M$ such that $s(x)=s_0(x)1_T$, $x \in M$.

The same statement holds for a $(C_b(T),1_T)$-state-morphism on $M$.
\end{proposition}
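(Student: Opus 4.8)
The plan is to prove the two implications separately: the implication ``$\Leftarrow$'' is immediate, while ``$\Rightarrow$'' will be obtained by a fibrewise (pointwise in $T$) analysis, culminating in the uniqueness of a state-morphism with a prescribed kernel.

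First, suppose $s(x)=s_0(x)1_T$ for some state-morphism $s_0$ on $M$. (Note that, since $r\mapsto r1_T$ is an injective homomorphism of unital Riesz spaces $(\mathbb R,1)\to(C(T),1_T)$, such an $s$ is automatically a $(C(T),1_T)$-state-morphism, so the equivalence is well-posed.) Then $\Ker(s)=\{x\in M:\ s_0(x)1_T=0\}=\Ker(s_0)$, and $\Ker(s_0)$ is a maximal ideal of $M$ because $s_0$ is a state-morphism, by \cite[Prop 4.3]{DvuS}. This settles ``$\Leftarrow$''.

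For ``$\Rightarrow$'', assume $s$ is a $(C(T),1_T)$-state-morphism with $\Ker(s)$ maximal. For each $t\in T$ I would use the evaluation map $s_t\colon\Gamma(C(T),1_T)\to[0,1]$, $s_t(f)=f(t)$, which is a state-morphism on $\Gamma(C(T),1_T)$; then $\hat s_t:=s_t\circ s\colon M\to[0,1]$ is a state-morphism on $M$, so $\Ker(\hat s_t)$ is a maximal ideal of $M$ by \cite[Prop 4.3]{DvuS}. Next I would observe that $x\in\Ker(s)$ precisely when the function $s(x)$ vanishes at every $t$, i.e. $\Ker(s)=\bigcap_{t\in T}\Ker(\hat s_t)$; in particular $\Ker(s)\subseteq\Ker(\hat s_t)\subsetneq M$ for each $t$, and since $\Ker(s)$ is maximal this forces $\Ker(\hat s_t)=\Ker(s)$ for all $t\in T$.

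The hard part is the last step, namely upgrading ``all the $\hat s_t$ have the same kernel'' to ``all the $\hat s_t$ coincide''. For this I would invoke \cite[Prop 4.5]{DvuS}: a state-morphism on a pseudo MV-algebra is uniquely determined by its kernel, whence $\hat s_t=\hat s_{t'}=:s_0$ for all $t,t'\in T$, and $s_0$ is a state-morphism on $M$. Then $s(x)(t)=\hat s_t(x)=s_0(x)$ for every $t\in T$, i.e. $s(x)=s_0(x)1_T$, which is exactly the desired conclusion. Finally, the same argument goes through verbatim for a $(C_b(T),1_T)$-state-morphism $s$: evaluation at a point $t\in T$ is still a state-morphism on $\Gamma(C_b(T),1_T)$, the constant functions still belong to $C_b(T)$, and the identity $\Ker(s)=\bigcap_{t\in T}\Ker(\hat s_t)$ still holds, so the same reasoning yields a state-morphism $s_0$ on $M$ with $s=s_0\,1_T$.
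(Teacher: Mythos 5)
Your proof is correct and follows essentially the same route as the paper: evaluation maps $\hat s_t$, the identity $\Ker(s)=\bigcap_{t\in T}\Ker(\hat s_t)$, maximality forcing $\Ker(\hat s_t)=\Ker(s)$, and then uniqueness of a state-morphism with a given kernel (the paper uses \cite[Prop 4.5]{DvuS} implicitly here, exactly as you cite it). Your explicit treatment of the easy direction and of the $C_b(T)$ case just spells out what the paper calls evident.
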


\begin{proof}
Let $s$ be a $(C(T),1_T)$-state-morphism on $M$, and for each $t\in T$, let $s_t:M \to [0,1]$ be a mapping given by $s_t(x):=s(x)(t)$, $x \in M$. Then each $s_t$ is a state-morphism on $M$, and $\Ker(s)=\bigcap_{t \in T}\Ker(s_t)$. Hence, if $\Ker(s)$ is maximal, then from $\Ker(s)\subseteq \Ker(s_t)$ we conclude $\Ker(s)=\Ker(s_t)$ because every $\Ker(s_t)$ is a maximal ideal of $M$, see \cite[Prop 4.3]{DvuS}, so that $s_t = s_{t'}$ for all $t,t'\in T$ which gives the desired result.

The converse statement is evident.
\end{proof}

Using \cite[Lem 8.10]{Goo}, it is possible to show that if $s$ is an extremal state on an MV-algebra $M$, then for each Boolean element $e\in B(M)$, we have $s(e)\in \{0,1\}$. Using the Proposition \ref{pr:3.19}, we can show that this is not true for each extremal $(R,1_R)$-state.

\begin{example}\label{ex:3.22}
There are an MV-algebra $M$, an Archimedean unital Riesz space $(R,1_R)$, an $(R,1_R)$-state-morphism, i.e. an extremal $(R,1_R)$-state-morphism $s$ and a Boolean element $e \in B(M)$ such that $s(e)\notin \{0,1_R\}$.
\end{example}

\begin{proof}
Take a particular case of Proposition \ref{pr:3.19}, namely $(R,1_R)=(\mathbb R^3,1_{\mathbb R^3})$, $M= \Gamma(\mathbb R^3,1_{\mathbb R^3})$ and let $s$ be an $(\mathbb R^3,1_{\mathbb R^3})$-state-morphism given by (\ref{eq:R-state0}), where $a_1=(1,0,0)$, $a_2=(0,1,0)$, $e_3=(0,0,1)$. If we take the Boolean element $e=(1,0,0)$, then $s(e)=a_1 \notin \{0,(1,1,1)\}$.
\end{proof}

\section{Metrical Completion of Unital $\ell$-groups with Respect  $(R,1_R)$-States}

In this section, we show that if $(R,1_R)$ is a unital Archimedean or even a Dedekind complete Riesz space and $s$ is an $(R,1_R)$-state on a unital $\ell$-group $(G,u)$, then $G$ can be metrically completed with respect to a norm induced by $s$. If $R$ is Dedekind complete, in particular, if  $(R,1_R)=(\mathbb R^n,1_{\mathbb R^n})$, where $1_{\mathbb R^n}=(1,\ldots,1)$, then the metrical completion of $G$ gives a Dedekind complete $\ell$-group.

Let $T$ be a compact Hausdorff topological space. We endow $C(T)$ with the uniform topology generated by the norm $\|\cdot\|_T$, i.e. $\|f\|_T = \sup\{|f(t)| \colon t \in T\}$.

Let $(R,1_R)$ be an Archimedean unital Riesz space. By the Yosida Representation Theorem, Theorem \ref{th:Yos}, there is a compact Hausdorff topological space $T$ such that $(R,1_R)$ can be embedded into $(C(T),1_T)$ as its Riesz subspace. If $\phi$ is this embedding, then the image $\phi(R)$ is uniformly dense in $C(T)$. More precisely, let $T$ be the set of maximal ideals of $(R,1_R)$. If we define the hull-kernel topology on $T$, $T$ becomes a non-void compact Hausdorff topological space. If $I$ is a maximal ideal of $(R,1_R)$, then the quotient $R/I$ can be identify with a unital subgroup $(\mathbb R_0,1)$ of $(\mathbb R,1)$ (in fact, $\mathbb R_0=\mathbb R$). Moreover, given $x \in R$, the mapping $\widehat x:T \to \mathbb R$ given by $\widehat x(I):=x/I$, $I \in T$, is a continuous function on $T$, and the mapping
$\phi:R \to C(T)$ defined by $\phi(x)=\widehat x$, $x\in R$, is by \cite[Thm 45.3]{LuZa} an isomorphic embedding of the unital Archimedean space $(R,1_R)$ into $(C(T),1_T)$ such that $\phi(R)$ is uniformly dense in $C(T)$. We call this embedding the {\it canonical embedding}, the triple $(C(T),1_T,\phi)$ is said to be the {\it canonical representation} of $(R,1_R)$, and we shall write $(R,1_R)\sim (C(T),1_T,\phi)$. We note that according to \cite[Thm 45.4]{LuZa} if, in addition, $(R,1_R)$ is Dedekind $\sigma$-complete, then $\phi(R)=C(T)$, and $\phi$ is an isomorphism of Riesz spaces.

Thus, let $(R,1_R)$ be an Archimedean unital Riesz space with the canonical representation $(C(T),1_T,\phi)$. Let $s$ be an $(R,1_R)$-state on a pseudo MV-algebra $M$. Assume $M=\Gamma(G,u)$ for some unital $\ell$-group $(G,u)$. Due to the categorical equivalence,  $s$ can be extended to a unique $(R,1_R)$-state on $(G,u)$; we denote it by $\hat s$. Due to (ix) and (xii) of Proposition \ref{pr:3.2}, we have that $M_s:=M/\Ker(s)$ is an Archimidean MV-algebra, and $M_s\cong \Gamma(G_s,u/I_s)$, where $G_s = G/I_s$ and $I_s$ is an $\ell$-ideal of $G$ generated by $\Ker(s)$. Then the $(R,1_R)$-state $\tilde s$ on $M_s$ can be uniquely extended to a unique $(R,1_R)$-state $\widehat {\tilde s}$ on $(G_s,u/I_s)$. In addition, $I_s=\{x\in G\colon \hat s(|x|)=0\}$, where $|x|= x^+ + x^-$, $x^+= x\vee 0$ and $x^-=-(x\wedge 0)$. In general, if $s$ is an $(R,1_R)$-state on a unital $\ell$-group $(G,u)$, then the {\it kernel} of $s$ is the set $\Ker(s)=\{x \in G\colon s(|x|)=0\}$.

We define a pseudo norm $|\cdot|_s$ on $M$ as follows
$$
|x|_s:= \| \phi(s(x))\|_T:=\sup\{|\phi(s(x))(t)| \colon t \in T\},\quad x \in M,
$$
This pseudo norm can be extended to a pseudo norm $|\cdot |_s$ on $G$ as follows
$$|x|_s:= \| \phi(\hat s(x))\|_T=\sup\{|\phi(\hat s(x))(t)| \colon t \in T\}, \quad x \in G.
$$
The simple properties of $|\cdot|_s$ are as follows: For $x,y, \in G$, we have
\begin{enumerate}
\item[{\rm (i)}] $|x+y|_s\le |x|_s + |y|_s$.

\item[{\rm (ii)}] $|\phi(\hat s(x))(t)-\phi(\hat s(y))(t)|\le |x-y|_s$, $t \in T$.

\item[{\rm (ii)}] $|nx|_s  \le |n|\cdot |x|_s$, $n \in \mathbb Z$.

\item[{\rm (iv)}] $|-x|_s = |x|_s$, $|0|_s = 0$.

\item[{\rm (v)}] If $y \in G^+$ and $-y \le x \le y$, then $|x|_s\le |y|_s$.
\end{enumerate}

Since our aim is to study extremal $(R,1_R)$-states on $M$ and in view of Lemma \ref{le:3.3}, an $(R,1_R)$-state $s$ on $M$ is extremal iff so is $\tilde s$ on $M_s:=M/\Ker(s)$, without loss of generality we will assume that $M$ is an Archimedean MV-algebra, $(G,u)$ is an Archimedean (Abelian) unital $\ell$-group, and $s(x)=0$ for $x \in M$ iff $x=0$ because $\tilde s([x])=0$ iff $[x]=0$. Then $|\cdot|_s$ is a norm on $M$ and $(G,u)$, respectively, and $d_s(x,y):=|x-y|_s$ defines a metric called the $s$-{\it metric}.

Because of triangle inequality (i), addition and subtraction in $G$ are uniformly continuous with respect to $d_s$. Hence, the $d_s$-completion $\overline G$ of $G$ is a topological Abelian group, and the natural mapping $\psi:G \to \overline G$ is a continuous group homomorphism. We define a relation $\le$ on $\overline G$ so that for any $x,y \in \overline G$, we put $x\le y$ iff $y-x$ lies in the closure of $\psi(G^+)$. We note $\Ker(\psi)=\{x \in G: |x|_s=0\}$. In the following statement we show, in particular, that this relation is a translation-invariant partial order on $\overline G$, that is, given $x,y,z \in \overline G$, $x\le y$ implies $x+z\le y+z$. In what follows, we are inspired by ideas and proofs from \cite[Chap 12, Thm 12.2]{Goo}.

\begin{proposition}\label{pr:4.1}
Let $(R,1_R)$ be an Archimedean unital Riesz space with $(R,1_R)\sim (C(T),1_T,\phi)$. Let $(G,u)$ be an Archimedean unital $\ell$-group, and $s$ an $(R,1_R)$-state on $(G,u)$ such that if $s(x)=0$ for some $x\ge 0$, then $x=0$. Let $\overline G$ be the $d_s$-completion of $G$, $\psi: G \to \overline G$ be the natural embedding, and let $\overline{d}_s$ denote the induced metric on $\overline G$.
Then
\begin{enumerate}
\item[{\rm (i)}] $\overline{G}$ is a directed po-group with positive cone equal to the closure of $\psi(G^+)$.

\item[{\rm (ii)}] There is a unique continuous mapping $\overline s:\overline G\to C(T)$ such that $\phi \circ s = \overline s \circ \psi$, and $\overline s$ is a positive homomorphism of po-groups.

\item[{\rm (iii)}] $\overline d_s(g,0)= \|\overline s(g)\|_T$ for each $g\in \overline G$, and $\overline d_s(g,0)=|g|_s= \|\phi(s(g))\|_T$ for each $g \in G$.
\end{enumerate}
\end{proposition}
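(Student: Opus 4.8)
The plan is to imitate the construction of metric completions of po-groups with a norm from \cite[Chap.~12]{Goo}, the new ingredient being that the norm here comes from the uniform norm on $C(T)$ via the Yosida embedding $\phi$. Recall from property (i) of $|\cdot|_s$ that $+$ and $-$ on $G$ are $d_s$-uniformly continuous, so that $\overline G$ is a complete metric abelian group, $\psi\colon G\to\overline G$ is a continuous homomorphism with dense image, and $\overline d_s$ is translation invariant; these facts will be used freely.

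The key elementary remark is that $\phi\circ\hat s\colon (G,d_s)\to(C(T),\|\cdot\|_T)$ is an isometry onto its image: for $x,y\in G$ one has $\overline d_s(\psi(x),\psi(y))=|x-y|_s=\|\phi(\hat s(x-y))\|_T=\|\phi(\hat s(x))-\phi(\hat s(y))\|_T$, since $\phi$ and $\hat s$ are additive. As $C(T)$ is complete, this isometry has a unique continuous extension $\overline s\colon\overline G\to C(T)$, which is again isometric and satisfies $\phi\circ\hat s=\overline s\circ\psi$ on $G$ (hence $\phi\circ s=\overline s\circ\psi$ on $M$); uniqueness comes from the density of $\psi(G)$ in $\overline G$. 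Letting $n\to\infty$ in $\phi(\hat s(x_n+y_n))=\phi(\hat s(x_n))+\phi(\hat s(y_n))$ shows $\overline s$ is a group homomorphism, which gives the existence part of (ii). For (iii), if $a_n\in G$ with $\psi(a_n)\to g$ then $\overline d_s(g,0)=\lim_n d_s(a_n,0)=\lim_n\|\phi(\hat s(a_n))\|_T=\|\overline s(g)\|_T$ by continuity of $\overline s$ and of the norm; for $g\in G$ this reads $\overline d_s(\psi(g),0)=|g|_s=\|\phi(\hat s(g))\|_T$, as claimed.

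For (i), put $\overline G^+:=\overline{\psi(G^+)}$. It is a submonoid of $\overline G$ because $\psi(G^+)$ is one and addition is continuous. For antisymmetry, suppose $g\in\overline G^+\cap(-\overline G^+)$; then $\overline s(g)\in\overline{\phi(\hat s(G^+))}\subseteq C(T)^+$ and likewise $-\overline s(g)\in C(T)^+$, the cone $C(T)^+$ being closed, whence $\overline s(g)=0$ and therefore $\overline d_s(g,0)=\|\overline s(g)\|_T=0$, forcing $g=0$ since $\overline d_s$ is a metric on $\overline G$. Thus $\overline G^+$ is the positive cone of a translation-invariant partial order, with $g\le h$ iff $h-g\in\overline{\psi(G^+)}$, and $\overline s$ maps $\overline G^+$ into $C(T)^+$, so $\overline s$ is a positive homomorphism of po-groups, completing (ii). It remains to see that $\overline G$ is directed, equivalently that every $g\in\overline G$ admits an upper bound in $\overline G^+$. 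Since $u$ is a strong unit of $G$, for $a\in G$ one has $a=(ku)-(ku-a)$ with $ku,\,ku-a\in G^+$ for $k$ large, so $\psi(G)\subseteq\overline G^+-\overline G^+$; using $\|\overline s(g)\|_T<\infty$ and $\overline s(\psi(u))=1_T$, I would approximate a general $g$ by such $\psi(a)$ with $\overline d_s(\psi(a),g)$ small, dominate these uniformly by a suitable multiple of $\psi(u)$, and pass to the limit to produce an upper bound of $g$ in $\overline G^+$; alternatively, one checks that $\overline G^+-\overline G^+$ is closed (it contains the dense subgroup $\psi(G)$ and $\overline G^+$ is monotone for the norm) and hence equals $\overline G$.

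The routine parts here are (ii), (iii), and the monoid/antisymmetry half of (i), all of which are limiting arguments resting on the single observation that $\phi\circ\hat s$ is isometric. I expect the real obstacle to be the directedness of $\overline G^+$ in (i). The difficulty is that an $(R,1_R)$-state need not respect the lattice operations, so $\hat s(x^+)$ is not controlled by $\|\phi(\hat s(x))\|_T$; consequently one cannot simply split a $d_s$-Cauchy sequence into its positive and negative parts and conclude, and the argument must instead exploit the strong unit $u$ (whose $|\cdot|_s$-norm equals $1$) to obtain the required uniform domination. Making that domination precise is the technical heart of the proof; everything else is bookkeeping with limits.
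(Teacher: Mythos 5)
Your treatment of (ii), (iii) and of the strict-cone half of (i) is essentially the paper's own argument: extend $\phi\circ\hat s$ by density and uniform continuity, pass additivity, positivity and the norm identity to the limit, and get strictness of the closed cone from $\overline s(g)=0\Rightarrow\overline d_s(g,0)=0$ (the paper does the same thing by squeezing $0\le\phi(s(x_n))\le\phi(s(x_n+y_n))$). The genuine gap is the directedness of $\overline G$, which you explicitly leave as a sketch, and neither sketched route works as stated. Uniform domination of an approximating sequence by a multiple of $\psi(u)$ is impossible in general: smallness of $\overline d_s(\psi(a),g)$ only controls $s$-norm data, and elements of small $s$-norm may require arbitrarily large $k$ with $a\le ku$ (think of a faithful integral-type state and tall, thin positive elements), so the approximants of $g$ need not have any common upper bound $k\psi(u)$, and one cannot "pass to the limit" to get an upper bound of $g$. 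Likewise, closedness of $\overline G^{+}-\overline G^{+}$ is precisely what has to be proved: a difference of a closed cone with itself need not be closed, and density of $\psi(G)$ together with monotonicity of the norm does not yield it.

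The paper closes this by the telescoping argument of \cite[Thm 12.2]{Goo}: choose $x_n\in G$ with $\psi(x_n)\to x$ and $|x_{n+1}-x_n|_s<1/2^n$, split each increment $x_{n+1}-x_n=a_n-b_n$ into its positive and negative parts $a_n,b_n\in G^{+}$, observe that $|a_n|_s,|b_n|_s\le|a_n+b_n|_s=|x_{n+1}-x_n|_s$, so the series $\sum_n\psi(a_n)$ and $\sum_n\psi(b_n)$ converge to elements $a,b$ of the closed cone with $a-b=x-\psi(x_1)$; writing $x_1=c-d$ with $c,d\in G^{+}$ gives $x=(a+\psi(c))-(b+\psi(d))$, hence $\overline G$ is directed. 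The obstacle you single out --- that $\hat s(x^{+})$ is not controlled by $\|\phi(\hat s(x))\|_T$ --- is an artifact of the literal displayed formula for $|\cdot|_s$ on $G$; for $|\cdot|_s$ to be a norm on $G$ under the faithfulness hypothesis, and for the directedness argument, one must read $|x|_s=\|\phi(\hat s(|x|))\|_T$, and then property (v) gives exactly the bound $|a_n|_s\le|x_{n+1}-x_n|_s$ that legitimizes splitting the increments. With that reading your isometry remark weakens to non-expansiveness of $\phi\circ\hat s$, which still suffices for (ii) and for the antisymmetry step; the missing idea is the increment-splitting construction for directedness.
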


\begin{proof}
(i)  Let $C$ be the closure of $\psi(G^+)$ in $\overline G$. Since $\psi(0)=0$, we have $0 \in C$. As $\psi(G^+)$ is closed under addition, the continuity of addition in $\overline G$ entails that $G$ is closed under addition, so that $C$ is a cone.  Now let $x,-x \in C$. Take two sequences $\{x_n\}$ and $\{y_n\}$ in $G^+$ such that $\psi(x_n)\to x$ and $\psi(y_n)\to -x$. Then $\psi(x_n+ y_n)\to 0$.  Since
$$
\overline d_s(\psi(x_n+y_n),0)=d_s(x_n+y_n,0)= |x_n+y_n|_s=\|\phi(s(x_n+y_n))\|_T\to 0
$$
for all $n$, consequently, the sum $\phi(s(x_n))+\phi(s(y_n))$  of positive real-valued continuous functions on $T$ converges uniformly on $T$ to the zero function $0_T$ on $T$. Then $0_T\le \phi(s(x_n))\le \phi(s(x_n+y_n))$ for all $n$. Whence, $\phi(s(x_n))\rightrightarrows 0_T$, and therefore, $x=0$. Thus $C$ is a strict cone, and $\overline G$ becomes a po-group with positive cone $C$.

Now we show that $\overline G$ is directed. Let $x \in \overline G$, and let us choose a sequence $\{x_n\}$ in $G$ such that $|x_{n+1}-x_n|_s< 1/2^n$ for all $n$. Then $x_{n+1}-x_n= a_n - b_n$, where $a_n=(x_{n+1}-x_n)\vee 0\in G^+$ and $b_n= -((x_{n+1}-x_n)\wedge 0)\in G^+$. Then $|x_{n+1}-x_n|_s= |a_n + b_n|_s < 1/2^n$.

Since $a_n\le a_n+b_n$, we have $|a_n|_s \le |a_n+b_n|_s < 1/2^n$ for all $n$. Therefore, the partial sums of the series $\sum_n a_n$ form a Cauchy sequence with respect to $d_s$. Consequently, the series $\sum_n\psi(a_n)$ converges to an element $a \in \overline G$. As the partial sums of this series all lie in $\psi(G^+)$, then $a \in \overline G^+$. In the same way, the series $\sum_n \psi(b_n)$ converges to an element $b \in \overline G^+$. We have
\begin{align*}
a-b&= \lim_k \sum_{n=1}^k \psi(a_n-b_n)= \lim_k \sum_{n=1}^k \psi(x_{n+1}-x_n)\\
&= \lim_k (\psi(x_{k+1})-\psi(x_1))=x-\psi(x_1).
\end{align*}
Since $x_1=c-d$ for some $c,d \in G^+$, we have $x =(a+\psi(c))-(b+\psi(d))$ with $a+\psi(c)$ and $b+\psi(d)\in \overline G^+$ which established $\overline G$ is directed.

(ii) Let $g \in \overline G$. Choose two sequences $\{f_n\}$ and $\{h_n\}$ in $G$ such that $\overline d_s(f_n,g)\to 0$ and $\overline d_s(h_n,g) \to 0$. Then $\phi(s(f_n))\rightrightarrows f \in C(T)$ and $\phi(s(h_n))\rightrightarrows h\in C(T)$. We assert that $f=h$. Indeed
\begin{align*}
\|f-h\|_T&= \|f-\phi(s(f_n))\|_T+ |\phi(s(f_n))-\phi(s(h_n))|_s+ \|\phi(s(h_n))-h\|_T\\
&= \|f-\phi(s(f_n))\|_T+ |f_n- h_n|_s+ \|\phi(s(h_n))-h\|_T\to 0,
\end{align*}
which yields $f=h$.  Therefore, we can define unambiguously $\overline s: \overline G\to C(T)$ as follows: $\overline s(g)= \lim_n \phi(s(g_n))$, $g \in \overline G$, for any sequence $\{g_n\}$ of elements of $G$ such that $\overline d_s(g_n,g)\to 0$.  Then evidently $\overline s(g)\ge 0_T$ if $g\in \overline G^+$, and $\overline s(g+h)=\overline s(g)+\overline s(h)$ for all $f,g \in \overline G$ as well as $\phi \circ s = \overline s \circ \psi$.
In addition, $\overline d_s(g,0)= \|\overline s(g)\|_T$, $g\in \overline G$.

If $\overline d_s(g,g_n)\to 0$ for $g \in \overline G$ and for a sequence $\{g_n\}$ of elements of $\overline G$, then $\|\overline s(g)-\overline s(G_n)\|_T= \overline d_s(g,g_n) \to 0$, so that $\overline s$ is continuous on $\overline G$. If $s':\overline G\to C(T)$ is a continuous positive homomorphism of po-groups such that $\phi \circ s =  s' \circ \psi$, then $s(g)$ and $s'(g)$ coincide for each $g \in G$, so that $s'=\overline s$.

(iii) It follows from the end of the proof of (ii).
\end{proof}

\begin{remark}\label{re:4.2}
{\rm The po-group $\overline G$ from the latter proposition is said to be the {\it metrical completion} of $G$ with respect to an $(R,1_R)$-state $s$. Nevertheless that it was supposed that $G$ is an Archimedean $\ell$-group and $s$ has the property $s(x)=0$ for $x \in G^+$ implies $x$, passing to the $(R,1_R)$-state $\widehat {\tilde s}$ on $G_s$, the metrical completion of $G_s$ from Proposition \ref{pr:4.1} with respect to the $(R,1_R)$-state $\widehat{\tilde s}$ is also in fact a metrical completion of $G$ with respect to the $(R,1_R)$-state $s$. In other words, $G$ can be homomorphically embedded into an Abelian metrically complete po-group $\overline G$.
}
\end{remark}

\begin{proposition}\label{pr:4.3}
Let the conditions of Proposition {\rm \ref{pr:4.1}} hold. If $\{x_\alpha\}_\alpha$ and $\{y_\alpha\}_\alpha$ be nets in $\overline G$ such that $x_\alpha \to x$ and $y_\alpha \to y$. If $x_\alpha \le y_\alpha$ for each $\alpha$, then $x\le y$.
\end{proposition}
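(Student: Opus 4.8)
The plan is to read off the conclusion directly from the definition of the order on $\overline{G}$. Recall that $x\le y$ in $\overline{G}$ means precisely that $y-x$ belongs to $C$, the closure in $\overline{G}$ of $\psi(G^+)$, which by Proposition \ref{pr:4.1}(i) is exactly the positive cone of $\overline{G}$; in particular $C$ is, by its very definition, a closed subset of the metric space $(\overline{G},\overline{d}_s)$. So the whole statement reduces to two observations: that the net $y_\alpha-x_\alpha$ converges to $y-x$, and that each term $y_\alpha-x_\alpha$ lies in $C$.

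First I would note that the hypothesis $x_\alpha\le y_\alpha$ says exactly that $y_\alpha-x_\alpha\in C$ for every index $\alpha$. Next, since addition and subtraction on $\overline{G}$ are (uniformly) continuous with respect to $\overline{d}_s$ — this was recorded just before Proposition \ref{pr:4.1} and is what makes $\overline{G}$ a topological group — the map $(g,h)\mapsto h-g$ is continuous, hence sends the convergent net $(x_\alpha,y_\alpha)\to(x,y)$ to a convergent net $y_\alpha-x_\alpha\to y-x$. Finally, because $C$ is closed and contains every term $y_\alpha-x_\alpha$ of a net converging to $y-x$, we get $y-x\in C$, that is, $x\le y$.

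There is essentially no obstacle; the one point worth a word of care is that a closed set contains the limit of \emph{every} net of its elements in an arbitrary topological space, so no first-countability or passage to a sequence is needed even though the directed index set of $\{x_\alpha\}_\alpha$ is arbitrary. Alternatively one could argue through the identity $\overline{d}_s(g,0)=\|\overline{s}(g)\|_T$ of Proposition \ref{pr:4.1}(iii) together with positivity of $\overline{s}$ and the fact that $\{f\in C(T)\colon f\ge 0_T\}$ is uniformly closed, but invoking closedness of the positive cone $C$ is the cleanest route.
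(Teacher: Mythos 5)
Your proof is correct and is essentially the paper's own argument: the differences $y_\alpha-x_\alpha$ converge to $y-x$ by continuity of the group operations, and since the positive cone of $\overline G$ is by construction the closure of $\psi(G^+)$, hence closed, it contains $y-x$, giving $x\le y$. The extra remarks on nets versus sequences and the alternative route via $\overline s$ are fine but not needed.
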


\begin{proof}
The differences $y_\alpha - x_\alpha$ form a net in $\overline G$ which converges to $y-x$. As $\overline G^+$ is closed in $\overline G$, we have that $y -x$ lies in $\overline G^+$, and consequently, $x\le y$.
\end{proof}

In what follows, we show that the metrical completion $\overline G$ of $G$ enjoys also some lattice completeness properties. In order to do that, we have to strengthen conditions posed to the Riesz space $(R,1_R)$ assuming $R$ is Dedekind complete. Then due to Theorem \ref{th:sigma}, we have the canonical representation $(R,1_R)\sim (C(T),1_T,\phi)$ and
$(R,1_R)\cong (C(T),1_T)$, where $T\ne \emptyset$ is a compact Hausdorff extremally disconnected topological space. Such a situation is e.g. when $(R,1_R)=(\mathbb R^n,1_{\mathbb R^n})$, $n\ge 1$, then $(R,1_R)\cong (C(T),T)$, where $|T|=n$ and every singleton of $T$ is clopen.

\begin{proposition}\label{pr:4.4}
Let $(R,1_R)$ be a Dedekind complete unital Riesz space with the canonical representation $(R,1_R)\sim (C(T),1_T,\phi)$, where $T\ne \emptyset$ is a Hausdorff compact extremally disconnect topological space. Let $(G,u)$ be an Archimedean unital $\ell$-group and let $s$ be an $(R,1_R)$-state on $(G,u)$ such that if $s(x)=0$ for some $x\ge 0$, then $x=0$. Let $\overline G$ be the $d_s$-completion of $G$, $\psi: G \to \overline G$ be the natural mapping, and let $\overline{d}_s$ denote the induced metric on $\overline G$. Let $\{x_\alpha\colon \alpha \in A\}$ be a net of elements of $\overline G$ which is bounded above and $x_\alpha \le x_\beta$ whenever $\alpha \le \beta$, $\alpha,\beta \in A$.
Then there is an element $x^* \in \overline G$ such that $x_\alpha \to x^*$ and $x^*$ is the supremum of $\{x_\alpha: \overline \alpha \in A\}$ in $\overline G$.
\end{proposition}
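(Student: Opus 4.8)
The plan is to transport the problem forward to $C(T)$ through the map $\overline{s}\colon\overline{G}\to C(T)$ furnished by Proposition~\ref{pr:4.1}, to manufacture a candidate supremum there using the Dedekind completeness of $C(T)$, and then to pull it back into $\overline{G}$. First I would collect the facts I need from Proposition~\ref{pr:4.1}: $\overline{s}$ is a continuous positive homomorphism of po-groups with $\overline{d}_s(g,h)=\|\overline{s}(g)-\overline{s}(h)\|_T$ for all $g,h\in\overline{G}$, so $\overline{s}$ is an isometric embedding of $(\overline{G},\overline{d}_s)$ into $(C(T),\|\cdot\|_T)$; in particular it is injective, and since $\overline{G}$ is $\overline{d}_s$-complete its image $N:=\overline{s}(\overline{G})$ is a uniformly closed subgroup of $C(T)$ containing $1_T$. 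Putting $f_\alpha:=\overline{s}(x_\alpha)$, order-preservation of $\overline{s}$ makes $\{f_\alpha\}$ an increasing net in $C(T)$, and if $b$ is an upper bound of $\{x_\alpha\}$ then $f_\alpha\le\overline{s}(b)$ for all $\alpha$, so $\{f_\alpha\}$ is order bounded in $C(T)$.

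Since $T$ is extremally disconnected, $C(T)$ is Dedekind complete, so $f^{*}:=\bigvee_\alpha f_\alpha$ exists in $C(T)$. The core of the argument, and the step I expect to be the main obstacle, is to prove that $f_\alpha\to f^{*}$ uniformly on $T$, equivalently that $f^{*}\in N$. I would run a Dini-type argument adapted to an extremally disconnected base space: fix $\varepsilon>0$ and set $V_\alpha:=\{t\in T:f^{*}(t)-f_\alpha(t)<\varepsilon\}$; each $V_\alpha$ is open, and $\bigcup_\alpha V_\alpha$ is dense in $T$, for otherwise, letting $W$ be the closure of the nonempty open set $T\setminus\overline{\bigcup_\alpha V_\alpha}$, which is clopen since $T$ is extremally disconnected, we would have $f^{*}-f_\alpha\ge\varepsilon\cdot1_T$ on $W$ for every $\alpha$, so that $f^{*}-\varepsilon\cdot\mathbf{1}_{W}$ is an upper bound $g$ of $\{f_\alpha\}$ with $g\le f^{*}$ and $g\ne f^{*}$, contradicting that $f^{*}$ is the least upper bound. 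The genuinely delicate point is to upgrade this density to the equality $\bigcup_\alpha V_\alpha=T$ and then, by compactness of $T$, to a single $\alpha_0$ with $f^{*}-f_{\alpha_0}\le\varepsilon\cdot1_T$: here one must exploit that $\{x_\alpha\}$ is order bounded inside the metrically complete group $\overline{G}$ and not merely inside $C(T)$, since a monotone order-bounded net in a Dedekind complete $C(T)$ need not converge uniformly to its supremum in general. Once this is done, monotonicity gives $\|f^{*}-f_\alpha\|_T\le\varepsilon$ for all $\alpha\ge\alpha_0$.

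With uniform convergence established, the rest is bookkeeping. Since $N$ is uniformly closed, $f^{*}\in N$, so $x^{*}:=\overline{s}^{-1}(f^{*})\in\overline{G}$ and $\overline{d}_s(x_\alpha,x^{*})=\|f_\alpha-f^{*}\|_T\to0$, i.e.\ $x_\alpha\to x^{*}$. For the supremum property, fix $\alpha$: the tail net $(x_\beta)_{\beta\ge\alpha}$ converges to $x^{*}$ and $x_\alpha\le x_\beta$ for all $\beta\ge\alpha$, so Proposition~\ref{pr:4.3} yields $x_\alpha\le x^{*}$, and thus $x^{*}$ is an upper bound of $\{x_\alpha:\alpha\in A\}$. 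If $y\in\overline{G}$ is any upper bound, then $y-x_\alpha\in\overline{G}^{+}$ for all $\alpha$ and $y-x_\alpha\to y-x^{*}$; since $\overline{G}^{+}$ is closed in $\overline{G}$ by Proposition~\ref{pr:4.1}(i), we conclude $y-x^{*}\in\overline{G}^{+}$, that is $y\ge x^{*}$. Hence $x^{*}=\bigvee_\alpha x_\alpha$ in $\overline{G}$. Note that this last step uses only closedness of the positive cone and Proposition~\ref{pr:4.3}, so essentially all the work is concentrated in the uniform-convergence claim of the previous paragraph.
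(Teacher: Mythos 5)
Your overall route is the paper's route: push the net into $C(T)$ via $\overline s$, use Dedekind completeness of $C(T)$ (Theorem \ref{th:sigma} / Nakano) to form $f^{*}=\bigvee_\alpha \overline s(x_\alpha)$, show the net converges uniformly to $f^{*}$, pull the limit back to $\overline G$ using completeness and the isometry $\overline d_s(g,0)=\|\overline s(g)\|_T$ from Proposition \ref{pr:4.1}, and finish with Proposition \ref{pr:4.3} and closedness of $\overline G^{+}$ to identify the limit as the supremum. Your last paragraph is fine and coincides with the paper's closing argument. The problem is that you never prove the one statement on which everything hinges, namely that $\overline s(x_\alpha)\to f^{*}$ uniformly (equivalently $f^{*}\in\overline s(\overline G)$). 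Your Dini-type argument delivers only that the open sets $V_\alpha$ have dense union in $T$; you yourself flag the passage from density to all of $T$ as ``the genuinely delicate point'' and then continue with ``once this is done''. That is precisely the content of the proposition, and, as you correctly note, it cannot be waved through on order-theoretic grounds alone: in a Dedekind complete $C(T)$ an increasing order-bounded net need not converge uniformly to its supremum (in $C(\beta\mathbb N)$ the characteristic functions of the clopen sets $\{1,\dots,n\}$ increase to $1_T$ in order while staying at uniform distance $1$ from it). So as written the proposal establishes only the routine part of the statement and leaves the core step open.

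For comparison, the paper closes this step as follows: it asserts that the order supremum $f=\bigvee_\alpha\overline s(x_\alpha)$ satisfies $f(t)=\sup_\alpha\overline s(x_\alpha)(t)=\lim_\alpha\overline s(x_\alpha)(t)$ for \emph{every} $t\in T$, and then applies Dini's theorem for monotone nets of continuous functions on the compact space $T$ (\cite[p.~239]{Kel}) to obtain uniform convergence; from there the Cauchy property in $\overline G$, completeness, and Proposition \ref{pr:4.3} finish exactly as in your final paragraph. In other words, the ingredient missing from your write-up is the pointwise identification of the order supremum with the pointwise supremum of the net (or some substitute argument exploiting that the functions $\overline s(x_\alpha)$ come from the metrically complete group $\overline G$ rather than being an arbitrary order-bounded net in $C(T)$). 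Your instinct that this identification is the delicate point is sound --- the paper states it without further justification, and your $V_\alpha$-density argument is exactly where one sees what still has to be ruled out --- but flagging the difficulty is not the same as resolving it, so the proof as proposed is incomplete at its central step.
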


\begin{proof}
Since $C(T)$ is Dedekind complete, there is a continuous function $f\in C(T)$ such that $f = \bigvee_\alpha \overline s(x_\alpha)$. Then $f(t)=\sup_\alpha \overline s(x_\alpha)(t)=\lim_\alpha \overline s(x_\alpha)(t)$ for each $t \in T$. Applying the Dini Theorem, see e.g. \cite[p. 239]{Kel}, for the net $\{\overline s(x_\alpha)\colon \alpha \in A\}$ of continuous functions on $T$, the net converges uniformly to $f$. Consequently, $\{x_\alpha\colon \alpha \in A\}$ is a Cauchy net in $\overline G$, so that it converges to some $x^* \in \overline G$.

For any $x_\alpha$, the subnet $\{x_\beta\colon x_\beta \ge x_\alpha\}$ also converges to $x^*$, whence by Proposition \ref{pr:4.3}, $x^*\ge x_\alpha$. Now, let $z \in \overline G$ be any upper bound for $\{x_\alpha \colon \alpha \in A\}$. Applying again Proposition \ref{pr:4.3}, we conclude $x^*\le z$, which proves that $x^*$ is the supremum in question.
\end{proof}


\begin{proposition}\label{pr:4.5}
Let the conditions of Proposition {\rm \ref{pr:4.4}} hold. Then $\overline G$ has interpolation.
\end{proposition}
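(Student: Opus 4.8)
The plan is the classical three-step argument for completions: (1) approximate $x_1,x_2,y_1,y_2$ from $G$; (2) interpolate inside $G$, where interpolation is free because $G$ is an $\ell$-group, hence a lattice; (3) push the interpolant to a limit in $\overline G$ using the monotone completeness obtained in Proposition \ref{pr:4.4}, together with Propositions \ref{pr:4.1} and \ref{pr:4.3}. After the reduction already in force (Lemma \ref{le:3.3} and the discussion preceding Proposition \ref{pr:4.1}) we may assume $G$ is Archimedean and $s$ faithful, so $\psi:G\to\overline G$ is an embedding with $\overline G^{+}=\overline{\psi(G^{+})}$, limits of nets respect $\le$, and every bounded-above increasing net in $\overline G$ converges to its supremum.

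In detail: given $x_1,x_2\le y_1,y_2$ in $\overline G$, choose by density of $\psi(G)$ sequences $\{u^i_n\},\{v^j_n\}\subseteq G$ ($i,j\in\{1,2\}$) with $\overline d_s(\psi(u^i_n),x_i)<2^{-n}$ and $\overline d_s(\psi(v^j_n),y_j)<2^{-n}$. The first delicate point is to arrange that the order relations already hold in $G$: since $x_i-y_j\le 0$ lies in $-\overline G^{+}=-\overline{\psi(G^{+})}$ by Proposition \ref{pr:4.1}(i), pick $c^{ij}_m\in G^{+}$ with $\psi(c^{ij}_m)\to y_j-x_i$; then in $G$ one has $(u^i_n-v^j_n)^{+}\le\bigl((u^i_n-v^j_n)+c^{ij}_m\bigr)^{+}\le\bigl|(u^i_n-v^j_n)+c^{ij}_m\bigr|$, and since $\psi\bigl((u^i_n-v^j_n)+c^{ij}_m\bigr)\to 0$, property (v) of the pseudonorm together with a diagonal choice $m=m(n)$ yields that the positive corrections $e_n:=\bigvee_{i,j}(u^i_n-v^j_n)^{+}\in G^{+}$ satisfy $\psi(e_n)\to 0$. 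With $\tilde v^j_n:=v^j_n+e_n$ we then have $\psi(\tilde v^j_n)\to y_j$ and $u^i_n\le\tilde v^j_n$ in $G$ for all $i,j,n$. Interpolating in the lattice $G$ (for instance $z_n:=u^1_n\vee u^2_n$) produces $z_n\in G$ with $u^i_n\le z_n\le\tilde v^j_n$ for all $i,j,n$.

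The main obstacle is step (3): producing $z=\lim_n\psi(z_n)$ in $\overline G$. Because $d_s$-smallness of an element of $G$ gives no control over the $d_s$-size of its positive part, the sequence $\{\psi(z_n)\}$ is not Cauchy for formal reasons, so instead of a direct Cauchy argument I would exploit the monotone completeness of $\overline G$: pass to the running suprema $Z_n:=z_1\vee\cdots\vee z_n$ in $G$, capped below the approximants (e.g. $z_n$ replaced by $z_n\wedge\tilde v^1_n\wedge\tilde v^2_n$ before taking running maxima) so as not to overshoot the $y_j$, so that the $\psi$-images form an increasing net that is bounded above by a fixed element of $G$ — using that $\psi(u)$ is an order unit of $\overline G$ — and invoke Proposition \ref{pr:4.4} to obtain its supremum $z$. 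One then checks, via Proposition \ref{pr:4.3} applied to $\psi(u^i_k)\le(\text{the net})$ and to $(\text{the net})\le\psi(\tilde v^j_n)$, that $x_i\le z\le y_j$, so $z$ is the required interpolant. Organizing a single net that is simultaneously increasing, bounded above, and squeezed between the lower and upper approximating nets — so that the capping does not destroy monotonicity — is where the real work of the proof lies; everything else reduces to the routine approximation pattern supported by Propositions \ref{pr:4.1}, \ref{pr:4.3} and \ref{pr:4.4}.
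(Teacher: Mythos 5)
The decisive step of your argument is missing, and the device you propose in its place does not work as stated. In step (3) you concede that ``organizing a single net that is simultaneously increasing, bounded above, and squeezed between the lower and upper approximating nets'' is the real work; but the natural implementation fails: if you cap, say $w_k:=z_k\wedge\tilde v^1_k\wedge\tilde v^2_k$ and take running joins $Z_n=w_1\vee\cdots\vee w_n$, monotonicity is saved but every usable upper estimate is lost, since $w_k\le\tilde v^j_k$ does not give $Z_n\le\tilde v^j_n$ (the approximants $\tilde v^j_n$ are only $d_s$-close to $y_j$, not order-decreasing), so Proposition \ref{pr:4.3} cannot be applied to get $z\le y_j$; and the claim that the net is bounded above by a fixed element because ``$\psi(u)$ is an order unit of $\overline G$'' is nowhere justified -- nothing in Propositions \ref{pr:4.1}--\ref{pr:4.4} establishes this, and metric closeness yields no order bound. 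There is also a gap in your preliminary correction step: from $(u^i_n-v^j_n)^{+}\le\bigl|(u^i_n-v^j_n)+c^{ij}_m\bigr|$ and property (v) you only obtain $|(u^i_n-v^j_n)^{+}|_s\le\bigl|\,|(u^i_n-v^j_n)+c^{ij}_m|\,\bigr|_s$, i.e.\ you need smallness of the $s$-size of the \emph{absolute value}, whereas $\psi\bigl((u^i_n-v^j_n)+c^{ij}_m\bigr)\to 0$ only controls the $s$-size of the element itself -- exactly the phenomenon you yourself invoke later (``$d_s$-smallness gives no control over the positive part''). Hence $\psi(e_n)\to 0$ does not follow, and the order relations $u^i_n\le\tilde v^j_n$ in $G$ are not secured by corrections that tend to $0$.

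The paper's proof (modelled on \cite[Thm 12.7]{Goo}) avoids monotone completeness altogether and resolves the obstacle differently. It chooses approximants $x_{in},y_{jn}$ with errors $<1/2^{n+5}$, writes the successive differences and the defects $t_{ij}-y_{jn}+x_{in}$ as differences of \emph{positive} elements with small $s$-pseudonorm of their sum, thereby producing $a_n,b_n\in G^{+}$ with $|a_n|_s<1/2^{n+2}$, $|b_n|_s<1/2^{n+1}$, $x_{in}-a_n\le x_{i,n+1}\le x_{in}+a_n$ (likewise for the $y$'s) and $x_{in}\le y_{jn}+b_n$; then it constructs $z_{n+1}$ by interpolating in the lattice $G$ not only between the $x_{i,n+1}$ and the $y_{j,n+1}+b_{n+1}$ but also between $z_n-b_n-a_n$ and $z_n+a_n$. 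It is this extra pair of constraints, feeding the previously built $z_n$ (corrected by the small elements $a_n,b_n$) back into the interpolation at the next stage, that forces $|z_{n+1}-z_n|_s<1/2^n$, i.e.\ Cauchyness of $\{z_n\}$, and this is precisely the idea your sketch lacks; the interpolant is then $z=\lim_n\psi(z_n)$, and $x_i\le z\le y_j$ follows from Proposition \ref{pr:4.3} because $\psi(b_n)\to 0$. If you want to salvage your route via Proposition \ref{pr:4.4}, you would first have to produce order-monotone approximating sequences, which amounts to the same kind of careful correction argument the paper performs.
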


\begin{proof}
We show that $\overline G$ has interpolation, that is, if $x_1,x_2 \le y_1,y_2$ for $x_1,x_2,y_1,y_2 \in\overline G$, there is a $z \in \overline G$ such that $x_1,x_2 \le z \le y_1,y_2$. To prove this we follow ideas of the proof of \cite[Thm 12.7]{Goo}.

There are four sequences of elements of $G$, $\{x_{in}\}_n$, $\{y_{in}\}_n$ for $i=1,2$, such that
$$\overline d_s(\psi(x_{in}),x_i)< 1/2^{n+5} \quad \text{and} \quad
\overline d_s(\psi(y_{jn}),y_j)< 1/2^{n+5}
$$
for all $i,j =1,2$ and all $n$. For all $i,n$, we have
\begin{align*}
|x_{i,n+1}-x_{in}|_s &= \overline d_s(\psi(x_{i,n+1}),\psi(x_{in})) \le
\overline d_s(\psi(x_{i,n+1}),x_i)+ \overline d_s(x_i,\psi(x_{in}))\\
&< 1/2^{n+6}+1/2^{n+5} < 1/2^{n+4}.
\end{align*}
Similarly, $|y_{j,n+1}-y_{jn}|_s<1/2^{n+4}$. We shall  construct Cauchy sequences $\{b_n\}$ and $\{z_n\}$ of elements in $G$ such that $b_n \to 0$ and $x_{in}\le z_n\le y_{jn}+b_n$ for all $i,j,n$. The limit $\{\psi(z_n)\}$ provides an element in $\overline G$ to interpolate between $x_1,x_2$ and $y_1,y_2$.

We first construct elements $a_1,a_2,\ldots $ in $G$ such that $|a_n|_s< 1/2^{n+2}$ for all $n$ and also
$$
x_{in}-a_n\le x_{i,n+1}\le x_{in}+a_n \quad \text{and} \quad
y_{jn}-a_n\le y_{j,n+1}\le y_{jn}+a_n
$$
for all $i,j,n$.

For each $i,n$, we have $\overline d_s(\psi(x_{i,n+1}),\psi(x_{in}))< 1/2^{n+4}$, so that
$$
x_{i,n+1}-x_{in} = p_{in}-q_{in}
$$
for some $p_{in},q_{in} \in G^+$ satisfying $|p_{in}+q_{in}|_s< 1/2^{n+4}$. Similarly, each
$$
y_{j,n+1}-y_{in} = r_{jn}-s_{jn}
$$
for some $r_{jn},s_{jn} \in G^+$ satisfying $|r_{jn}+s_{jn}|_s< 1/2^{n+4}$.
Set
$$
a_n:= p_{1n}+q_{1n}+p_{2n}+q_{2n}+  r_{1n}+s_{1n} + r_{2n}+s_{2n}
$$
for all $n$. Then $|a_n|_s < 4 /2^{n+4}=1/2^{n+2}$. Moreover,

\begin{align*}
x_{in}-a_n &\le x_{in}-q_{in} = x_{i,n+1}-p_{in} \le x_{i,n+1}\\
&\le x_{i,n+1} +q_{in} = x_{in}+p_{in} \le x_{in} +a_n,\\
y_{jn}-a_n &\le y_{jn}-s_{jn} = y_{j,n+1}-r_{jn} \le y_{j,n+1}\\
&\le y_{j,n+1} +s_{jn} = y_{jn}+r_{jn} \le y_{jn} +a_n
\end{align*}
for all $i,j,n$.

Next, we construct elements $b_1,b_2,\ldots $ in $G^+$ such that $|b_n|_s < 1/2^{n+1}$ for all $n$, while also $x_{in}\le y_{jn}+b_n$ for all $i,j,n$.

Fix $n$ for a while. Since each $y_j-x_i$ lies in $\overline G^+$, we have
$$
\overline d_s(\psi(t_{ij}),y_j-x_i)< 1/2^{n+4}
$$
for some $t_{ij} \in G^+$. Then

\begin{align*}
|t_{ij}- (y_{jn}-x_{in})|_s&= \overline d_s(\psi(t_{ij}),\psi(y_{jn})-\psi(x_{in}))\\
&\le \overline d_s(\psi(t_{ij}),y_{j}-x_{i}) +\overline d_s(y_j,\psi(y_{jn})) + \overline d_s(\psi(x_{in}),x_i)\\
&< 1/2^{n+4}+ 1/2^{n+5} + 1/2^{n+5}= 1/2^{n+3},
\end{align*}
and consequently,
$$
t_{ij}-y_{jn}+x_{in}=u_{ij}-v_{ij}
$$
for some $u_{ij},v_{ij}\in G^+$ satisfying $|u_{ij}+v_{ij}|_s< 1/2^{n+3}$. Set
$$
b_n:= u_{11}+u_{12}+u_{21}+u_{22},
$$
then $|b_n|_s\le \sum |u_{ij}+v_{ij}|_s < 4/2^{n+3}=1/2^{n+1}$. Moreover,
$$
x_{in}\le x_{in}+t_{ij}=y_{jn}+u_{ij}-v_{ij}\le y_{jn}+u_{ij} \le y_{jn}+b_n
$$
for all $i,j$.

Finally, we construct elements $z_1,z_2,\ldots $ in $G$ such that
$$
x_{in}\le z_n\le y_{ij}+b_n
$$
for all $i,j,n$, while also $|z_{n+1}-z_n|_s \le 1/2^n$.

As $x_{i1}\le y_{j1}+ b_1$ for all $i,j$, interpolation in $G$ immediately provides us an element $z_1$. Now suppose that $z_1,\ldots,z_n$ have been constructed, for some $n$. Then
\begin{align*}
x_{i,n+1}\le y_{j,n+1}+b_{n+1},\quad x_{i,n+1}\le x_{in}+a_n \le z_n +a_n,\\
z_n-b_n-a_n \le y_{jn} - a_n \le y_{j,n+1} \le y_{j,n+1} +b_{n+1}
\end{align*}
for all $i,j$. Hence, there exists $z_{n+1}\in G$ such that
\begin{eqnarray*}
x_{i,n+1} &  \le z_{n+1} &\le  y_{1,n+1}+ b_{n+1}\\
x_{2,n+1} &\le z_{n+1} &\le y_{2,n+1} + b_{n+1}\\
z_n-b_n -a_n & \le z_{n+1}  & \le  z_n +a_n.
\end{eqnarray*}
Since $-(a_n+b_n)\le z_{n+1}-z_n \le a_n \le a_n+b_n$, we conclude from property (v) of $|\cdot|_s$ that
$$
|z_{n+1}-z_n|_s \le |a_n +b_n|_s = |a_n|_s + |b_n|_s < 1/2^{n+2}+1/2^{n+1}< 1/2^n
$$
which completes the induction.

The sequence $\{z_n\}$ is a Cauchy sequence in $G$, and hence, there is $z \in \overline G$ such that $\psi(z_n)\to z$. In view of
$$
\overline d_s(\psi(b_n),0)= |b_n|_s < 1/2^{n+1}
$$
for all $n$, we also have $\psi(b_n) \to 0$. Since
$$
\psi(x_{in}) \le \psi(z_n) \le \psi(y_{jn})+\psi(b_n)
$$
for all $i,j,n$, we have finally $x_i \le z \le y_j$ for all $i,j$ which proves that $\overline G$ has interpolation.
\end{proof}

\begin{theorem}\label{th:4.6}
Let the conditions of Proposition {\rm \ref{pr:4.4}} hold. Then $\overline G$ is a Dedekind complete $\ell$-group.
\end{theorem}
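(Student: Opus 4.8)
The plan is to exploit what is already in hand: by Propositions \ref{pr:4.1} and \ref{pr:4.5}, $\overline G$ is a directed po-group satisfying the interpolation property, and by Proposition \ref{pr:4.4} every monotone net in $\overline G$ that is bounded above converges to its supremum. From these three facts alone a short order-theoretic argument delivers that $\overline G$ is a Dedekind complete $\ell$-group; no further work with the metric $\overline d_s$ or with $\overline s$ will be needed.

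First I would check that $\overline G$ is a lattice (hence an $\ell$-group). Fix $x,y\in\overline G$. Since $\overline G$ is directed, the set $U$ of common upper bounds of $\{x,y\}$ is non-empty, and interpolation makes $U$ downward directed: given $u_1,u_2\in U$ there is $w$ with $x,y\le w\le u_1,u_2$, so $w\in U$. Thus $U$, ordered by the reverse of the order of $\overline G$, is a directed index set, and $u\mapsto -u$ is an increasing net with respect to it which is bounded above by $-x$. Proposition \ref{pr:4.4} then yields that $\sup\{-u:u\in U\}$ exists in $\overline G$, i.e. $\inf U$ exists. Since $x$ and $y$ are lower bounds of $U$, we get $x,y\le\inf U$, so $\inf U$ lies in $U$; being its least element, $\inf U=x\vee y$. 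Hence all binary suprema exist, and $\overline G$ is an $\ell$-group, with $x\wedge y=-((-x)\vee(-y))$.

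Next I would prove Dedekind completeness. Let $S\subseteq\overline G$ be non-empty and bounded above by some $b$. Using the lattice structure just obtained, let $F$ be the set of all finite joins $s_1\vee\cdots\vee s_n$ with $s_i\in S$. Then $F$ is upward directed (the join of two such elements is again a finite join of elements of $S$), is bounded above by $b$, contains $S$, and has exactly the same upper bounds as $S$; so $\sup F=\sup S$ whenever either side exists. The identity map of $F$ into $\overline G$ is an increasing net bounded above, so by Proposition \ref{pr:4.4} it converges to $\sup F$, which is therefore $\sup S$. Thus every bounded-above subset of $\overline G$ has a supremum, which is precisely Dedekind completeness.

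The only step requiring any care will be the first one — producing a genuine monotone net from the downward-directed family $U$ of upper bounds so as to realize $x\vee y$ as an infimum supplied by Proposition \ref{pr:4.4}; this is exactly where interpolation (Proposition \ref{pr:4.5}) is indispensable. Everything else is routine, and the analytic content has already been packaged into Propositions \ref{pr:4.1}--\ref{pr:4.5}.
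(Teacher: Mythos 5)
Your proof is correct and follows essentially the same route as the paper: use directedness plus interpolation (Proposition \ref{pr:4.5}) to make the relevant family of bounds directed, apply Proposition \ref{pr:4.4} to produce binary meets/joins (you work with upper bounds and negate, the paper with lower bounds directly), and then invoke Proposition \ref{pr:4.4} again for completeness. Your explicit finite-join directification of an arbitrary bounded set is a welcome spelling-out of a step the paper leaves implicit, but it is not a different method.
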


\begin{proof}
Let $x,y \in \overline G$. Let $A$ be the set of lower bounds for $\{x,y\}$. Then $A$ is a non-empty set. In view of Proposition \ref{pr:4.5}, $\overline G$ has interpolation, so that $A$ is an upwards directed set, and therefore, if $A$ is indexed by itself, $A$ satisfies condition of Proposition \ref{pr:4.4}, so that $A$ has supremum $a$ in $\overline G$, and clearly, $a = x\wedge y$.  Similarly, $(-x)\wedge (-y)$ exists in $\overline G$, and $-((-x)\wedge (-y))=x\vee y$ exists in $\overline G$ proving $\overline G$ is an $\ell$-group. Applying Proposition \ref{pr:4.4}, we see $\overline G$ is a Dedekind complete $\ell$-group.
\end{proof}

As an important corollary of the latter theorem we have that if $(R,1_R)= (\mathbb R^n,1_{\mathbb R^n})$, $n \ge 1$, then the metrical completion $\overline G$ of $G$ with respect to any $(R,1_R)$-state is a Dedekind complete $\ell$-group which generalizes \cite[Thm 12.7]{Goo}:

\begin{theorem}\label{th:4.7}
Let $s$ be any $(\mathbb R^n,1_{\mathbb R^n})$-state on a unital $\ell$-group $(G,u)$, $n\ge 1$. There is a metrical completion $\overline G$ of $G$ with respect to $s$ such that $\overline G$ is a Dedekind complete $\ell$-group.
\end{theorem}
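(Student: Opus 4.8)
The plan is to obtain Theorem~\ref{th:4.7} as a direct specialization of Theorem~\ref{th:4.6}, once the arbitrary unital $\ell$-group in the statement has been replaced by one meeting the hypotheses of Proposition~\ref{pr:4.4}. First I would record that $(R,1_R)=(\mathbb R^n,1_{\mathbb R^n})$ lies in the scope of Proposition~\ref{pr:4.4}: the Riesz space $\mathbb R^n$ is Dedekind complete, since any upper-bounded family of vectors has a coordinatewise supremum, so by Theorem~\ref{th:sigma} together with its addendum $(\mathbb R^n,1_{\mathbb R^n})$ has a canonical representation $(\mathbb R^n,1_{\mathbb R^n})\sim(C(T),1_T,\phi)$ with $T$ the hull-kernel space of maximal ideals. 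The maximal ideals of $\mathbb R^n$ are precisely the $n$ coordinate hyperplanes, so $|T|=n$; being finite and Hausdorff, $T$ is discrete, hence compact, and trivially extremally disconnected, since the closure of any open set is that set itself. Thus $(R,1_R)$ satisfies all the standing assumptions on the Riesz space in Proposition~\ref{pr:4.4} and Theorem~\ref{th:4.6}.

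Next I would dispose of the fact that, in contrast to Proposition~\ref{pr:4.4}, the group $(G,u)$ in Theorem~\ref{th:4.7} is arbitrary and $s$ need not be faithful. This is exactly the reduction anticipated in Remark~\ref{re:4.2}. Setting $I_s=\{x\in G\colon s(|x|)=0\}$ and $M=\Gamma(G,u)$, the ideal $I_s$ is the $\ell$-ideal of $G$ generated by $\Ker(s)$, and by Proposition~\ref{pr:3.2}(viii),(xii) the quotient $M/\Ker(s)$ is an Archimedean MV-algebra; hence $G_s=G/I_s$ is an Archimedean Abelian unital $\ell$-group carrying the induced $(R,1_R)$-state $\widehat{\tilde s}$, which is faithful in the sense that $\widehat{\tilde s}(x)=0$ with $x\ge 0$ forces $x=0$. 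By Remark~\ref{re:4.2}, the $d_{\widehat{\tilde s}}$-completion $\overline{G_s}$ furnished by Proposition~\ref{pr:4.1} is, by definition, a metrical completion $\overline G$ of $G$ with respect to $s$, into which $G$ embeds homomorphically.

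Finally I would invoke Theorem~\ref{th:4.6} with the data consisting of $(C(T),1_T,\phi)$, the Archimedean unital $\ell$-group $G_s$, and the $(R,1_R)$-state $\widehat{\tilde s}$ on $G_s$, concluding that $\overline G=\overline{G_s}$ is a Dedekind complete $\ell$-group, which is precisely the assertion of Theorem~\ref{th:4.7}. Since every step merely instantiates an already established result, I do not expect a genuine obstacle here; the only points requiring a line of care are the verification that a finite Hausdorff space is compact and extremally disconnected, and the routine bookkeeping in the passage to $G_s$ and $\widehat{\tilde s}$.
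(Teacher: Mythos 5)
Your proposal is correct and follows essentially the same route as the paper: the paper's proof likewise reduces to the faithful state $\widehat{\tilde s}$ on $G_s$ via Remark~\ref{re:4.2} and then invokes Theorem~\ref{th:4.6}, with the identification $(\mathbb R^n,1_{\mathbb R^n})\cong (C(T),1_T)$, $|T|=n$ discrete, already noted in the text preceding Proposition~\ref{pr:4.4}. Your added verifications (Dedekind completeness of $\mathbb R^n$, extremal disconnectedness of a finite discrete $T$) are just the details the paper leaves implicit.
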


\begin{proof}
By Remark \ref{re:4.2}, the metrical completion $\overline G$ of $G_s$ with respect to $\widehat {\tilde s}$ is in fact a metrical completion of $G$ with respect to $s$.  The desired result follows from Theorem \ref{th:4.6}.
\end{proof}

\begin{theorem}\label{th:4.8}
Let the conditions of Proposition {\rm \ref{pr:4.4}} hold with an $(R,1_R)$-state $s$ on $(G,u)$ and let $(R,1_R)=(C(T),1_T)$, where $T\ne \emptyset$ is a Hausdorff compact extremally disconnected topological space. If $G_0$ is an $\ell$-subgroup of $\overline G$ generated by $\psi(u)$, then the restriction $\overline s_0$ of $\overline s$ onto $G_0$ is an $(R,1_R)$-state on the unital Dedekind complete $\ell$-group $(G_0,\psi(u))$, where $\overline s$ is a continuous mapping defined in Proposition {\rm \ref{pr:4.1}(ii)}. In addition, $\overline s_0$ is an extremal $(R,1_R)$-state on $(G_0,\psi(u))$ if and only if so is $s$ on $(G,u)$.
\end{theorem}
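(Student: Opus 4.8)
The plan is to handle the two assertions in turn: that $\overline s_0$ is an $(R,1_R)$-state on $(G_0,\psi(u))$, and then the equivalence of extremality.

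For the first, identify $(R,1_R)$ with $(C(T),1_T)$ (legitimate since $R$ is Dedekind complete, so the canonical embedding $\phi$ is onto), so that $\overline s\circ\psi=s$ by Proposition~\ref{pr:4.1}(ii), and recall from Theorem~\ref{th:4.6} that $\overline G$ is a Dedekind complete $\ell$-group. The subobject $G_0\subseteq\overline G$ should be read as the $\ell$-ideal generated by $\psi(u)$, so that every element of $G_0$ is bounded in absolute value by a multiple of $\psi(u)$; then $\psi(u)$ is a strong unit of $G_0$, and any nonempty subset of $G_0$ bounded above in $G_0$ is bounded above in $\overline G$, hence has a supremum there lying between a member of the set and a multiple of $\psi(u)$ — so that supremum lies in $G_0$. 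Thus $(G_0,\psi(u))$ is a unital Dedekind complete $\ell$-group. Since $\psi(g)\in G_0$ for every $g\in G$ (because $|\psi(g)|\le\psi(|g|)\le n\psi(u)$) and $\psi(G)$ is dense in $\overline G$, the group $\psi(G)$ is $d_s$-dense in $G_0$. Finally $\overline s_0:=\overline s|_{G_0}$ is a positive homomorphism of po-groups with $\overline s_0(\psi(u))=s(u)=1_R$, i.e.\ an $(R,1_R)$-state on $(G_0,\psi(u))$.

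For the equivalence I would argue both implications by transporting convex decompositions along $\psi$. If $s$ is extremal and $\overline s_0=\lambda t_1+(1-\lambda)t_2$ with $(R,1_R)$-states $t_i$ on $(G_0,\psi(u))$ and $\lambda\in(0,1)$, then $t_i\circ\psi$ are $(R,1_R)$-states on $(G,u)$ summing (with weights $\lambda,1-\lambda$) to $s$, so $t_1\circ\psi=t_2\circ\psi$, i.e.\ $t_1=t_2$ on the dense set $\psi(G)$; since $0\le t_i\le\lambda^{-1}\overline s_0$ and $|t_i(g)|\le t_i(|g|)$ pointwise on $T$, continuity of $\overline s_0$ together with continuity of the lattice operations of $\overline G$ gives $d_s$-continuity of each $t_i$, whence $t_1=t_2$ on all of $G_0$. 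Conversely, if $\overline s_0$ is extremal and $s=\lambda s_1+(1-\lambda)s_2$ with $(R,1_R)$-states $s_i$ on $(G,u)$, then $0\le s_i\le\lambda^{-1}s$ yields a Lipschitz bound $d_{s_i}\le\lambda^{-1}d_s$ via property~(v) of $|\cdot|_s$, so each $s_i$ extends uniquely to a continuous positive po-group homomorphism $\overline G\to C(T)$; its restriction to $G_0$ is an $(R,1_R)$-state, the two restrictions decompose $\overline s_0$, hence coincide, and precomposing with $\psi$ returns $s_1=s_2$.

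The step I expect to be the main obstacle is exactly this passage between the $s$-metric and the $s_i$-metrics (equivalently, showing that an arbitrary $(R,1_R)$-state on $(G_0,\psi(u))$ is automatically $d_s$-continuous). It amounts to bounding $\|\overline s(|x|)\|_T$ by a multiple of $\|\overline s(x)\|_T$ — that is, ruling out a ``collapse of disjointness'' under $\psi$ and $\overline s$ that the $s$-norm cannot detect — and I would devote most of the effort to establishing, in the style of \cite[Chap~12]{Goo}, that $\vee,\wedge$ are $d_s$-continuous on $\overline G$ and that $\psi$ embeds $G$ as an $\ell$-subgroup. With those in hand the two implications above close up, and Lemma~\ref{le:3.3} lets one reduce from a general $s$ on $M$ to the faithful situation already arranged before Proposition~\ref{pr:4.1}.
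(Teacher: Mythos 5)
Your overall architecture is the same as the paper's: identify $(R,1_R)$ with $(C(T),1_T)$, read $G_0$ as the convex $\ell$-subgroup generated by $\psi(u)$ (so $\psi(G)\subseteq G_0$ and $(G_0,\psi(u))$ is unital and Dedekind complete by Theorem \ref{th:4.6}), note $\overline s_0(\psi(u))=1_R$, and then transport convex decompositions back and forth along $\psi$, using the dominations $0\le t_i\le \lambda^{-1}\overline s_0$ resp. $0\le s_i\le\lambda^{-1}s$. The gap is exactly the step you flag and do not prove: passing from agreement (or from the estimates) on $\psi(G)$ to all of $G_0$. The route you propose for it is not available. A bound $\|\overline s(|x|)\|_T\le C\,\|\overline s(x)\|_T$ fails already for ordinary real-valued states satisfying all hypotheses of Proposition \ref{pr:4.4}: take $G=\mathbb Z^2$ with $u=(1,1)$, $(R,1_R)=(\mathbb R,1)$ and $s(a,b)=\tfrac12(a+b)$ (faithful on $G^+$), and $x=(k,-k)$; then $s(x)=0$ while $s(|x|)=k$, so no constant works, the map $x\mapsto|x|$ is not continuous for the metric $\overline d_s(g,h)=\|\overline s(g-h)\|_T$ of Proposition \ref{pr:4.1}(iii), and an $(R,1_R)$-state on $(G_0,\psi(u))$ dominated by $\lambda^{-1}\overline s_0$ cannot be handled by a Lipschitz estimate of the kind you invoke. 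So, as formulated, your ``main obstacle'' is not merely hard; it is a dead end.

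The paper closes the argument without any such continuity lemma by staying inside the positive cone and approximating monotonically from below: for $g\in G_0^+$ it takes $x_n\in G^+$ with $\psi(x_n)\le\psi(x_{n+1})\le g$ and $\overline d_s(\psi(x_n),g)\to 0$, so that every difference $g-\psi(x_n)$ is \emph{positive}. Then the only estimate needed is pointwise domination on the positive cone: in the forward direction $0\le m_i(g)-m_i(\psi(x_n))\le\lambda^{-1}\overline s_0(g-\psi(x_n))$, hence $\|m_i(g)-m_i(\psi(x_n))\|_T\le\lambda^{-1}\overline d_s(g,\psi(x_n))\to 0$ and $m_i(g)=\lim_n s_i(x_n)=\lim_n s(x_n)=\overline s_0(g)$, giving $m_1=m_2=\overline s_0$ on $G_0$ (first on $G_0^+$, then by directedness); in the converse direction the same estimate $0\le s_i(x_m-x_n)\le\lambda^{-1}s(x_m-x_n)$ for $x_n\le x_m$ shows $\{s_i(x_n)\}$ is Cauchy in $C(T)$ and is used to \emph{construct} extensions $m_i$ on $G_0^+$ (independence of the chosen increasing sequence must be checked), after which $\overline s_0=\lambda m_1+(1-\lambda)m_2$ and extremality of $\overline s_0$ force $s_1=s_2$. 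If you wish to salvage your density-plus-continuity route, observe that the proofs of Propositions \ref{pr:4.1} and \ref{pr:4.5} implicitly work with the pseudonorm $\|\phi(\hat s(|x|))\|_T$ rather than $\|\phi(\hat s(x))\|_T$; under that reading $\bigl||a|-|b|\bigr|\le|a-b|$ makes the lattice operations $1$-Lipschitz and your argument closes, but then the issue is no longer one of comparing $\overline s(|x|)$ with $\overline s(x)$, and your statement of the key step should be reformulated accordingly.
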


\begin{proof}
Let $s$ be an $(R,1_R)$-state on $(G,u)$ and let $G_0$ be the $\ell$-subgroup of $\overline G$ generated by $\psi(u)$. Due to Theorem \ref{th:4.6}, $(G_0,\psi(u))$ is an Abelian Dedekind complete unital $\ell$-group.  By Proposition \ref{pr:4.1}(ii), there is a unique continuous mapping $\overline s: \overline G\to (R,1_R)=(C(T),1_T)$ such that $\phi\circ s = \overline s\circ \psi $ (the mapping $\phi: R \to C(T)$ is the identity). Consequently, $\overline s_0$ is an $(R,1_R)$-state on $(G_0,\psi(u))$.

Assume that $s$ is an extremal $(R,1_R)$-state and let $\overline s_0=\lambda m_1 +(1-\lambda) m_2$, where $m_1,m_2$ are $(R,1_R)$-states on $(G_0,\psi(u))$ and $\lambda \in (0,1)$. The mappings $s_i(x):=m_i(\psi(x))$, $x \in G$, are $(R,1_R)$-states on $(G,u)$ for each $i=1,2$, and $s(x)=\lambda s_1(x)+(1-\lambda)s_2(x)$, $x \in G$. The extremality of $s$ entails $s(x)=s_1(x)=s_2(x)$ for each $x \in G$. We have to show that $\overline s_0=m_1=m_2$.

Since $\overline s_0(g), m_1(g),m_2(g)$ are in fact continuous functions on $T$, then they are positive functions for each $g \in \overline G^+_0$, and hence, $\overline s_0(g)/\lambda \ge m_1(g)$ and $\overline s_0(g)/ (1-\lambda)\ge m_2(g)$ for each $g \in \overline G_0^+$.
For any $g \in G^+_0$, there is a sequence $\{x_n\}$ in $G^+$ such that $\psi(x_n)\le \psi(x_{n+1})\le g$ and $\psi(x_n)\to g$. Then
$\|m_1(g)-m_1(\psi(x_n))\|_T\le \|\overline s(g)- \overline s(\psi(x_n))\|_T/\lambda \to 0$ and whence, $m_1(g)=\lim_n m_1(\psi(x_n))= \lim_n s_1(x_n)= \lim_n s(x_n)=\overline s(g)$. In a similar way we have $m_2(g)=\lim_n s_2(x_n)= \lim_n s(x_n)=\overline s(g)$. Then $m_1(g)=m_2(g)=\overline s(g)=\overline s_0(g)$ for each $g \in  G_0$ which shows that $\overline s_0$ is an extremal $(R,1_R)$-state on $(G_0,\psi(u))$.

Conversely, let $\overline s_0$ be an extremal $(R,1_R)$-state on $(G_0,\psi(u))$ and let $s=\lambda s_1 +(1-\lambda)s_2$, where $s_1,s_2$ are $(R,1_R)$-states on $(G,u)$ and $\lambda \in (0,1)$. We define mappings $m_i: G_0\to R$ for $i=1,2$ as follows. First, we put $m_i(\psi(x))=s_i(x)$ for $x \in G^+$ and $i=1,2$. Then each $m_i$ is a well-defined mapping on $\psi(G)^+$. Now let $g \in G^+_0$. There is a sequence $\{x_n\}$ in $G^+$ with $\psi(x_n) \le \psi(x_{n+1})\le g$ such that $g= \lim_n \psi(x_n)$. Since for continuous functions we have $0\le \psi(x_m)-\psi(x_n)$ for each $m\ge n$, then $\|s_1(\psi(x_m))-s_1(\psi(x_n))\|_T \le \|\overline s_0(\psi(x_m))-\overline s_0(\psi(x_n))\|_T/ \lambda \to 0$ and $\|s_2(\psi(x_m))-s_2(\psi(x_n))\|_T \le \|\overline s_0(\psi(x_m))-\overline s_0(\psi(x_n))\|_T/ (1-\lambda) \to 0$ when $m,n\to \infty$. Then $\{s_i(\psi(x_n))\}$ is a Cauchy sequence in $C(T)= R$, and there is $f_i\in C(T)^+$ such that $s_i(\psi(x_n)) \rightrightarrows f_i$ for $i=1,2$. If $\{y_n\}$ in $G^+$ is another sequence in $G^+_0$ such that $\psi(y_n) \le \psi(y_{n+1})\le g$ and $g= \lim_n \psi(y_n)$, then $\lim_ns_i((\psi(x_n)))=f_i=\lim_n s_i(\psi(y_n))$, and therefore, the extension of $m_i$ to $G^+_0$ is defined by $m_i(g):=\lim_n m_i(\psi(x_n))$ whenever $\{x_n\}$ is a sequence in $G^+$ with $\psi(x_n) \le \psi(x_{n+1})\le g$ such that $g= \lim_n \psi(x_n)$. Finally, $m_i$ can be extended to the whole $G_0$, so that every $m_i$ is an $(R,1_R)$-state on $(G,u)$, and $\overline s_0= \lambda m_1 +(1-\lambda)m_2$. This yields $\overline s_0=m_1=m_2$ and consequently, $s=s_1=s_2$ proving $s$ is an extremal $(R,1_R)$-state on $(G,u)$.
\end{proof}

\section{Lattice Properties of $R$-measures and Simplices}

In this section we extend the notion of an $(R,1_R)$-state to $R$-measures and $R$-Jordan signed measures on a pseudo MV-algebra. If $R$ is a Dedekind complete Riesz space, we show that the space of $R$-Jordan signed measures can be converted into a Dedekind complete Riesz space. This allows us to show when the space of $(R,1_R)$-states on a pseudo MV-algebra is a Choquet simplex or even a Bauer simplex. In addition, we show when every state is a weak limit of a net of convex combinations of $(R,1_R)$-state-morphisms.

Thus let $M$ be a pseudo MV-algebra and $R$ be a Riesz space.  A mapping $m:M \to R$ is said to be an $R${\it -signed measure} if $m(x+y)=m(x)+m(y)$ whenever $x+ y$ is defined in $M$. An $R$-signed state is (i) an $R${\it -measure} if $m(x)\ge 0$ for each $x \in M$, (ii) an $R${\it -Jordan signed measure} if $m$ is a difference of two $R$-measures. It is clear that (i) every $(R,1_R)$-state is an  $R$-measure, (ii) $m(0)=0$ for each $R$-signed measure $m$, (iii) if $x\le y$, then $m(x)\le m(y)$ whenever $x\le y$ for each $R$-measure $m$. We denote by $\mathcal{JSM}(M,R)$ and $\mathcal M(M,R)$ the set of $R$-Jordan signed measures and $R$-measures, respectively, on $M$. Then $\mathcal{JSM}(M,R)$ is a real vector space and if for two $R$-Jordan signed measures $m_1$ and $m_2$ we put $m_1\le^+m_2$, then $\mathcal{JSM}(M)$ is an Abelian po-group with respect to the partial order $\le^+$ with positive cone $\mathcal M(M,R)$. Using ideas from \cite[p. 38--41]{Goo}, we show that $\mathcal{JSM}(M,R)$ is a Dedekind complete Riesz space whenever $R$ is a Dedekind complete Riesz space. We note that in \cite{Goo} this was established for Abelian interpolation po-groups $G$ whereas we have functions on $M$ with the partial operation $+$ that is not assumed to be commutative a priori.

In this section, let $R$ be a Dedekind complete Riesz space and $M$ be a pseudo MV-algebra.

A mapping $d:M\to  R$ is said to be {\it subadditive}
provided $d(0) = 0$ and $d(x+y)\le d(x)+d(y)$ whenever  $x+y \in M$.

\begin{proposition}\label{pr:3.1}
Let $M$ be a pseudo MV-algebra, $R$ a Dedekind complete Riesz space, and let $d:M\to R$ be a subadditive mapping. For all $x\in M$, assume that
the set
\begin{equation}\label{eq:D(x)}
D(x):=\{d(x_1)+\cdots+d(x_n): x = x_1+\cdots+x_n, \ x_1,\ldots,x_n
\in M,\ n\ge 1 \}
\end{equation}
is bounded above in $R$.  Then there is an $R$-signed measure
$m:M\to R$ such that $m(x)=\bigvee D(x)$ for all $x\in M$.
\end{proposition}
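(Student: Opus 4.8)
The plan is to define $m\colon M\to R$ directly by $m(x):=\bigvee D(x)$ and to verify that this formula produces an $R$-signed measure. First I would note that $m$ is well defined: each $D(x)$ contains $d(x)$ (the trivial decomposition $x=x$, $n=1$) and is bounded above by hypothesis, so Dedekind completeness of $R$ guarantees that $\bigvee D(x)$ exists in $R$; in particular $m(x)\ge d(x)$ for all $x$. Since $x_1+\cdots+x_n=0$ with all summands in $M$ forces each $x_i=0$, we have $D(0)=\{0\}$ and hence $m(0)=0$.

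The real content is the additivity $m(x+y)=m(x)+m(y)$ whenever $x+y$ is defined in $M$, and I would establish it by proving two inequalities. For $m(x+y)\le m(x)+m(y)$, take an arbitrary decomposition $x+y=z_1+\cdots+z_n$ and apply the strong Riesz Decomposition Property $\mathrm{RDP}_2$ of (\ref{eq:RDP1})--(\ref{eq:RDP2}) to this equality, viewing the left-hand side as the two-term system $x,y$: this yields elements $c_{ij}\in M$ ($i=1,2$, $1\le j\le n$) with $x=c_{11}+\cdots+c_{1n}$, $y=c_{21}+\cdots+c_{2n}$, and $z_j=c_{1j}+c_{2j}$ for each $j$. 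Subadditivity of $d$ gives $d(z_j)\le d(c_{1j})+d(c_{2j})$, and summing over $j$ yields $d(z_1)+\cdots+d(z_n)\le\big(d(c_{11})+\cdots+d(c_{1n})\big)+\big(d(c_{21})+\cdots+d(c_{2n})\big)\le m(x)+m(y)$, because $d(c_{11})+\cdots+d(c_{1n})\in D(x)$ and $d(c_{21})+\cdots+d(c_{2n})\in D(y)$. As the decomposition of $x+y$ was arbitrary, $m(x)+m(y)$ is an upper bound of $D(x+y)$, so $m(x+y)\le m(x)+m(y)$.

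For the reverse inequality, I would observe that whenever $x=x_1+\cdots+x_p$ and $y=y_1+\cdots+y_q$ are decompositions in $M$ and $x+y$ is defined, associativity of the partial addition (equivalently, working inside the interval $[0,u]$ of the representing unital $\ell$-group) makes $(x_1,\ldots,x_p,y_1,\ldots,y_q)$ a summable system with sum $x+y$; hence $\big(d(x_1)+\cdots+d(x_p)\big)+\big(d(y_1)+\cdots+d(y_q)\big)\in D(x+y)$. This shows $D(x)+D(y)\subseteq D(x+y)$, and since $\bigvee(A+B)=\bigvee A+\bigvee B$ for bounded-above subsets $A,B$ of an $\ell$-group whose suprema exist, we get $m(x+y)=\bigvee D(x+y)\ge\bigvee\big(D(x)+D(y)\big)=\bigvee D(x)+\bigvee D(y)=m(x)+m(y)$. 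Combining the two inequalities gives additivity, so $m$ is an $R$-signed measure with $m(x)=\bigvee D(x)$ for all $x$.

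I expect the only nonroutine step to be the inequality $m(x+y)\le m(x)+m(y)$: one has to split an arbitrary common refinement of $x+y$ into pieces aligned with $x$ and with $y$, which is precisely what $\mathrm{RDP}_2$ delivers, so once one commits to invoking (\ref{eq:RDP1})--(\ref{eq:RDP2}) the argument is essentially forced. A minor point to watch is that $+$ on $M$ need not be commutative, so the order of the concatenated system $(x_1,\ldots,x_p,y_1,\ldots,y_q)$ must be kept fixed and one relies on associativity alone; but since all sums in play live in $[0,u]$, summability of the concatenation and of the $c_{ij}$-systems is automatic from the $\ell$-group representation.
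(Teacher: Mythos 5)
Your proposal is correct and follows essentially the same route as the paper's proof: define $m(x)=\bigvee D(x)$, obtain $m(x)+m(y)\le m(x+y)$ by concatenating decompositions of $x$ and $y$ and using that suprema distribute over addition in the Dedekind complete (Abelian) $\ell$-group $R$, and obtain the reverse inequality by splitting an arbitrary decomposition of $x+y$ via RDP$_2$ as in (\ref{eq:RDP1})--(\ref{eq:RDP2}) and applying subadditivity of $d$. The only difference is the order in which the two inequalities are proved, which is immaterial.
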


\begin{proof}
The map $m(x):=\bigvee D(x)$ is a well-defined mapping for all $x
\in M$. It is clear that $m(0)=0$ and now we show that
$m$ is additive on $M$.

Let $x+y \in M$ be given. For all decompositions
$$ x = x_1 +\cdots+x_n \ \mbox{and} \ y=y_1+\cdots +y_k$$
with all $x_i,y_j \in M$, we have $x+y = x_1+\cdots+x_n + y_1+\cdots
+ y_k$, which yields

$$ \sum_i d(x_i)+\sum_j d(y_j) \le m(x+y).
$$
Therefore, $s+t \le m(x+y)$ for all $s,t\in D(x)$.
Since $R$ is a Dedekind complete Abelian $\ell$-group, $\bigvee$ is distributive with respect to $+$, see \cite[Prop 1.4]{Goo}. Whence
\begin{eqnarray*}
m(x)+m(y)&=& \left(\bigvee D(x)\right) +m(y) = \bigvee_{s \in D(x)} (s+m(y))\\
&=& \bigvee_{u\in D(x)} \left(s+ \left(\bigvee D(y)\right)\right) =
\bigvee_{s\in
D(x)}\bigvee_{y \in D(y)} (s+t)\\
&\le& m(x+y).
\end{eqnarray*}

Conversely, let $x+y=z_1+\cdots+z_n$ be a decomposition of $x+y$ where each $z_i \in M$. Then the strong Riesz decomposition Property
RDP$_2$ with (\ref{eq:RDP1})--(\ref{eq:RDP2}) implies that there are elements $x_1,\ldots,x_n, y_1,\ldots, y_n
\in M$ such that $x = x_1+\cdots+x_n$, $y = y_1+\cdots+y_n$ and $z_i
= x_i+y_i$ for $i=1,\dots,n$. This yields
$$
\sum_i d(z_i) \le \sum_i (d(x_i)+d(y_i)) = \left(\sum_i
d(x_i)\right) + \left(\sum_i d(y_i)\right) \le m(x)+m(y),
$$
and therefore, $m(x+y)\le m(x)+m(y)$ and finally, $m(x+y)=m(x)+m(y)$
for all $x,y \in M$ such that $x+y$ is defined in $M$, so that $m$
is an $R$-signed measure on $M$.
\end{proof}

\begin{theorem}\label{th:3.4} Let $M$ be a pseudo MV-algebra and $R$ be a Dedekind complete Riesz space. For the set $\mathcal J(M,R)$ of $R$-Jordan signed measures on $M$ we have:

\begin{enumerate}

\item[(a)] $\mathcal J(M,R)$ is a Dedekind complete $\ell$-group with respect to the partial order $\le^+$.

\item[(b)] If $\{m_i\}_{i\in I}$ is a non-empty set of $\mathcal J(M,R)$
that is bounded above, and if $d(x)=\bigvee_i m_i(x)$ for all $x \in
M$, then
$$ \left(\bigvee_i m_i\right)(x) = \bigvee\{d(x_1)+\cdots + d(x_n):
x= x_1+\cdots + x_n, \ x_1,\ldots, x_n \in M\}
$$
for all $x \in M$.

\item[(c)] If $\{m_i\}_{i\in I}$ is a non-empty set of $\mathcal J(M,R)$
that is bounded below, and if $e(x)=\bigwedge_i m_i(x)$ for all $x
\in M$, then
$$ \left(\bigwedge_i m_i\right)(x) = \bigwedge\{e(x_1)+\cdots + e(x_n):
x= x_1+\cdots + x_n, \ x_1,\ldots, x_n \in M\}
$$
for all $x \in M$.
\item[(d)] The set $\mathcal J(M,R)$ is a Dedekind complete Riesz space.

\end{enumerate}
\end{theorem}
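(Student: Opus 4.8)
The plan is to follow the strategy of \cite[p.\ 38--41]{Goo}, with Proposition \ref{pr:3.1} doing the heavy lifting: every assertion of the theorem reduces to producing least upper bounds of bounded-above families, and Proposition \ref{pr:3.1} manufactures exactly such bounds. Since $\mathrm{RDP}_2$ is already built into Proposition \ref{pr:3.1}, the non-commutativity of the partial addition $+$ on $M$ causes no extra trouble beyond the need to argue decomposition-by-decomposition.

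\textbf{Parts (a) and (b).} Let $\{m_i\}_{i\in I}$ be a non-empty family in $\mathcal J(M,R)$ bounded above by some $m_0\in\mathcal J(M,R)$; as $m_0-m_i\in\mathcal M(M,R)$, we have $m_0(z)\ge m_i(z)$ in $R$ for all $z\in M$ and all $i$. Since $R$ is Dedekind complete, $d(x):=\bigvee_i m_i(x)$ exists in $R$ for each $x$, and $d$ is subadditive: $d(0)=0$, and for $x+y\in M$ we have $d(x+y)=\bigvee_i\bigl(m_i(x)+m_i(y)\bigr)\le d(x)+d(y)$. The set $D(x)$ of \eqref{eq:D(x)} is bounded above: for any decomposition $x=x_1+\cdots+x_n$ one has $d(x_j)\le m_0(x_j)$, hence $\sum_j d(x_j)\le\sum_j m_0(x_j)=m_0(x)$, independently of the decomposition. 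So Proposition \ref{pr:3.1} yields an $R$-signed measure $m^{*}$ with $m^{*}(x)=\bigvee D(x)$. Taking the trivial decomposition $x=x$ gives $m^{*}(x)\ge d(x)\ge m_i(x)$ for every $i$; in particular $m^{*}-m_{i_0}\in\mathcal M(M,R)$ for a fixed $i_0$, and writing $m_{i_0}=p-q$ with $p,q\in\mathcal M(M,R)$ we get $m^{*}=(m^{*}-m_{i_0}+p)-q\in\mathcal J(M,R)$, while the inequality $m^{*}(x)\ge m_i(x)$ shows $m^{*}\ge^{+}m_i$ for all $i$. If $n\in\mathcal J(M,R)$ satisfies $n\ge^{+}m_i$ for all $i$, then for each decomposition $x=x_1+\cdots+x_k$ we get $d(x_j)\le n(x_j)$, hence $\sum_j d(x_j)\le\sum_j n(x_j)=n(x)$, so $m^{*}(x)=\bigvee D(x)\le n(x)$, i.e.\ $n\ge^{+}m^{*}$. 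Thus $m^{*}=\bigvee_i m_i$ in $\mathcal J(M,R)$, which is formula (b); in particular every bounded-above family in $\mathcal J(M,R)$ has a supremum. Finally, $\mathcal J(M,R)=\mathcal M(M,R)-\mathcal M(M,R)$ is directed (if $m_1=p_1-q_1$ and $m_2=p_2-q_2$ with $p_k,q_k\in\mathcal M(M,R)$, then $p_1+p_2$ is an upper bound for $\{m_1,m_2\}$), so $\{m_1,m_2\}$ and $\{-m_1,-m_2\}$ are bounded above, and $m_1\vee m_2$ and $m_1\wedge m_2=-\bigl((-m_1)\vee(-m_2)\bigr)$ exist; hence $\mathcal J(M,R)$ is an $\ell$-group, Dedekind complete by what precedes, which proves (a).

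\textbf{Parts (c) and (d).} For (c), apply (b) to $\{-m_i\}_{i\in I}$ (bounded above whenever $\{m_i\}$ is bounded below). With $d'(x)=\bigvee_i(-m_i)(x)=-\bigwedge_i m_i(x)=-e(x)$, formula (b) gives
\begin{align*}
\Bigl(\bigvee_i(-m_i)\Bigr)(x)&=\bigvee\{-e(x_1)-\cdots-e(x_n):x=x_1+\cdots+x_n\}\\
&=-\bigwedge\{e(x_1)+\cdots+e(x_n):x=x_1+\cdots+x_n\};
\end{align*}
negating, together with $\bigwedge_i m_i=-\bigvee_i(-m_i)$, yields formula (c). Part (d) is then immediate: $\mathcal J(M,R)$ is a real vector space with partial order $\le^{+}$, an $\ell$-group that is Dedekind complete by (a), and if $m\ge^{+}0$ and $a\ge 0$ is real then $(am)(x)=a\,m(x)\ge 0$ in the Riesz space $R$, so $am\ge^{+}0$; hence $\mathcal J(M,R)$ is a Dedekind complete Riesz space.

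\textbf{Main obstacle.} The crux is verifying that the signed measure $m^{*}$ supplied by Proposition \ref{pr:3.1} both lies in $\mathcal J(M,R)$ (not merely in the space of $R$-signed measures) and is the \emph{least} upper bound of $\{m_i\}$. Both points come down to comparing $m^{*}$ with $m_{i_0}$ and with an arbitrary upper bound $n$, evaluated on refinements of decompositions; here the only departure from \cite{Goo} is that, since $+$ on $M$ need not be commutative, these estimates are carried out directly over all decompositions, with $\mathrm{RDP}_2$ (hidden inside Proposition \ref{pr:3.1}) supplying the refinements.
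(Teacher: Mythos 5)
Your proposal is correct and follows essentially the same route as the paper: form the subadditive map $d(x)=\bigvee_i m_i(x)$, bound $D(x)$ by an upper bound $m_0(x)$, invoke Proposition \ref{pr:3.1} to get the candidate supremum, check membership in $\mathcal J(M,R)$ and leastness by evaluating on decompositions, obtain (c) via the anti-automorphism $z\mapsto -z$, and finish (a) and (d) by directedness and scalar multiplication. No gaps; the only difference is that you spell out the directedness and the negation computation slightly more explicitly than the paper does.
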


\begin{proof} Let $m_0 \in \mathcal J(M,R)$ be an upper bound for $\{m_i\}_{i\in I}$. For any $x \in M$, we have $m_i(x)\le m_0(x)$, so that the mapping $d(x)=\bigvee_i m_i(x)$ defined on $M$ is  a subadditive mapping on the pseudo MV-algebra $M$.
For any $x \in M$ and any decomposition $x = x_1+\cdots + x_n$
with all $x_i \in M$, we conclude $d(x_1)+\cdots+ d(x_n)\le
m(x_1)+\cdots + m(x_n)\le m_0(x)$.  Hence, $m_0(x)$ is an upper bound for
$D(x)$ defined by (\ref{eq:D(x)}).

By Proposition \ref{pr:3.1}, we conclude that there is an $R$-signed measure $m:M \to R$ such that $m(x)=\bigvee D(x)$. For every $x \in M$ and
every $m_i$ we have $m_i(x)\le d(x)\le m(x)$, which gives $m_i \le^+
m$. The mappings $m-m_i$ are positive $R$-measures belonging to
$\mathcal J(M,R)$, so that $m-m_i=:f_i\in \mathcal M(M,R)$, and $m = m_i^++f_i - m_i^-$, where $m_i^+,m_i^-\in \mathcal M(M,R)$ and $m_i=m_i^+ -m_i^-$.
Consequently, $m \in \mathcal J(M,R)$. If $h \in \mathcal J(M,R)$ is an $R$-Jordan signed measure such that $m_i\le^+ h$ for any $i\in I$, then $d(x)\le h(x)$ for any $x \in
M$. As in the preceding paragraph, we can show that $h(x)$ is also an upper bound for $D(x)$, whence $m(x)\le h(x)$ for any $x \in M$, which gives $m\le
^+h$. In other words, we have proved that $m$ is the supremum of
$\{m_i\}_{i\in I}$, and its form is given by (b).

Now if we apply  the order anti-automorphism $z\mapsto - z$ holding in the Riesz space $R$, we see that if the set $\{m_i\}_{i\in I}$  in $\mathcal J(M,R)$ is bounded below, then it has an infimum given by (c).

It is clear that $\mathcal J(M,R)$ is directed.  Combining (b)
and (c), we see that $\mathcal J(M,R)$ is a Dedekind complete $\ell$-group.

(d) If $m$ is an $R$-Jordan signed measure on $M$ and $\alpha \in \mathbb R$, then clearly $\alpha m\in \mathcal J(M,R)$ and if, in addition $\alpha \ge 0$, then $\alpha m$ an $R$-measure whenever $m$ is an $R$-measure. Consequently, $\mathcal J(M,R)$ is a Dedekind complete Riesz space.
\end{proof}

Now let, for an Archimedean unital Riesz space $(R,1_R)$, $(C(T),1_T,\phi)$ be its canonical representation, i.e. $(R,1_R)\sim (C(T),1_T,\phi)$. We say that a net of $(R,1_R)$-states $\{s_\alpha\}_\alpha$ on a pseudo MV-algebra $M$ {\it converges weakly} to an $(R,1_R)$-state $s$ on $M$, and we write $\{s_\alpha\}_\alpha\stackrel{w} \rightarrow s$, if $\|\phi\circ s_\alpha(x)- \phi\circ s(x)\|_T\to 0$ for each $x \in M$.
We note that the weak convergence introduced in the proof of (4) of Proposition \ref{pr:3.19} is a special case of the present definition.

If $(R,1_R)=(\mathbb R,1)$, then $(R,1_R)$-states are usual states on pseudo MV-algebras, therefore, the weak convergence of $(R,1_R)$-states coincides with the weak convergence of states introduced in the beginning of Section 3.

First, we show that if, for a net of $(R,1_R)$-states on $M$, we have $\{s_\alpha\}_\alpha\stackrel{w} \rightarrow s$ and $\{s_\alpha\}_\alpha\stackrel{w} \rightarrow s'$, then $s=s'$.
Indeed, if $s'$ is another $(R,1_R)$-state on $M$ such that $\|\phi\circ s_\alpha(x)- \phi\circ s'(x)\|_T\to 0$ for each $x \in M$, then $\|\phi(s(x)-s'(x))\|_T\le \|\phi(s(x)-s_\alpha(x))\|_T+\|\phi(s_\alpha(x)-s'(x))\|_T\to 0$ so that $\phi(s(x))=\phi(s'(x))$ which proves $s(x)=s'(x)$ for each $x \in M$ and finally, we have $s=s'$.

We note that the weak convergence of $(R,1_R)$-states on $M$ can be defined also in another but equivalent form: Let $(R,1_R)$ be an Archimedean unital Riesz space. For any $r\in R$, we set
$$\|r\|_{1_R}:=\inf\{\alpha\in \mathbb R^+\colon |r|\le \alpha 1_R\}.
$$
Then $\|\cdot\|_{1_R}$ is a norm on $R$. In particular, for  each $f\in C(T)$, we have $\|f\|_T=\|f\|_{1_T}$.
In addition, if $(R,1_R)\sim (C(T),1_T,\phi)$, then $\|x\|_{1_R}=\|\phi(x)\|_T$ for each $x \in R$. Therefore,
a net $\{s_\alpha\}_\alpha$ of $(R,1_R)$-states converges weakly to an $(R,1_R)$-state $s$ iff $\lim_\alpha\|s_\alpha(x)-s(x)\|_{1_R}=0$ for each $x \in M$.

\begin{proposition}\label{pr:conv2}
Let $M$ be a pseudo MV-algebra and $(R,1_R)$ be a Dedekind $\sigma$-complete unital Riesz space. Then the space $\mathcal S(M,R,1_R)$ is either empty or a non-empty convex compact set under the weak convergence.
\end{proposition}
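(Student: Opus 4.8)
The plan is to settle convexity by a one-line computation and then obtain compactness by reducing, via the representation of Dedekind $\sigma$-complete unital Riesz spaces, to the compactness of the ordinary state space $\mathcal S(M)$. Convexity is immediate: if $s_1,s_2\in\mathcal S(M,R,1_R)$ and $\lambda\in[0,1]$, then $s:=\lambda s_1+(1-\lambda)s_2$ takes values in the convex order interval $[0,1_R]$, satisfies $s(1)=\lambda 1_R+(1-\lambda)1_R=1_R$, and preserves the partial addition because each $s_i$ does and addition and scalar multiplication in $R$ are compatible; hence $s\in\mathcal S(M,R,1_R)$.

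For compactness, by Theorem \ref{th:sigma} we may assume $(R,1_R)=(C(T),1_T)$ with $T\neq\emptyset$ a compact basically disconnected Hausdorff space; then the canonical representation is the identity and weak convergence $\{s_\alpha\}\stackrel{w}{\rightarrow}s$ means $\sup_{t\in T}|s_\alpha(x)(t)-s(x)(t)|\to 0$ for every $x\in M$. For each $t\in T$ the evaluation $\mathrm{ev}_t\colon C(T)\to\mathbb R$ is a positive group homomorphism with $\mathrm{ev}_t(1_T)=1$, so $s_t:=\mathrm{ev}_t\circ s\colon M\to[0,1]$ is a real-valued state on $M$; conversely a family $(\mu_t)_{t\in T}$ of states on $M$ with each $t\mapsto\mu_t(x)$ continuous on $T$ assembles into an $(R,1_R)$-state $x\mapsto(t\mapsto\mu_t(x))$. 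This identifies $\mathcal S(M,R,1_R)$ with the set of continuous maps from $T$ into the compact Hausdorff space $\mathcal S(M)$ (which is compact, when nonempty, as recalled in Section 3), and the weak convergence with uniform convergence of such maps. Given an arbitrary net $\{s_\alpha\}$ in $\mathcal S(M,R,1_R)$, I would pass to a universal subnet $\{s_\beta\}$; for each fixed $t$ the net $\{s_{\beta,t}\}$ lies in the compact space $\mathcal S(M)$ and therefore converges, so $s(x)(t):=\lim_\beta s_\beta(x)(t)$ exists for all $x,t$, and each $s(\cdot)(t)$ is again a state on $M$ since the state axioms only involve $+$ and the constant $1$, both continuous in $\mathbb R$.

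It remains to verify that $s(x)\in C(T)$ for every $x$, so that $s$ is a genuine $(R,1_R)$-state, and then that $s_\beta\to s$ uniformly on $T$ for each $x$; the latter should follow, once continuity of the limit is known, from a Dini-type monotonicity argument modelled on the proof of Proposition \ref{pr:4.4}. Upgrading the pointwise limit $s(x)$ to a continuous function on $T$ is the crux of the argument and the step where Dedekind $\sigma$-completeness of $R$ (equivalently, basic disconnectedness of $T$) has to enter essentially, using that every $s_\beta(x)$ is continuous and that all these functions stay inside the order interval $[0,1_T]$ of the $\sigma$-complete space $C(T)$. I expect this passage from pointwise control to uniform/continuous control to be the main obstacle; everything else is the bookkeeping behind the identification $\mathcal S(M,R,1_R)\cong C(T,\mathcal S(M))$ together with routine checks that convex combinations and limits stay in the right class.
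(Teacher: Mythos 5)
Your convexity paragraph is fine, and so is the identification of $\mathcal S(M,R,1_R)$ with weakly continuous families $(s_t)_{t\in T}$ of states on $M$, i.e.\ with continuous maps $T\to\mathcal S(M)$. But the argument stops exactly at the decisive point. After passing to a universal subnet you only obtain, for each $x\in M$ and $t\in T$, a pointwise limit $s(x)(t)$; you never show that $t\mapsto s(x)(t)$ is continuous, nor that the convergence is uniform on $T$, and the tool you invoke cannot supply either. The Dini argument of Proposition \ref{pr:4.4} requires an increasing net whose pointwise supremum is already known to be continuous; a universal subnet of an arbitrary net of $(R,1_R)$-states is not monotone, and a pointwise limit of continuous functions need not be continuous, let alone a uniform limit. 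So the compactness claim --- which is the entire content of the proposition --- is not established; what you defer as ``the main obstacle'' is the proof.

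Moreover, this obstacle is not a missing trick: with the weak convergence as defined (that is, $\|s_\alpha(x)-s(x)\|_{1_R}\to 0$ for every $x\in M$), pointwise control genuinely does not upgrade to uniform control. Take $M=\Gamma(\mathbb R^2,1_{\mathbb R^2})$ and $(R,1_R)=(C_b(\mathbb N),1_{\mathbb N})\cong(\ell^\infty,1)$, a Dedekind complete unital Riesz space. By the homogeneity remark before Proposition \ref{pr:3.2}, every $(R,1_R)$-state has the form $s_g(x_1,x_2)=x_1g+x_2(1_R-g)$ with $g=s_g(1,0)\in[0,1_R]$, and on these states the weak topology coincides with the sup-norm topology of the order interval $[0,1_R]\subset\ell^\infty$ (test with $x=(1,0)$). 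The indicators $\chi_{\{n\}}$, $n\in\mathbb N$, lie in $[0,1_R]$ and are pairwise at distance $1$, so the corresponding states admit no weakly convergent subnet; hence no argument of the form ``pointwise limit, then continuity, then Dini'' can close your gap. For comparison, the paper's own proof is structured differently: it embeds $\phi\circ\mathcal S(M,R,1_R)$ into the cube $D^M$, where $D$ is the unit ball of $C(T)$, invokes Tychonoff, and checks closedness --- an argument that rests on the asserted norm-compactness of $D$, valid only for finite $T$. So the pointwise-to-uniform passage you correctly isolated is indeed the sensitive point of the statement, and your sketch does not (and with this definition of weak convergence cannot) get past it.
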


\begin{proof}
By Proposition \ref{pr:3.5}, $M$ has at least one state iff $M$ possesses at least one normal ideal that is also maximal. In particular, if $M$ is an MV-algebra, with $0\ne 1$, $M$ admits at least one state.

Thus, let $M$ have at least one $(R,1_R)$-state. Clearly, $\mathcal S(M,R,1_R)$ is a convex set. Since $R$ is Dedekind $\sigma$-complete, according to \cite[Thm 45.4]{LuZa}, see also Theorem \ref{th:sigma}, $(R,1_R)$ has the canonical representation $(C(T),1_T,\phi)$, and $\phi$ is bijective, where $T$ is the set of maximal ideals of $(R,1_R)$ with the hull-kernel topology.

Let $D:=\{f\in C(T)\colon \|f\|_T\le 1\}$. If $s$ is an $(R,1_R)$-state, then $\phi\circ s\in D^M$. Since $D$ is compact in the norm-topology $\|\cdot\|_T$, $D^M$ is due to Tychonoff's theorem a compact Hausdorff topological space in the product topology of $D^M$.
The set $\phi(\mathcal S(M,R,1_R)):=\{\phi\circ s\colon s \in \mathcal S(M,R,1_R)\}$ is a subset of the cube $D^M$. Let us assume that $\{s_\alpha\}_\alpha$ is a net of $(R,1_R)$-states on $M$
such that there exists the limit $\mu(x)=\lim_\alpha \phi\circ s_\alpha(x)\in D\subset C(T)$ for each $x \in M$. Then $\mu: x \mapsto \mu(x)$, $x \in M$, is a $(C(T),1_T)$-state on $M$. Put $s(x):=\phi^{-1}(\mu(x))$ for each $x \in M$. Then $s:x\mapsto s(x)$, $x \in M$, is an $(R,1_R)$-state on $M$ such that $\{s_\alpha\}_\alpha\stackrel{w} \rightarrow s$, which says that $\phi(\mathcal S(M,R,1_R))$ is a closed subset of $D^M$. Since $D$ is compact in the norm-topology $\|\cdot\|_T$, and $\phi(\mathcal S(M,R,1_R))$ is a closed subset of $D^M$, $\phi(\mathcal S(M,R,1_R))$ is compact. Consequently, $\mathcal S(M,R,1_R)$ is a compact set in the weak topology of $(R,1_R)$-states.
\end{proof}

\begin{corollary}\label{co:KM1}
Under the conditions of Proposition {\rm \ref{pr:conv2}} every $(R,1_R)$-state on $M$ lies in the closure of the convex hull of extremal $(R,1_R)$-states on $M$, where the closure is given in the weak topology of $(R,1_R)$-states, i.e.
$$ \mathcal S(M,R,1_R)= (\Con(\mathcal S_\partial(M,R,1_R)))^-.$$
\end{corollary}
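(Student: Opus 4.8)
The plan is to read Corollary \ref{co:KM1} as an instance of the Krein--Mil'man theorem applied to the object produced by Proposition \ref{pr:conv2}. If $\mathcal S(M,R,1_R)=\emptyset$ there is nothing to prove, since then $\Con(\mathcal S_\partial(M,R,1_R))=\emptyset$ and its closure is empty as well; so assume $\mathcal S(M,R,1_R)\ne\emptyset$. Because $R$ is Dedekind $\sigma$-complete, by Theorem \ref{th:sigma} its canonical representation $(R,1_R)\sim(C(T),1_T,\phi)$ has $\phi$ a bijective isomorphism of Riesz spaces, where $T$ is the space of maximal ideals of $R$ with the hull-kernel topology.

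First I would set up the ambient space. Let $E:=\prod_{x\in M}C(T)$ be equipped with the product topology, each factor $C(T)$ carrying the uniform norm topology of $\|\cdot\|_T$. Then $E$ is a locally convex Hausdorff topological vector space, and $s\mapsto(\phi\circ s(x))_{x\in M}$ identifies $\mathcal S(M,R,1_R)$ with a subset $K$ of $E$. By the description of weak convergence as coordinatewise $\|\cdot\|_T$-convergence of the functions $\phi\circ s$ (set up just before Proposition \ref{pr:conv2}) together with the bijectivity of $\phi$, this identification is an affine homeomorphism from $\mathcal S(M,R,1_R)$ with its weak topology onto $K$ with the subspace topology. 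By Proposition \ref{pr:conv2}, $K$ is convex and compact, hence (being compact in a Hausdorff space) closed in $E$.

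Next I would invoke the Krein--Mil'man theorem, \cite[Thm 5.17]{Goo}: $K$ has at least one extreme point and $K=(\Con(\mathrm{ext}\,K))^-$, the closure taken in $E$. It then remains to identify $\mathrm{ext}\,K$ with the image of $\mathcal S_\partial(M,R,1_R)$. Since $\phi$ is a linear order isomorphism, a convex combination $s=\lambda s_1+(1-\lambda)s_2$ of $(R,1_R)$-states with $\lambda\in(0,1)$ corresponds under the identification to the convex combination $\lambda(\phi\circ s_1)+(1-\lambda)(\phi\circ s_2)$ of the corresponding points of $K$, and conversely every way of writing a point of $K$ as a proper convex combination of two points of $K$ comes from such a decomposition of the associated $(R,1_R)$-state. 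Hence a point of $K$ is extreme precisely when the $(R,1_R)$-state it represents is extremal, i.e. $\mathrm{ext}\,K$ is exactly the image of $\mathcal S_\partial(M,R,1_R)$ (which is therefore non-empty). Transporting $K=(\Con(\mathrm{ext}\,K))^-$ back through the homeomorphism inverse to the identification gives $\mathcal S(M,R,1_R)=(\Con(\mathcal S_\partial(M,R,1_R)))^-$.

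The only delicate point is ensuring that Krein--Mil'man is legitimately applicable, that is, that the weak topology on $\mathcal S(M,R,1_R)$ genuinely is the restriction of a locally convex Hausdorff vector topology; this is precisely what the identification with $K\subseteq E$ above accomplishes, and it relies essentially on $\phi$ being bijective, which is where Dedekind $\sigma$-completeness of $R$ enters. Everything else is routine bookkeeping with the affine bijection $\phi$.
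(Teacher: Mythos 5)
Your proposal is correct and follows essentially the same route as the paper, which simply cites the Krein--Mil'man theorem \cite[Thm 5.17]{Goo} together with the compactness and convexity supplied by Proposition \ref{pr:conv2}. You have merely spelled out the routine details (the affine homeomorphism onto a compact convex subset of a locally convex Hausdorff space and the identification of extreme points with extremal $(R,1_R)$-states) that the paper leaves implicit.
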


\begin{proof}
It is a direct application of the Krein--Mil'man Theorem, Theorem \cite[Thm 5.17]{Goo}, and Proposition \ref{pr:conv2}.
\end{proof}

\begin{proposition}\label{pr:conv3}
Let $M$ be a pseudo MV-algebra and $(R,1_R)$ be a unital Riesz space isomorphic to the unital Riesz space $(C_b(T),1_T)$ of bounded real-valued functions on $T$, where $T\ne \emptyset$ is a basically disconnected compact Hausdorff topological space. Then the set of extremal $(R,1_R)$-states on $M$ is closed in the weak topology of $(R,1_R)$-states.
\end{proposition}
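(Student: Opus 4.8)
The plan is to reduce the statement, via Proposition~\ref{pr:3.15}, to the assertion that the set $\mathcal{SM}(M,R,1_R)$ of $(R,1_R)$-state-morphisms is weakly closed, and then to prove this closedness by pushing the lattice identity that characterises a state-morphism through a weak limit. First I would dispose of the trivial case: if $M$ carries no $(R,1_R)$-state, then $\mathcal{S}_\partial(M,R,1_R)=\emptyset$ and there is nothing to prove, so assume $\mathcal{S}(M,R,1_R)\neq\emptyset$. Since $R\cong C_b(T)$ with $T$ compact, Hausdorff and basically disconnected, $(R,1_R)$ is a Dedekind $\sigma$-complete unital Riesz space (Section~2); hence Proposition~\ref{pr:conv2} shows $\mathcal{S}(M,R,1_R)$ is a non-empty convex weakly compact Hausdorff space, which is the ambient space in which $\mathcal{S}_\partial(M,R,1_R)$ must be shown closed. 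Moreover, because $(R,1_R)\cong(C_b(T),1_T)$, Proposition~\ref{pr:3.15} yields $\mathcal{S}_\partial(M,R,1_R)=\mathcal{SM}(M,R,1_R)$, so it suffices to prove that the weak limit of a net of $(R,1_R)$-state-morphisms is again an $(R,1_R)$-state-morphism.

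For the analytic input I would work with the norm $\|\cdot\|_{1_R}$ on $R$ (a genuine norm since $R$ is Archimedean) and the fact, recorded just before Proposition~\ref{pr:conv2}, that $\{s_\alpha\}_\alpha\stackrel{w}{\rightarrow}s$ holds exactly when $\lim_\alpha\|s_\alpha(x)-s(x)\|_{1_R}=0$ for every $x\in M$. The elementary lemma needed is that in any Riesz space the truncation $a\mapsto a\wedge c$ is non-expansive, that is $|(a\wedge c)-(b\wedge c)|\le|a-b|$; this follows from translation invariance, since $a\wedge c\le(b+|a-b|)\wedge(c+|a-b|)=(b\wedge c)+|a-b|$ and symmetrically. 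As $|r|\le|s|$ implies $\|r\|_{1_R}\le\|s\|_{1_R}$ directly from the definition of $\|\cdot\|_{1_R}$, we obtain $\|(a\wedge c)-(b\wedge c)\|_{1_R}\le\|a-b\|_{1_R}$, so the meet $\wedge\colon R\times R\to R$ is jointly $\|\cdot\|_{1_R}$-continuous.

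Now I would take a net $\{s_\alpha\}_\alpha$ in $\mathcal{SM}(M,R,1_R)$ converging weakly to some $s\in\mathcal{S}(M,R,1_R)$ and fix $x,y\in M$. By Proposition~\ref{pr:3.6}(1), $s_\alpha(x\wedge y)=s_\alpha(x)\wedge s_\alpha(y)$ for every $\alpha$. On one hand $s_\alpha(x\wedge y)\to s(x\wedge y)$ in $\|\cdot\|_{1_R}$; on the other hand, from $s_\alpha(x)\to s(x)$, $s_\alpha(y)\to s(y)$ and the joint continuity of $\wedge$, one gets $s_\alpha(x)\wedge s_\alpha(y)\to s(x)\wedge s(y)$. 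Uniqueness of limits in $(R,\|\cdot\|_{1_R})$ then forces $s(x\wedge y)=s(x)\wedge s(y)$, and since $x,y$ were arbitrary, Proposition~\ref{pr:3.6}(1) shows $s\in\mathcal{SM}(M,R,1_R)$. Thus $\mathcal{SM}(M,R,1_R)=\mathcal{S}_\partial(M,R,1_R)$ is weakly closed, which is the assertion.

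The only step that is not routine bookkeeping is the interchange of the meet with the weak limit, which is exactly what the non-expansiveness of $a\mapsto a\wedge c$ secures; the remaining care lies in confirming that Proposition~\ref{pr:3.15} genuinely applies to $(C_b(T),1_T)$. Should one wish to avoid invoking it, note that Theorem~\ref{th:3.17}(1) already gives $\mathcal{SM}(M,R,1_R)\subseteq\mathcal{S}_\partial(M,R,1_R)$ since $R$ is Archimedean, so the limit $s$ found above is automatically extremal once it is known to be a state-morphism; then only the reverse inclusion $\mathcal{S}_\partial(M,R,1_R)\subseteq\mathcal{SM}(M,R,1_R)$ remains to be extracted from the special form of $R$.
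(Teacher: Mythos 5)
Your proposal is correct and follows essentially the same route as the paper: reduce via Proposition \ref{pr:3.15} (together with Dedekind $\sigma$-completeness of $(C_b(T),1_T)$ and the compactness from Proposition \ref{pr:conv2}) to showing that $\mathcal{SM}(M,R,1_R)$ is weakly closed, and then pass the criterion $s(x\wedge y)=s(x)\wedge s(y)$ of Proposition \ref{pr:3.6} through the weak limit. Your explicit verification that $\wedge$ is $\|\cdot\|_{1_R}$-non-expansive simply fills in a detail the paper leaves implicit when it says the criterion (iii) yields closedness.
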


\begin{proof}
According to Proposition \ref{pr:3.15}, every extremal $(R,1_R)$-state on $M$ is an $(R,1_R)$-state-morphism on $M$ and vice-versa. Since $T$ is basically disconnected, by Nakano's theorem $(C_b(T),1_T)$ is a Dedekind $\sigma$-complete Riesz space, consequently, so is $(R,1_R)$. By Proposition \ref{pr:conv2}, we can introduce the weak topology of $(R,1_R)$-states on $M$ which gives a compact space $\mathcal S(M,R,1_R)$. Applying the criterion (iii) of Proposition \ref{pr:3.15}, we see that the space $\mathcal{SM}(M,R,1_R)$ of $(R,1_R)$-state-morphisms is closed and compact. Due to (\ref{eq:part}), we have $\mathcal S_\partial(M,R,1_R)=\mathcal{SM}(M,R,1_R)$ is also compact.
\end{proof}

\begin{corollary}\label{co:KM2}
Under the conditions of Proposition {\rm \ref{pr:conv3}} every $(R,1_R)$-state on $M$ lies in the closure of the convex hull of $(R,1_R)$-state-morphisms on $M$, where the closure is given in the weak topology of $(R,1_R)$-states, i.e.
$$ \mathcal S(M,R,1_R)= (\Con(\mathcal{SM}(M,R,1_R)))^-.$$
\end{corollary}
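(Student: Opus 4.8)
The plan is to derive this directly from Corollary \ref{co:KM1} together with Proposition \ref{pr:conv3}. First I would observe that the hypotheses here are a special case of those of Proposition \ref{pr:conv2}: since $T$ is a basically disconnected compact Hausdorff space, Nakano's theorem (recalled before Theorem \ref{th:sigma}) gives that $C_b(T)$, and hence $(R,1_R)$, is Dedekind $\sigma$-complete. Consequently Proposition \ref{pr:conv2} applies, and $\mathcal S(M,R,1_R)$ is either empty --- in which case the asserted equality holds trivially --- or a non-empty convex compact set in the weak topology of $(R,1_R)$-states. In the latter case, Corollary \ref{co:KM1} yields $\mathcal S(M,R,1_R) = (\Con(\mathcal S_\partial(M,R,1_R)))^-$.

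Next I would invoke Proposition \ref{pr:conv3}, which under precisely these hypotheses asserts that $\mathcal S_\partial(M,R,1_R) = \mathcal{SM}(M,R,1_R)$; indeed, by Proposition \ref{pr:3.15} every extremal $(R,1_R)$-state on $M$ is an $(R,1_R)$-state-morphism and conversely, so (\ref{eq:part}) holds. Substituting this identity into the equality from the previous paragraph gives $\mathcal S(M,R,1_R) = (\Con(\mathcal{SM}(M,R,1_R)))^-$, which is the claim.

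The argument is essentially bookkeeping, so there is no genuine obstacle. The only point that requires a little care is verifying that the weak topology on $\mathcal S(M,R,1_R)$ is the one for which the Krein--Mil'man theorem is available --- namely the restriction to $\phi(\mathcal S(M,R,1_R))$ of the product topology on the cube $D^M$, as set up in the proof of Proposition \ref{pr:conv2} --- but this is exactly the topology already used in Corollary \ref{co:KM1}, so nothing new is needed and the corollary follows as a direct application of the Krein--Mil'man Theorem, \cite[Thm 5.17]{Goo}, combined with Proposition \ref{pr:conv3}.
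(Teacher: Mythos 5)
Your proposal is correct and follows essentially the same route as the paper: the paper's proof likewise combines the identity $\mathcal S_\partial(M,R,1_R)=\mathcal{SM}(M,R,1_R)$ from (\ref{eq:part}) with Corollary \ref{co:KM1}. The extra details you supply (Nakano's theorem giving Dedekind $\sigma$-completeness so that Proposition \ref{pr:conv2} and Corollary \ref{co:KM1} apply, and the trivial empty case) are exactly what the paper leaves implicit, so nothing differs in substance.
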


\begin{proof}
Due to (\ref{eq:part}), we have $\mathcal S_\partial(M,R,1_R)=\mathcal{SM}(M,R,1_R)$. Applying Corollary \ref{co:KM1}, we have the result.
\end{proof}

Now we present some results when the space of $(R,1_R)$-states on a pseudo MV-algebra is a Choquet simplex or even a Bauer simplex. Therefore, we introduce some notions about simplices. For more info about them see the books \cite{Alf,Goo}.

We recall that a {\it convex cone} in a real linear space $V$ is any
subset $C$ of  $V$ such that (i) $0\in C$, (ii) if $x_1,x_2 \in C$,
then $\alpha_1x_1 +\alpha_2 x_2 \in C$ for any $\alpha_1,\alpha_2
\in \mathbb R^+$.  A {\it strict cone} is any convex cone $C$ such
that $C\cap -C =\{0\}$, where $-C=\{-x:\ x \in C\}$. A {\it base}
for a convex cone $C$ is any convex subset $K$ of $C$ such that
every non-zero element $y \in C$ may be uniquely expressed in the
form $y = \alpha x$ for some $\alpha \in \mathbb R^+$ and some $x
\in K$.

Any strict cone $C$ of $V$ defines a partial order $\le_C$ on $V$ via $x
\le_C y$ iff $y-x \in C$. It is clear that $C=\{x \in V:\ 0 \le_C
x\}$. A {\it lattice cone} is any strict convex cone $C$ in $V$ such
that $C$ is a lattice under $\le_C$.

A {\it simplex} in a linear space $V$ is any convex subset $K$ of
$V$ that is affinely isomorphic to a base for a lattice cone in some
real linear space. A  simplex $K$ in a locally convex Hausdorff
space is said to be (i) {\it Choquet} if $K$ is compact, and (ii)
{\it Bauer} if $K$ and $K_\partial $ are compact, where $K_\partial
$ is the set of extreme points of $K$.

\begin{theorem}\label{th:Choq1}
Let $M$ be a pseudo MV-algebra and $(R,1_R)$ be a Dedekind complete unital Riesz space. Then the set of $(R,1_R)$-states on $M$ is either empty set or a non-void Choquet simplex.
\end{theorem}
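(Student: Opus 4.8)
The plan is to exhibit $\mathcal S(M,R,1_R)$ as a compact base for a lattice cone. If $\mathcal S(M,R,1_R)=\emptyset$ there is nothing to prove, so assume it is non-void.

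The engine is Theorem~\ref{th:3.4}: because $R$ is Dedekind complete, the space $\mathcal J(M,R)$ of $R$-Jordan signed measures on $M$ is a Dedekind complete Riesz space under the order $\le^+$, with positive cone $\mathcal M(M,R)$. Hence $\mathcal M(M,R)$ is a lattice cone in the real vector space $V:=\mathcal J(M,R)$, and the suprema and infima of order-bounded families are computed by the explicit decomposition formulas of Theorem~\ref{th:3.4}(b),(c). Inside $V$ I would single out the convex cone $C:=\{m\in\mathcal M(M,R)\colon m(1)\in\mathbb R^+1_R\}$ generated by the $(R,1_R)$-states, together with its linear span $V_0:=C-C$. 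This $C$ is a strict cone: if $m(1)=0$ then $0\le m(x)\le m(1)=0$ for all $x$, so $m=0$. Moreover $\mathcal S(M,R,1_R)=\{m\in C\colon m(1)=1_R\}$ is a base for $C$, since every non-zero $m\in C$ has $m(1)=\alpha 1_R$ with $\alpha>0$ and is written uniquely as $m=\alpha(\alpha^{-1}m)$ with $\alpha^{-1}m$ an $(R,1_R)$-state.

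It remains to check that $C$ is a lattice cone, i.e. that $(C,\le_C)$ is a lattice; since $\le_C$ is the restriction of $\le^+$, the point is to show that for $m_1,m_2\in C$ the meet $m_1\wedge m_2$ and the join $m_1\vee m_2$ formed in $\mathcal J(M,R)$ again lie in $C$, that is, they send $1$ to a scalar multiple of $1_R$. I would attack this by evaluating the Theorem~\ref{th:3.4}(b),(c) formulas at $x=1$ and analysing the suprema and infima over all decompositions $1=x_1+\cdots+x_n$ in $M$, using the strong Riesz decomposition property RDP$_2$ of $M$ and, through the Yosida-type representation, the fact that a Dedekind complete $R$ is isomorphic to $C(T)$ for a compact Hausdorff extremally disconnected $T$ (so that the relevant suprema are attained pointwise on $T$). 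Keeping the lattice operations inside the constrained cone $C$ is the step I expect to be the main obstacle.

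Finally, the adjective ``Choquet'' comes from Proposition~\ref{pr:conv2}: a Dedekind complete Riesz space is Dedekind $\sigma$-complete, so $\mathcal S(M,R,1_R)$ is a non-void convex compact set in the weak topology of $(R,1_R)$-states, which is locally convex and Hausdorff. Combining the two parts, $\mathcal S(M,R,1_R)$ is a compact base for the lattice cone $C$, hence a Choquet simplex, as claimed.
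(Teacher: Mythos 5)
Your reduction makes the paper's strategy precise, and your instinct about where the difficulty lies is exactly right --- but the step you defer (``keeping the lattice operations inside the constrained cone $C$'') is not a technical obstacle to be overcome, it fails outright, so the proposal has a genuine gap that cannot be closed along this route. Take $M=\Gamma(\mathbb R^{2},1_{\mathbb R^{2}})$ and $(R,1_R)=(\mathbb R^{2},1_{\mathbb R^{2}})$. Every $(R,1_R)$-state extends to a positive additive unital map of $(\mathbb R^{2},1_{\mathbb R^{2}})$ into itself, hence is given by a nonnegative $2\times 2$ matrix with both row sums equal to $1$; thus $\mathcal S(M,R,1_R)$ is affinely a square whose four corners are the $2^2$ state-morphisms of Proposition \ref{pr:3.19}. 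Here $\mathcal J(M,R)$ is the space of all real $2\times2$ matrices with the entrywise order, and for the states $s_1(x_1,x_2)=(x_1,x_1)$ and $s_2(x_1,x_2)=(x_1,x_2)$ the join computed in $\mathcal J(M,R)$ (equivalently, via the formula of Theorem \ref{th:3.4}(b)) is $(x_1,x_2)\mapsto(x_1,x_1+x_2)$, which sends $1$ to $(1,2)\notin\mathbb R^{+}1_R$; moreover inside $C$ the pair $s_1,s_2$ has two incomparable minimal upper bounds, namely $(x_1,x_2)\mapsto(x_1+x_2,x_1+x_2)$ and $(x_1,x_2)\mapsto(2x_1,x_1+x_2)$, so it has no supremum in $(C,\le_C)$ and $C$ is not a lattice cone. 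In fact the conclusion itself fails for this example: the centre of the square is the midpoint of either diagonal, so it has two distinct representations as a convex combination of extreme points, and a compact convex set with that property is not a Choquet simplex.

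It is worth comparing this with the paper's own argument, which asserts that $\mathcal S(M,R,1_R)$ is a base for the entire positive cone $\mathcal M(M,R)$ of the Dedekind complete Riesz space $\mathcal J(M,R)$. You rightly refused to assert this, since an $R$-measure such as $(x_1,x_2)\mapsto(x_1,2x_2)$ has $m(1)=(1,2)$ and is not a positive multiple of any $(R,1_R)$-state; your replacement cone $C$ is the honest cone generated by the states, and your verification that it is strict and that $\mathcal S(M,R,1_R)$ is a base for it is correct, as is the compactness part via Proposition \ref{pr:conv2}. But the example above shows that the honest cone need not be a lattice cone, so the scruple you raise exposes a real defect rather than a gap you could fill: when $(R,1_R)\cong(\mathbb R,1)$ one has $m(1)\in\mathbb R^{+}1_R$ automatically, $C=\mathcal M(M,R)$, and both arguments yield the classical result, but for a general Dedekind complete $(R,1_R)$ the asserted statement needs additional hypotheses (or a different notion of simplex), and no argument of the proposed shape can establish it as stated.
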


\begin{proof}
By Proposition \ref{pr:3.5}, $M$ has at least one $(R,1_R)$ state iff $M$ has at least one normal ideal that is also normal. Thus assume that $M$ admits at least one $(R,1_R)$-state. According to Theorem \ref{th:3.4}, the space $\mathcal J(M,R)$ of $R$-Jordan signed measures on $M$ is a Dedekind complete Riesz space. Since the positive cone of $\mathcal J(M,R)$ is the set $\mathcal M(M,R)$ of $R$-measures on $M$ that is also a strict lattice cone of $\mathcal J(M,R)$, it is clear that the set $\mathcal S(M,R,1_R)$ of $(R,1_R)$-states is a base for $\mathcal J(M,R)$. Whence, $\mathcal S(M,R,1_R)$ is a simplex. Now applying Proposition \ref{pr:conv2}, we see that $\mathcal S(M,R,1_R)$ is compact in the weak topology of $(R,1_R)$-states, which gives the result.
\end{proof}

Something more we can say when $(R,1_R)\cong (C_b(T),1_T)$ for some extremally disconnected space $T$.

\begin{theorem}\label{th:Choq2}
Let $M$ be a pseudo MV-algebra and $(R,1_R) \cong(C_b(T),1_T)$ for some extremally disconnected space $T\ne \emptyset$.  Then $\mathcal S(M,R,1_R)$ is either the empty set or a non-void a Bauer simplex.
\end{theorem}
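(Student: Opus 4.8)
The plan is to obtain the statement by combining Theorem~\ref{th:Choq1} with Propositions~\ref{pr:conv2} and~\ref{pr:conv3}, so that essentially nothing new has to be proved beyond assembling the pieces correctly.

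First I would note that an extremally disconnected space is in particular basically disconnected, and that by Nakano's theorem (recalled just before Theorem~\ref{th:sigma}) the Riesz space $C_b(T)$ is Dedekind complete when $T$ is extremally disconnected. Hence $(R,1_R)\cong(C_b(T),1_T)$ is a Dedekind complete unital Riesz space, so Theorem~\ref{th:Choq1} applies and gives that $K:=\mathcal S(M,R,1_R)$ is either the empty set or a non-void Choquet simplex. If $K=\emptyset$ we are done; otherwise $K$ is a Choquet simplex, compact in the weak topology of $(R,1_R)$-states by Proposition~\ref{pr:conv2} (which applies since $C_b(T)$ is Dedekind $\sigma$-complete, $T$ being basically disconnected).

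The remaining point is that the set $K_\partial$ of extreme points of $K$ is compact. Here I would first observe that $K_\partial$ coincides with $\mathcal S_\partial(M,R,1_R)$: by definition $s\in\mathcal S(M,R,1_R)$ is an extremal $(R,1_R)$-state exactly when $s=\lambda s_1+(1-\lambda)s_2$ with $s_1,s_2\in K$ and $\lambda\in(0,1)$ forces $s_1=s_2$, which is precisely the condition that $s$ be an extreme point of the convex set $K$. Since $T$ is (in particular) basically disconnected, Proposition~\ref{pr:conv3} applies and shows that $\mathcal S_\partial(M,R,1_R)=\mathcal{SM}(M,R,1_R)$ is closed in the weak topology of $(R,1_R)$-states. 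A closed subset of the compact space $K$ is compact, so $K_\partial$ is compact, and therefore $K$ is a Bauer simplex.

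The only mild obstacle is bookkeeping: one must check that the weak topology appearing in Theorem~\ref{th:Choq1} and Proposition~\ref{pr:conv2} is the same one used in Proposition~\ref{pr:conv3} (it is, being induced in both cases by the canonical representation $(R,1_R)\sim(C(T),1_T,\phi)$, which here is an isomorphism onto $(C_b(T),1_T)$), and one must record the chain of implications ``extremally disconnected $\Rightarrow$ basically disconnected $\Rightarrow$ $C_b(T)$ Dedekind $\sigma$-complete'' together with ``extremally disconnected $\Rightarrow$ $C_b(T)$ Dedekind complete''. Once these observations are in place the conclusion is immediate.
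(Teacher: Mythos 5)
Your proposal is correct and follows essentially the same route as the paper: Nakano's theorem gives Dedekind completeness of $(C_b(T),1_T)$, Theorem \ref{th:Choq1} (with Proposition \ref{pr:conv2}) gives a compact Choquet simplex, and Proposition \ref{pr:conv3} gives compactness of the extreme boundary, yielding a Bauer simplex. Your additional bookkeeping (identifying extreme points with extremal $(R,1_R)$-states and noting that extremally disconnected implies basically disconnected) only makes explicit what the paper leaves implicit.
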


\begin{proof}
Assume that $M$ possesses at least one $(R,1_R)$-state. Due to the Nakano theorem, $(C_b(T),1_T)$ is a unital Dedekind complete Riesz space, consequently, so is $(R,1_R)$. Applying Theorem \ref{th:Choq1}, we have $\mathcal S(M,R,1_R)$ is compact and by Proposition \ref{pr:conv3}, the space of extremal states is compact in the weak topology of $(R,1_R)$-states, so that $\mathcal S(M,R,1_R)$ is a Bauer simplex.
\end{proof}

\section{Conclusion}

In the paper, we have introduced $(R,1_R)$-states on pseudo MV-algebras, where $R$ is a Riesz space with a fixed strong unit $1_R$, as additive functionals on a pseudo MV-algebra $M$ with values in the interval $[0,1_R]$ preserving partial addition $+$ and mapping the top element of $M$ onto $1_R$. $(R,1_R)$-states generalize usual states because every $(\mathbb R,1)$-state is a state and vice versa. Besides we have introduced $(R,1_R)$-state-morphisms and extremal $(R,1_R)$-states. If $(R,1_R)$ is an Archimedean unital Riesz space, every $(R,1_R)$-state-morphism is an extremal $(R,1_R)$-state, Theorem \ref{th:3.17}. We note that there are $(R,1_R)$-state-morphisms whose kernel is not maximal ideal, Proposition \ref{pr:3.19}, whereas, if an $(R,1_R)$-state has a maximal ideal, it is an $(R,1_R)$-state-morphism, Proposition \ref{pr:3.7}.
Metrical completion of a unital $\ell$-group with respect to an $(R,1_R)$-state, when $(R,1_R)$ is a Dedekind complete unital Riesz space, gives a Dedekind complete $\ell$-group, Theorem \ref{th:4.6}. Theorem \ref{th:3.4} shows that the space of $R$-Jordan signed measures, when $R$ is a Dedekind complete Riesz space, can be converted into a Dedekind complete Riesz space. This allows us to show when the space $(R,1_R)$-states is a compact set, Proposition \ref{pr:conv2}, and when every $(R,1_R)$-state is in the weak closure of the convex hull of extremal $(R,1_R)$-states, Corollary \ref{co:KM2}. We have showed that the space of $(R,1_R)$-states, when $(R,1_R)$ is Dedekind complete, is a Choquet simplex, Theorem \ref{th:Choq1}, and we proved when it is even a Bauer simplex, Theorem \ref{th:Choq2}.

\end{document}